\numberwithin{equation}{section}
\theoremstyle{theorem}
\newtheorem{theorem}{Theorem}[section]
\newtheorem*{theorem*}{Theorem}
\newtheorem{lemma}[theorem]{Lemma}
\newtheorem{conjecture}{Conjecture}[section]
\providecommand{\customgenericname}{}
\newcommand{\newcustomtheorem}[2]{%
	\newenvironment{#1}[1]
	{%
		\renewcommand\customgenericname{#2}%
		\renewcommand\theinnercustomgeneric{##1}%
		\innercustomgeneric
	}
	{\endinnercustomgeneric}
}
\theoremstyle{definition}
\newtheorem*{definition*}{Definition}
\newtheorem*{example*}{Example}
\newtheorem*{examples*}{Examples}
\newtheorem*{remark*}{Remark}
\newtheorem*{remarks*}{Remarks}
\newtheorem*{notation*}{Notation}
\newtheorem*{note*}{Note}
\patchcmd{\section}{\scshape}{\bfseries\boldmath}{}{}
\patchcmd{\subsection}{\bfseries}{\bfseries\boldmath}{}{}
\renewcommand{\@secnumfont}{\bfseries}
\newcommand{\cZ}{\mathcal{Z}}
\newcommand{\cV}{\mathcal{V}}
\newcommand{\tcZ}{\mathcal{Z}}
\newcommand{\NZ}{N\!Z}
\newcommand{\hZ}{\widehat{Z}}
\newcommand{\hNZ}{\widehat{N\!Z}}
\newcommand{\tR}{\widetilde{R}}
\newcommand{\qbinom}[2]{{#1\brack #2}}
\newcommand{\LHS}{\operatorname{LHS}}
\newcommand{\RHS}{\operatorname{RHS}}
\newcommand{\Quot}{\operatorname{Quot}}
\newcommand{\Var}{\operatorname{Var}}
\newcommand{\Coh}{\operatorname{Coh}}
\newcommand{\card}{\operatorname{card}}
\title{Multiple Rogers--Ramanujan type identities for torus links}
\author[S. Chern]{Shane Chern}
\address{Fakult\"at f\"ur Mathematik, Universit\"at Wien, Oskar-Morgenstern-Platz 1, Wien 1090, Austria}
\email{chenxiaohang92@gmail.com, xiaohangc92@univie.ac.at}
\date{}
\keywords{Rogers--Ramanujan type identities, Quot zeta functions, motivic Cohen--Lenstra zeta functions, torus links, Hall--Littlewood polynomials.}
\subjclass[2020]{11P84, 14D23, 14H60, 05A15.}
\begin{document}
	
\sloppy
	
\maketitle

\begin{abstract}
	
	In this paper, we establish simple $k$-fold summation expressions for the Quot and motivic Cohen--Lenstra zeta functions associated with the $(2,2k)$ torus links. Such expressions lead us to some multiple Rogers--Ramanujan type identities and their finitizations, thereby confirming a conjecture of Huang and Jiang. Several other properties of the two zeta functions will be examined as well.
	
\end{abstract}

\section{Introduction}

The main objective of this paper revolves around some conjectural Rogers--Ramanujan type identities arising from algebraic geometry. To embark on our journey, we let $\mathbb{K}$ be a fixed field. Now given a certain $\mathbb{K}$-curve at a $\mathbb{K}$-point, we let $R$ be the complete local ring of its germ and $\tR$ the normalization of $R$, and assume that $E$ is a finitely generated $R$-module; this setting localizes reduced varieties $X$ over $\mathbb{K}$ and coherent sheaves $\mathcal{E}$ on $X$. We further denote by $\Quot_{E,n}$ the Quot scheme parametrizing $R$-submodules of $E$ of $\mathbb{K}$-codimension $n$. What lies at the heart of our work is the \emph{Quot zeta function}:
\begin{align*}
	Z_E^R(t) = Z_E(t) := \sum_{n\ge 0} [\Quot_{E,n}] t^n,
\end{align*}
where the motive $[V]$ denotes the class of $V$ in the Grothendieck ring $K_0(\Var_\mathbb{K})$ of $\mathbb{K}$-varieties for $V$ a $\mathbb{K}$-scheme.

Investigations on $Z_R^R(t)$ and $Z_{\tR}^R(t)$ have been widely performed in the past, and among those the beautiful Hilb-vs-Quot conjecture \cite{KT2023} predicts the connection between $Z_R^R(t)$ and $Z_{\tR}^R(t)$. What is then highlighted in a recent work of Huang and Jiang \cite{HJ2023} is a high-rank generalization in the sense that $E$ is taken to be a \emph{torsion-free module} of rank $N$ over $R$, meaning that $E$ is injective to $E \oplus_R \operatorname{Frac}(R) \simeq \operatorname{Frac}(R)^N$ with $\operatorname{Frac}(R)$ the total fraction ring of $R$.

Notably, the \emph{rationality theorem} of Huang and Jiang \cite[Theorem 1.3]{HJ2023} asserts that under the assumption that $\tR \simeq \mathbb{K}[[T]]^s$ with $s$ the branching number of $R$, we have that $Z_E^R(t)/Z_{\tR^{\oplus N}}^{\tR}(t)$ is a polynomial in $t$ for any torsion-free module $E$ of rank $N$ over $R$. Here, it is known \cite{Bif1989} that
\begin{align}\label{eq:Z-deno}
	Z_{\tR^{\oplus N}}^{\tR}(t) = \prod_{j= 0}^{N-1} \frac{1}{(1-t\mathbb{L}^j)^s},
\end{align}
where $\mathbb{L}:=[\mathbb{A}^1]$ is the \emph{Lefschetz motive}. This rationality theorem leads one to focus on the \emph{numerator part} of $Z_E^R(t)$:
\begin{align}\label{eq:NZ-def}
	\NZ_E^R(t) = \NZ_E(t) := \frac{Z_E^R(t)}{Z_{\tR^{\oplus N}}^{\tR}(t)}.
\end{align}

In addition, a generalization of the important Cohen--Lenstra zeta function \cite{CL1984} was recently introduced by Huang \cite{Hua2023} to the motivic version. Briefly speaking, by denoting $\Coh_n(R)$ the stack of $R$-modules of $\mathbb{K}$-dimension $n$, the \emph{motivic Cohen--Lenstra zeta function} is defined by
\begin{align*}
	\hZ_R(t) := \sum_{n\ge 0} [\Coh_n(R)] t^n.
\end{align*}
A remarkable result in \cite[Theorem 1.12]{HJ2023} connects the motivic Cohen--Lenstra zeta functions and the limiting case of the Quot zeta functions. To be specific, if $R$ is a complete local $\mathbb{K}$-algebra of finite type with residue field $\mathbb{K}$, then
\begin{align}\label{eq:zeta-limit}
	\hZ_R(t) = \lim_{N\to \infty} Z_{R^{\oplus N}}(t\mathbb{L}^{-N}).
\end{align}
Analogous to \eqref{eq:NZ-def}, we may also define the \emph{numerator part}:
\begin{align}
	\hNZ_R(t) := \frac{\hZ_R(t)}{\hZ_{\tR}(t)},
\end{align}
while we note from \eqref{eq:Z-deno} that
\begin{align}
	\hZ_{\tR}(t) = \prod_{j\ge 0} \frac{1}{(1-t\mathbb{L}^{-j-1})^s}.
\end{align}

The above objects have profound applications to matrix Diophantine equations when the field $\mathbb{K}$ is finite, namely, $\mathbb{K}\simeq \mathbb{F}_q$ for $q$ a prime power. As shown in \cite[p.~40, Proposition 4.3]{Hua2023}, for $R = \mathbb{F}_q[x,y]/f(x,y)$ where $f$ is a polynomial,
\begin{align*}
	\hZ_R(t) = \sum_{n\ge 0} \frac{\card \mathscr{M}_n}{\card \operatorname{GL}_n(\mathbb{F}_q)} t^n,
\end{align*}
where $\mathscr{M}_n$ is the following set of matrix pairs over $\mathbb{F}_q$:
\begin{align*}
	\mathscr{M}_{n} := \big\{(A,B)\in \operatorname{Mat}_n(\mathbb{F}_q)^2: \text{$AB=BA$ and $f(A,B)=0$}\big\}.
\end{align*}
In view of this generating series for the enumeration of \emph{commuting} matrix pairs $(A,B)$ over $\mathbb{F}_q$ satisfying the additional restriction that $f(A,B)=0$, it becomes extremely meaningful to chase nice expressions for motivic Cohen--Lenstra zeta functions.

In \cite{HJ2023}, planar singularities associated with the $(2,n)$ torus knots and links are particularly considered. That is to say, we define $R^{(2,2k+1)}$ to be the germ of the variety $y^2 = x^{2k+1}$ and $R^{(2,2k)}$ the germ of $y(y-x^k)=0$; for the latter case, if $\mathbb{K}$ is not of characteristic two, then $R^{(2,2k)}$ also admits the variety $y^2 = x^{2k}$. It is known \cite[\S{}8.2]{HJ2023} that the branching number of $R^{(2,2k+1)}$ is $1$, while for $R^{(2,2k)}$, it is $2$.

One important result in \cite{HJ2023} is the following formula for the motivic Cohen--Lenstra zeta function $\hNZ_{R^{(2,2k+1)}}(t)$ \cite[Theorem 1.13]{HJ2023}:
\begin{align}\label{eq:2k+1-sum}
	\hNZ_{R^{(2,2k+1)}}(t) = \sum_{n_1,\ldots,n_k\ge 0} \frac{t^{\sum_{i=1}^k 2n_i} \mathbb{L}^{-\sum_{i=1}^k n_i^2}}{\prod_{i=1}^k \prod_{j=1}^{n_i-n_{i-1}}(1-\mathbb{L}^{-j})},
\end{align}
where we put $n_0 := 0$. We may further specialize at $t = \pm 1$ \cite[eq.~(1.22)]{HJ2023}:
\begin{align}\label{eq:2k+1-prod}
	&\hNZ_{R^{(2,2k+1)}}(\pm 1)\notag\\
	&\qquad = \prod_{j\ge 0} \frac{(1-\mathbb{L}^{-(2k+3)j-(k+1)})(1-\mathbb{L}^{-(2k+3)j-(k+2)})(1-\mathbb{L}^{-(2k+3)j-(2k+3)})}{1-\mathbb{L}^{-j-1}}.
\end{align}

On the other hand, the expression for $\hNZ_{R^{(2,2k)}}(t)$ shown in \cite{HJ2023} is unfortunately not satisfactory, as will be seen in \eqref{eq:2k-sum-double}. However, when $t=1$, the following beautiful equality is given in \cite[Theorem 1.16]{HJ2023}.

\begin{theorem}[Huang--Jiang]
	For any positive integer $k$,
	\begin{align}\label{eq:2k-at-1}
		\hNZ_{R^{(2,2k)}}(1) = 1.
	\end{align}
\end{theorem}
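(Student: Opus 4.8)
The plan is to derive a clean $k$-fold summation formula for $\hNZ_{R^{(2,2k)}}(t)$, in the spirit of the odd case \eqref{eq:2k+1-sum}, and then to evaluate it at $t=1$. First I would replace the unwieldy double sum \eqref{eq:2k-sum-double} of Huang--Jiang by a single sum over nested indices $n_1,\dots,n_k\ge 0$ (with $n_0:=0$). Since $R^{(2,2k)}$ has branching number $s=2$, its denominator $\hZ_{\tR}(t)=\prod_{j\ge0}(1-t\mathbb{L}^{-j-1})^{-2}$ is a square, so I expect the summand to differ from the $s=1$ shape of \eqref{eq:2k+1-sum} by a correction weight; schematically the target should read
\begin{align*}
	\hNZ_{R^{(2,2k)}}(t)=\sum_{n_1,\dots,n_k\ge0}\frac{t^{\,2\sum_i n_i}\,\mathbb{L}^{-Q(\mathbf n)}\,w_{\mathbf n}(t,\mathbb{L})}{\prod_{i=1}^{k}\prod_{j=1}^{n_i-n_{i-1}}(1-\mathbb{L}^{-j})}
\end{align*}
for an explicit quadratic form $Q$ and weight $w_{\mathbf n}$. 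To obtain this I would either manipulate \eqref{eq:2k-sum-double} directly by interchanging the order of summation, or recompute $[\Coh_n(R^{(2,2k)})]$ through the Hall--Littlewood machinery and pass to the limit via \eqref{eq:zeta-limit}.

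With the formula in hand, setting $t=1$ is the crux. The hard part will be showing that the full $k$-fold sum telescopes to exactly $1$. I would attack this by induction on $k$: treat the summation over the innermost (largest) index $n_k$ as a basic hypergeometric sum, which I expect to match a terminating/nonterminating Rogers--Ramanujan or $q$-Gauss type evaluation once $t=1$ is imposed, so that performing it reduces the $k$-fold sum for parameter $k$ to the $(k-1)$-fold sum for parameter $k-1$. The base case is the simple node $y(y-x)=0$ at $k=1$ (or the trivial smooth $k=0$ case, where $R=\tR$ forces $\hNZ\equiv 1$), and it should be checkable directly from the resulting single sum.

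Equivalently, I would prove the finitized statement promised in the abstract: establish a finite $q$-binomial identity for the Quot numerator $\NZ_{R^{\oplus N}}(t)$ with $R=R^{(2,2k)}$, which is a polynomial by the rationality theorem of Huang--Jiang, and then apply \eqref{eq:zeta-limit} to pass to $\lim_{N\to\infty}\NZ_{R^{\oplus N}}(\mathbb{L}^{-N})$. The finite version tends to be cleaner because its summands vanish outside a bounded range, so the telescoping becomes a genuine finite cancellation rather than an identity of infinite products.

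The main obstacle I anticipate is the exact combinatorial cancellation at $t=1$: the extra weight $w_{\mathbf n}$ forced by $s=2$ destroys the clean Durfee-type structure that makes the odd case specialize to the nontrivial infinite product \eqref{eq:2k+1-prod}, and I would need precisely the right pairing or telescoping for every contribution beyond the empty term to cancel. Before committing to the induction I would pin down the shape of $Q$ and $w_{\mathbf n}$ by checking small $k$ and small codimension $n$ against the Huang--Jiang double sum, since an incorrect correction factor would silently wreck the collapse to $1$.
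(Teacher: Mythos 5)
Your overall strategy is in the right family, but as written it has two genuine gaps. First, your plan makes the whole proof hinge on deriving a $k$-fold formula for $\hNZ_{R^{(2,2k)}}(t)$ with an unspecified quadratic form $Q$ and weight $w_{\mathbf n}$. Such a formula does exist — it is Theorem \ref{th:Z-inf-expression}, with $Q(\mathbf n)=\sum_i n_i^2$ and the weight carried by a single factor $1/(t\mathbb{L}^{-1};\mathbb{L}^{-1})_{n_1}$ — but obtaining it for general $t$ is by far the hardest part of the paper (a semi-truncation, an induction through Heine transformations, and a separate ``old $=$ new'' transformation theorem). More importantly, it is not needed for $t=1$: the paper's primary proof of Theorem \ref{th:S-m-infty} works directly with the $2k$-fold sum \eqref{eq:2k-sum-double}. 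At $t=1$ the factor $(tq;q)_{r_1}^2$ becomes $(q;q)_{r_1}^2$ and cancels against the $(q;q)_{r_1}$ produced by opening $\qbinom{r_1}{r_1-s_1}_q$; after the change of variables $n_j=r_j-s_j$, $d_i=s_i-s_{i-1}$ the double sum decouples into two nested instances of the shifted Andrews--Gordon-type identity \eqref{eq:multi-sum-infty}, each contributing $1/(q;q)_\infty$. Your alternative of ``interchanging the order of summation'' in \eqref{eq:2k-sum-double} gestures at this, but without identifying that change of variables and the shifted identity (with general $d_1,\ldots,d_k$, not just the unshifted case) the decoupling does not happen.

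Second, your proposed induction at $t=1$ sums over the \emph{largest} index $n_k$ first. In the infinite ($N\to\infty$) setting the sum over $n_k$ given $n_{k-1}$ is $\sum_{n_k\ge n_{k-1}} q^{n_k^2}/(q;q)_{n_k-n_{k-1}}$, a nonterminating Rogers--Ramanujan-type sum with no product evaluation, so the reduction from $k$ to $k-1$ fails in that direction. The collapse must proceed from the \emph{smallest} index: the sum over $n_1$ has the shape
\begin{align*}
	\sum_{n_1\ge 0}\frac{a^{n_1}q^{n_1^2}}{(q;q)_{n_2-n_1}(q;q)_{n_1}(aq;q)_{n_1}}=\frac{1}{(q;q)_{n_2}(aq;q)_{n_2}},
\end{align*}
which is the $q$-Chu--Vandermonde seed \eqref{eq:trans-1} and reproduces the same double-Pochhammer structure at $n_2$, so the induction closes. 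This is exactly how the paper proves Theorem \ref{th:multi-sum-infty}. So the missing ingredients are concrete: the substitution $n_j=r_j-s_j$ with shifts $d_i$, the identity \eqref{eq:multi-sum-infty} in its shifted form, and the correct (bottom-up) direction of the telescoping. Your instinct to finitize first is sound and is also realized in the paper (Theorem \ref{th:S-1-finite}), but it rests on the same two ingredients.
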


Meanwhile, Huang and Jiang \cite[Conjecture 1.17]{HJ2023} also proposed a neat conjectural evaluation at $t=-1$.

\begin{conjecture}[Huang--Jiang]\label{conj:HJ-(-1)}
	For any positive integer $k$,
	\begin{align}\label{eq:2k-at-(-1)}
		\hNZ_{R^{(2,2k)}}(-1) = \prod_{j\ge 1} \frac{(1-\mathbb{L}^{-2j})(1-\mathbb{L}^{-(k+1)j})^2}{(1-\mathbb{L}^{-j})^2(1-\mathbb{L}^{-(2k+2)j})}.
	\end{align}
\end{conjecture}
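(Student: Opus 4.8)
The plan is to derive \eqref{eq:2k-at-(-1)} as the $t=-1$ specialization of a single $k$-fold summation formula for $\hNZ_{R^{(2,2k)}}(t)$, the clean replacement for the unsatisfactory double sum. Schematically, I would first establish the main summation theorem in the shape $\hNZ_{R^{(2,2k)}}(t)=\sum_{0\le n_1\le\cdots\le n_k}F_{\mathbf n}(t,\mathbb L)$, obtained exactly as in the odd case \eqref{eq:2k+1-sum} by rewriting the Hall--Littlewood/lattice-path description of the relevant Quot schemes and passing to the Cohen--Lenstra limit \eqref{eq:zeta-limit}; here the branching number $s=2$ should manifest as a doubling of the factors appearing in $F_{\mathbf n}$ relative to \eqref{eq:2k+1-sum}. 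Granting this formula, the conjecture reduces to a single $q$-series assertion about $\sum_{\mathbf n}F_{\mathbf n}(-1,\mathbb L)$.

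The second step puts the target into usable form. Writing $q=\mathbb L^{-1}$ and using $(q;q)_\infty=(q;q^2)_\infty(q^2;q^2)_\infty$, one regroups the infinite products on the right of \eqref{eq:2k-at-(-1)} and, via the Jacobi triple product in the form $\sum_{n\in\mathbb Z}(-1)^nq^{mn^2}=(q^{2m};q^{2m})_\infty(q^m;q^{2m})_\infty^2$, recognizes it as a ratio of theta functions. Setting $\Theta_m:=\sum_{n\in\mathbb Z}(-1)^nq^{mn^2}$, the conjecture becomes the theta-quotient identity
\[
\hNZ_{R^{(2,2k)}}(-1)=\frac{\Theta_{k+1}}{\Theta_1}.
\]
This reformulation is the anchor of the argument and simultaneously explains the contrast with \eqref{eq:2k-at-1}: at $t=+1$ the same mechanism should produce $\Theta_1/\Theta_1=1$, whereas the alternating signs forced by $t=-1$ survive to give the nontrivial modulus-$2(k+1)$ numerator $\Theta_{k+1}$.

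To prove the multiple identity I would use the finitizations advertised in the abstract: truncate the $k$-fold sum and establish the resulting polynomial identity by induction on $k$, driven by a telescoping recurrence in the truncation parameter (a $q$-WZ certificate, or equivalently one step of the Bailey transform along a Bailey chain, would supply such a recurrence), the infinite identity then following in the limit. In the Bailey-chain picture the $k-1$ iterations build the nested sum while the $t=-1$ specialization installs an alternating, Bressoud-type even-modulus seed of modulus $2(k+1)$, from which the Jacobi triple product releases $\Theta_{k+1}$; the extra branch $s=2$ is then expected to contribute precisely the Euler factor $(-q;q)_\infty=(q;q^2)_\infty^{-1}$ that converts Bressoud's denominator $(q;q)_\infty$ into the symmetric $\Theta_1$. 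An alternative is to match both sides of the theta-quotient identity against a Hall--Littlewood Rogers--Ramanujan identity of Warnaar type, whose products are already theta quotients.

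The main obstacle is this final identity. The difficulty is threefold: it is a genuinely $k$-fold Rogers--Ramanujan relation, so a closed evaluation needs real combinatorial input (Bailey machinery or a certified recurrence) rather than termwise manipulation; the cancellations created by the signs $(-1)^n$ are delicate, as dramatized by the total collapse to $1$ at $t=+1$; and the doubling coming from $s=2$ must be matched exactly against the squared factors $(1-\mathbb L^{-(k+1)j})^2$ and $(1-\mathbb L^{-j})^{-2}$ in \eqref{eq:2k-at-(-1)}. I expect the real work to lie in verifying the base case $k=1$ — the node $R^{(2,2)}$, where the sum is single and the quotient is $\Theta_2/\Theta_1$ — and in pinning down the recurrence that advances $k\mapsto k+1$.
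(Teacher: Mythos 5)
Your overall architecture matches the paper's: obtain a single $k$-fold summation for $\hNZ_{R^{(2,2k)}}(t)$, specialize to $t=-1$, recognize a Bressoud-type even-modulus Andrews--Gordon sum, and release the theta quotient $\Theta_{k+1}/\Theta_1$ via Jacobi's triple product. Your reformulation of the right-hand side of \eqref{eq:2k-at-(-1)} as $\Theta_{k+1}/\Theta_1$ with $q=\mathbb{L}^{-1}$ is correct, and your guess that the factor $(q^2;q^2)_{n_1}=(q;q)_{n_1}(-q;q)_{n_1}$ appearing at $t=-1$ installs the even-modulus seed is exactly what happens: the paper quotes the standard finite $\mathrm{A}_1$ identity from Warnaar's survey to write $\tcZ_k(N;-1,q)$ as $\frac{1}{(q;q)_{2N}(-q;q)_N}\sum_{n=-N}^N(-1)^nq^{(k+1)n^2}\qbinom{2N}{N-n}_q$ and then lets $N\to\infty$.

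The genuine gap is your first step. You propose to get the $k$-fold formula ``exactly as in the odd case'' from the Hall--Littlewood/lattice-path description of the Quot schemes, with the branching number $s=2$ merely doubling some factors. But that geometric route is precisely what Huang and Jiang already carried out, and it yields only the unsatisfactory $2k$-fold sum \eqref{eq:2k-sum-double} with nested Hall--Littlewood $q$-binomials in two families of indices $r_i,s_i$; no doubling of \eqref{eq:2k+1-sum} drops out of it. The entire technical content of the paper (Sections 5--8) is the purely $q$-theoretic collapse of that $2k$-fold sum to the $k$-fold sum \eqref{eq:Z-inf-expression}: it proceeds through a reindexing to an $\mathrm{A}_2$-shaped sum \eqref{eq:Z-new}, a semi-truncated auxiliary sum $\cV_k(N;t,q)$ satisfying a recurrence in $k$, an induction driven by Heine's transformations and the $q$-Chu--Vandermonde sum, a Warnaar-type lemma for relocating the $(tq;q)$ factors among the $q$-binomials, and a further transformation (Theorem \ref{th:old=new}) to reach the Andrews--Gordon shape. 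None of this is supplied or even sketched by your proposal, and you have also misplaced the difficulty: the final $k$-fold identity at $t=-1$ that you flag as the main obstacle is a citation to a known result, whereas the $2k\to k$ reduction you take for granted is where the real work lives. As written, the proposal assumes the theorem that does the heavy lifting.
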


A glimpse at the sum in \eqref{eq:2k+1-sum} and the product in \eqref{eq:2k+1-prod} readily reminds one of identities of Rogers--Ramanujan type. Before moving on to this topic, we adopt the conventional \emph{$q$-Pochhammer symbols} for $n\in\mathbb{N}\cup\{\infty\}$:
\begin{align*}
	(A;q)_n&:=\prod_{j=0}^{n-1} (1-A q^j),\\
	(A_1,A_2,\ldots,A_r;q)_n&:=(A_1;q)_n(A_2;q)_n\cdots (A_r;q)_n,
\end{align*}
and the \emph{$q$-binomial coefficients}:
\begin{align*}
	\qbinom{N}{M}_q:=\begin{cases}
		\dfrac{(q;q)_N}{(q;q)_M(q;q)_{N-M}}, & \text{if $0\le M\le N$},\\[10pt]
		0, & \text{otherwise}.
	\end{cases}
\end{align*}

The famous \emph{Rogers--Ramanujan identities} refer to the following two $q$-series equalities:
\begin{align*}
	\sum_{n\ge 0} \frac{q^{n^2}}{(q;q)_n} &= \frac{1}{(q,q^4;q^5)_\infty},\\
	\sum_{n\ge 0} \frac{q^{n^2+n}}{(q;q)_n} &= \frac{1}{(q^2,q^3;q^5)_\infty}.
\end{align*}
They were first established by Rogers \cite{Rog1894} in 1894, and had been unfortunately overlooked until Ramanujan's rediscovery \cite{Ram1914} two decades later. Around that time, a new proof was provided jointly by Ramanujan and Rogers \cite{RR1919} and another two fundamentally different proofs were offered by Schur \cite{Sch1917}. Since then, we usually refer to $q$-series relations of the form ``$\text{sum side} = \text{product side}$'' as \emph{Roger--Ramanujan type identities}. Now the equality between \eqref{eq:2k+1-sum} and \eqref{eq:2k+1-prod} is
\begin{align*}
	\sum_{n_1,\ldots,n_{k}\ge 0}\frac{q^{\sum_{i=1}^k n_i^2}}{(q;q)_{n_k-n_{k-1}}\cdots (q;q)_{n_{2}-n_1} (q;q)_{n_1}} = \frac{(q^{k+1},q^{k+2},q^{2k+3};q^{2k+3})_\infty}{(q;q)_\infty}.
\end{align*}
This identity was first discovered by Andrews \cite{And1974} in a more general form, which also serves as an analytic counterpart of a partition-theoretic relation due to Gordon \cite{Gor1961}.

To connect the motivic Cohen--Lenstra zeta functions for the $(2,2k)$ torus links with identities of Rogers--Ramanujan type, it is necessary to find a sum-like expression for $\hNZ_{R^{(2,2k)}}(t)$. Fortunately, this can be achieved by means of the \emph{Hall--Littlewood polynomials}:
\begin{align*}
	g_{\boldsymbol{s}}^{\boldsymbol{r}}(q) := q^{\sum_{i=1}^k s_i(r_i-s_i)} \prod_{i=1}^k \qbinom{r_i-s_{i-1}}{r_i-s_i}_{q^{-1}},
\end{align*}
where $\boldsymbol{r}=(r_1,\ldots,r_k)$ and $\boldsymbol{s}=(s_1,\ldots,s_k)$ are weakly \emph{increasing}\footnote{In \cite{HJ2023}, the sequences $\boldsymbol{r}$ and $\boldsymbol{s}$ are weakly \emph{decreasing} so that the top entries of the $q$-binomial coefficients are $r_i-s_{i+1}$, but for our convenience in the current work, we flip them over.} sequences of nonnegative integers, while we assume that $s_0:=0$. Then \cite[Theorem 1.14]{HJ2023} asserts that
\begin{align}\label{eq:2k-sum-double}
	\hNZ_{R^{(2,2k)}}(t)&= (t\mathbb{L}^{-1};\mathbb{L}^{-1})_\infty^2\sum_{\boldsymbol{r},\boldsymbol{s}} \frac{t^{\sum_{i=1}^k (2r_i-s_i)} \mathbb{L}^{-\sum_{i=1}^k r_i^2} g_{\boldsymbol{s}}^{\boldsymbol{r}}(\mathbb{L})}{(t\mathbb{L}^{-1};\mathbb{L}^{-1})_{r_1}^2(\mathbb{L}^{-1};\mathbb{L}^{-1})_{s_1}}\notag\\
	&\quad\times \frac{1}{(\mathbb{L}^{-1};\mathbb{L}^{-1})_{r_{k}-r_{k-1}}\cdots (\mathbb{L}^{-1};\mathbb{L}^{-1})_{r_2-r_1}}.
\end{align}
To facilitate our analysis, we define
\begin{align}
	\cZ_k(t,q)&:=\sum_{\substack{r_k\ge \cdots\ge r_1\ge 0\\s_k\ge \cdots\ge s_1\ge 0}} \frac{t^{\sum_{i=1}^k (2r_i-s_i)} q^{\sum_{i=1}^k (r_i^2-r_is_i+s_i^2)}}{(q;q)_{r_{k}-r_{k-1}}\cdots (q;q)_{r_2-r_1}(tq;q)_{r_1}^2(q;q)_{s_1}}\notag\\
	&\quad\,\times \qbinom{r_k-s_{k-1}}{r_k-s_k}_q\qbinom{r_{k-1}-s_{k-2}}{r_{k-1}-s_{k-1}}_q\cdots \qbinom{r_{2}-s_1}{r_{2}-s_{2}}_q\qbinom{r_1}{r_1-s_1}_q.
\end{align}
It is then clear that
\begin{align}
	\hNZ_{R^{(2,2k)}}(t)|_{\mathbb{L}\mapsto q^{-1}} = (tq;q)_\infty^2 \cZ_k(t,q).
\end{align}

Note that the proof of \eqref{eq:2k-at-1} in \cite{HJ2023} relies heavily on hardcore techniques in algebraic geometry. Recently, in a private communication with Yifeng Huang, one of the authors of \cite{HJ2023}, a purely $q$-theoretic proof of \eqref{eq:2k-at-1} was requested. This is the starting point of our work.

\begin{theorem}\label{th:S-m-infty}
	For any positive integer $k$,
	\begin{align}\label{eq:S-m-infty}
		\cZ_k(1,q) = \frac{1}{(q;q)_\infty^2}.
	\end{align}
	Consequently, \eqref{eq:2k-at-1} is true.
\end{theorem}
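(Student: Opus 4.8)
The final assertion has two parts, and the second follows instantly from the first: once we know $\cZ_k(1,q) = 1/(q;q)_\infty^2$, the relation $\hNZ_{R^{(2,2k)}}(t)|_{\mathbb{L}\mapsto q^{-1}} = (tq;q)_\infty^2\,\cZ_k(t,q)$ gives at $t=1$ that $\hNZ_{R^{(2,2k)}}(1)|_{\mathbb{L}\mapsto q^{-1}} = (q;q)_\infty^2\cdot(q;q)_\infty^{-2} = 1$, which is exactly \eqref{eq:2k-at-1}. So the whole problem reduces to the $q$-series identity \eqref{eq:S-m-infty}, and the plan is to prove it by \emph{induction on $k$}, showing that $\cZ_k(1,q) = \cZ_{k-1}(1,q)$ for every $k\ge 2$ and computing the base case $\cZ_1(1,q)$ directly.

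The engine of both the inductive step and the base case will be a single finite summation lemma: for all integers $P\ge c\ge 0$,
\begin{align*}
	\sum_{u\ge 0}\frac{q^{u^2+uc}}{(q;q)_u\,(q;q)_{u+c}\,(q;q)_{P-c-u}} = \frac{1}{(q;q)_{P-c}\,(q;q)_{P}},
\end{align*}
equivalently (writing $n=P-c$) $\sum_{u=0}^{n}\qbinom{n}{u}_q \dfrac{q^{u^2+uc}}{(q;q)_{u+c}} = \dfrac{1}{(q;q)_{n+c}}$. This is a finite Durfee-rectangle identity, and I would establish it by induction on $n$ using the $q$-Pascal recurrence $\qbinom{n}{u}_q = \qbinom{n-1}{u}_q + q^{n-u}\qbinom{n-1}{u-1}_q$ (the base case $n=0$ being trivial). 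Letting $P\to\infty$ recovers the classical Durfee-rectangle identity $\sum_u q^{u^2+uc}/\big((q;q)_u(q;q)_{u+c}\big) = 1/(q;q)_\infty$, which already handles the base case: in $\cZ_1(1,q)$ the factor $q^{s_1^2}/(q;q)_{s_1}^2$ splits off, summing over $r_1$ gives $1/(q;q)_\infty$, and summing over $s_1$ via the Durfee square gives $\cZ_1(1,q) = (q;q)_\infty^{-1}\sum_{s_1} q^{s_1^2}/(q;q)_{s_1}^2 = (q;q)_\infty^{-2}$.

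For the inductive step I would first fully expand every $q$-binomial in $\cZ_k(1,q)$, turning the summand into a ratio of $q$-Pochhammer symbols, and then peel off the \emph{innermost} layer by summing over $r_1$ and $s_1$ with all other variables frozen. The factors carrying $r_1$ or $s_1$ assemble into a double sum $T(r_2,s_2)$ depending on the outer variables only through $r_2$ and $s_2$; carrying out the $r_1$-summation by the lemma produces the factor $1/\big((q;q)_{r_2-s_1}(q;q)_{r_2}\big)$, whose first piece \emph{cancels exactly} the numerator $(q;q)_{r_2-s_1}$ coming from the binomial $\qbinom{r_2-s_1}{r_2-s_2}_q$, and a second application of the lemma to the $s_1$-summation then collapses everything to $T(r_2,s_2) = 1/\big((q;q)_{r_2}(q;q)_{s_2}^2\big)$. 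Substituting this back and relabeling $(r_2,\dots,r_k,s_2,\dots,s_k)\mapsto(r_1,\dots,r_{k-1},s_1,\dots,s_{k-1})$ reproduces precisely the expanded summand of $\cZ_{k-1}(1,q)$, giving $\cZ_k(1,q) = \cZ_{k-1}(1,q)$ and closing the induction.

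The crux --- and the step I expect to be the real obstacle --- is the coupling between the two monotone sequences $\boldsymbol{r}$ and $\boldsymbol{s}$ through the ``mixed'' binomial tops $r_i - s_{i-1}$: a naive summation over any single outer variable does not close (for instance, summing over $r_k$ alone leads to an intractable Rogers--Ramanujan-type partial sum with no product evaluation), so the entire argument hinges on choosing to peel the innermost layer --- where the special squared factor $(q;q)_{r_1}^2$ produced at $t=1$ lives --- and on the precise cancellation of $(q;q)_{r_2-s_1}$ described above. Verifying that this cancellation persists for general $k$, i.e. that the layer-$1$ sum $T(r_2,s_2)$ depends on the outer data exactly as needed to regenerate $\cZ_{k-1}(1,q)$, is where the bookkeeping must be done with care; but once the finite lemma is in hand, the reduction is forced.
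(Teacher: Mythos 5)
Your argument is correct, and it reaches \eqref{eq:S-m-infty} by a genuinely different route from the paper, although the two proofs ultimately rest on the same one-variable summation: your finite Durfee-rectangle lemma is precisely the $a=1$ case of the paper's iteration seed \eqref{eq:trans-1} (take $M=c$, $N=P-c$ there), and your $q$-Pascal induction is a valid alternative to the paper's derivation of that seed from the first $q$-Chu--Vandermonde sum. Where you diverge is in how the lemma is deployed. The paper changes variables globally ($n_j=r_j-s_j$, $d_i=s_i-s_{i-1}$), which decouples the $2k$-fold sum into an inner $k$-fold sum over the $n_i$ evaluated in one stroke by the multiple identity \eqref{eq:multi-sum-infty} with $a=1$, followed by the same identity with all $d_i=0$ for the outer sum over the $s_i$. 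You instead induct on $k$, peeling off the innermost pair $(r_1,s_1)$: the $r_1$-sum produces $1/\big((q;q)_{r_2-s_1}(q;q)_{r_2}\big)$, the factor $(q;q)_{r_2-s_1}$ cancels the top of $\qbinom{r_2-s_1}{r_2-s_2}_q$, and the $s_1$-sum then yields $1/\big((q;q)_{r_2}(q;q)_{s_2}^2\big)$, which is exactly the $(r_1,s_1)$-profile of $\cZ_{k-1}(1,q)$ after relabelling --- I have checked this bookkeeping and it closes as you claim, with the base case handled by the $P\to\infty$ limit. What the paper's organization buys is the standalone parametric identity of Theorem \ref{th:multi-sum-infty}, whose free parameters $a$ and $d_1,\ldots,d_k$ (and whose finite form \eqref{eq:multi-sum-N}) are reused elsewhere, e.g.\ for the finitization \eqref{eq:S-1-finite}; your telescoping argument is more self-contained and avoids stating the multi-fold identity, but it is tailored to the $t=1$ specialization where the factor $(tq;q)_{r_1}^2$ degenerates to $(q;q)_{r_1}^2$, and it would need the extra parameter $a$ restored to recover the paper's more general statements.
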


We will show that this relation is indeed a consequence of the following multiple Rogers--Ramanujan type identity.

\begin{theorem}\label{th:multi-sum-infty}
	For any nonnegative integers $d_1,\ldots,d_k$,
	\begin{align}\label{eq:multi-sum-infty}
		\frac{1}{(aq;q)_\infty} &= \sum_{n_1,\ldots,n_k\ge 0} \frac{a^{\sum_{i=1}^k n_i}q^{\sum_{i=1}^k n_i^2+\sum_{i=1}^k(d_1+\cdots+d_i)n_i}}{(q;q)_{n_k-n_{k-1}+d_k} \cdots (q;q)_{n_2-n_1+d_2}}\notag\\
		&\quad\times \frac{(q;q)_{n_k+d_k}\cdots (q;q)_{n_2+d_2}}{ (q;q)_{n_k} \cdots (q;q)_{n_1} (aq;q)_{n_1+d_1}}.
	\end{align}
\end{theorem}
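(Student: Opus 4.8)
The plan is to prove \eqref{eq:multi-sum-infty} by induction on $k$, peeling off the \emph{innermost} summation variable $n_1$. Write $F_k(a;d_1,\dots,d_k)$ for the right-hand side of \eqref{eq:multi-sum-infty}. The first step is to isolate every factor carrying an $n_1$: apart from the ``building block'' $a^{n_1}q^{n_1^2+d_1n_1}/\big((q;q)_{n_1}(aq;q)_{n_1+d_1}\big)$, the \emph{only} coupling of $n_1$ to the rest of the sum is the single reciprocal $(q;q)_{n_2-n_1+d_2}^{-1}$ (the numerator factors $(q;q)_{n_i+d_i}$ all have $i\ge 2$). Hence, holding $n_2,\dots,n_k$ fixed, one must evaluate
\[
\sum_{n_1\ge 0}\frac{a^{n_1}q^{n_1^2+d_1n_1}}{(q;q)_{n_1}(aq;q)_{n_1+d_1}(q;q)_{n_2-n_1+d_2}},
\]
which terminates at $n_1=n_2+d_2$ under the convention $1/(q;q)_m=0$ for $m<0$.

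The heart of the argument is to recognize this finite sum as a terminating ${}_1\phi_1$. Setting $N:=n_2+d_2$ and using $(q;q)_{n_1}^{-1}(q;q)_{N-n_1}^{-1}=(q;q)_N^{-1}\qbinom{N}{n_1}_q$ together with the reversal $\qbinom{N}{n_1}_q=\frac{(q^{-N};q)_{n_1}}{(q;q)_{n_1}}(-1)^{n_1}q^{Nn_1-\binom{n_1}{2}}$ and the splitting $q^{n_1^2}=q^{\binom{n_1}{2}}q^{\binom{n_1+1}{2}}$, the inner sum rewrites as $\frac{1}{(aq;q)_{d_1}(q;q)_N}\,{}_1\phi_1\big(q^{-N};aq^{d_1+1};q,aq^{N+d_1+1}\big)$. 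The crucial coincidence is that, with $b=q^{-N}$ and $c=aq^{d_1+1}$, the argument equals exactly $c/b=aq^{N+d_1+1}$, so the classical summation $\,{}_1\phi_1(b;c;q,c/b)=\frac{(c/b;q)_\infty}{(c;q)_\infty}$ applies and, after the telescoping $(aq;q)_{d_1}(aq^{d_1+1};q)_\infty=(aq;q)_\infty$, collapses the inner sum to the strikingly simple closed form $\dfrac{1}{(q;q)_{n_2+d_2}(aq;q)_{n_2+d_1+d_2}}$.

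Substituting this back is where the induction closes: the factor $(q;q)_{n_2+d_2}$ accompanying $n_2$ in the numerator cancels precisely, and the entire $n_2$-block reduces to $\dfrac{a^{n_2}q^{n_2^2+(d_1+d_2)n_2}}{(q;q)_{n_2}(aq;q)_{n_2+(d_1+d_2)}}$, which is exactly the level-$1$ building block with $d_1$ replaced by $d_1+d_2$. Every remaining factor matches the $(k-1)$-fold sum verbatim, since the merge $d_1,d_2\mapsto d_1+d_2$ preserves all cumulative exponents $d_1+\cdots+d_i$. Thus $F_k(a;d_1,\dots,d_k)=F_{k-1}(a;d_1+d_2,d_3,\dots,d_k)$, and iterating reduces everything to the single sum $F_1(a;d_1+\cdots+d_k)$, namely the Durfee-rectangle (Cauchy) identity $\sum_{n\ge0}\frac{a^nq^{n^2+Dn}}{(q;q)_n(aq;q)_{n+D}}=\frac{1}{(aq;q)_\infty}$; alternatively this base case drops out of the same lemma by letting $n_2\to\infty$. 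I expect the main obstacle to be purely the bookkeeping of the second step: massaging the inner sum into the exact ${}_1\phi_1$ shape so that its argument lands precisely on $c/b$ (the quadratic $q^{n_1^2}$ must be split correctly against the $q$-binomial reversal), and then verifying that the shift-merge leaves every surviving $q$-exponent and Pochhammer subscript invariant. Once these routine verifications are in place, the induction is immediate and the independence of the answer from all $d_i$ becomes transparent.
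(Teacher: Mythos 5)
Your proposal is correct and follows essentially the same route as the paper: your inner-sum evaluation is precisely the paper's iteration seed \eqref{eq:trans-1} (with $M=d_1$, $N=n_2+d_2$), which the paper proves via the first $q$-Chu--Vandermonde sum rather than the equivalent ${}_1\phi_1(b;c;q,c/b)$ summation. The only cosmetic difference is direction: the paper iterates the seed outward to build the finite identity \eqref{eq:multi-sum-N} and then lets $N\to\infty$, while you collapse the innermost sum and merge $d_1,d_2\mapsto d_1+d_2$, which is the same telescoping read backwards.
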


Notably, letting $d_1=\cdots=d_k=0$, the above becomes
\begin{align}\label{eq:multi-sum-infty-0}
	\frac{1}{(aq;q)_\infty} &= \sum_{n_1,\ldots,n_k\ge 0} \frac{a^{\sum_{i=1}^k n_i}q^{\sum_{i=1}^k n_i^2}}{(q;q)_{n_k-n_{k-1}} \cdots (q;q)_{n_2-n_1} (q;q)_{n_1}(aq;q)_{n_1}}.
\end{align}
This is an instance of \cite[p.~30, eq.~(3.44)]{And1986} by choosing the following \emph{Bailey pair} relative to $(a,q)$ \cite[p.~25, eq.~(3.27)]{And1986}:
\begin{align*}
	\alpha_n = \begin{cases}
		1, & n=0,\\
		0, & n\ge 1,
	\end{cases}\qquad\text{and}\qquad \beta_n = \frac{1}{(q;q)_n(aq;q)_n}.
\end{align*}

One may wonder if the same method works for the evaluation of $\hNZ_{R^{(2,2k)}}(t)$ at $t=-1$ so as to attack Conjecture \ref{conj:HJ-(-1)}. Sadly, this is not the case. Now a natural idea is to figure out a simpler expression for $\hNZ_{R^{(2,2k)}}(t)$, by reducing the number of summation folds from $2k$ to $k$, thereby yielding an analog to the case of $(2,2k+1)$ torus knots in \eqref{eq:2k+1-sum}.

\begin{theorem}\label{th:Z-inf-expression}
	For any positive integer $k$,
	\begin{align}\label{eq:Z-inf-expression}
		\cZ_k(t,q) &= \frac{1}{(tq;q)_\infty} \sum_{n_1,\ldots,n_k\ge 0} \frac{t^{\sum_{i=1}^k 2n_i} q^{\sum_{i=1}^k n_i^2}}{(q;q)_{n_k-n_{k-1}}\cdots (q;q)_{n_2-n_1}(q;q)_{n_1}(tq;q)_{n_1}}.
	\end{align}
	Consequently,
	\begin{align}\label{eq:hNZ-2k}
		\hNZ_{R^{(2,2k)}}(t)&= (t\mathbb{L}^{-1};\mathbb{L}^{-1})_\infty \sum_{n_1,\ldots,n_k\ge 0} \frac{t^{\sum_{i=1}^k 2n_i} \mathbb{L}^{-\sum_{i=1}^k n_i^2}}{(t\mathbb{L}^{-1};\mathbb{L}^{-1})_{n_1}}\notag\\
		&\quad\times \frac{1}{(\mathbb{L}^{-1};\mathbb{L}^{-1})_{n_k-n_{k-1}}\cdots (\mathbb{L}^{-1};\mathbb{L}^{-1})_{n_2-n_1}(\mathbb{L}^{-1};\mathbb{L}^{-1})_{n_1}}.
	\end{align}
\end{theorem}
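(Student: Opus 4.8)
The plan is to prove the first identity \eqref{eq:Z-inf-expression}, since \eqref{eq:hNZ-2k} is then immediate: writing $S_k(t,q)$ for the $k$-fold sum on the right of \eqref{eq:Z-inf-expression}, one substitutes $\mathbb{L}\mapsto q^{-1}$ in \eqref{eq:hNZ-2k} and uses $\hNZ_{R^{(2,2k)}}(t)|_{\mathbb{L}\mapsto q^{-1}} = (tq;q)_\infty^2\,\cZ_k(t,q)$; this turns \eqref{eq:hNZ-2k} into $(tq;q)_\infty^2\,\cZ_k = (tq;q)_\infty\,S_k$, which is exactly \eqref{eq:Z-inf-expression}. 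So the whole content is the passage from the $2k$-fold sum defining $\cZ_k$ to $\tfrac{1}{(tq;q)_\infty}S_k$; heuristically one must integrate out the variables $s_1,\dots,s_k$ and have them conjure the single infinite product $\tfrac{1}{(tq;q)_\infty}$.

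The engine I would use is Theorem \ref{th:multi-sum-infty}, whose free parameters $d_1,\dots,d_k$ are precisely what is needed to couple an auxiliary summation to the indices of $S_k$. For each summand of $S_k$ indexed by $n_1\le\cdots\le n_k$, I would expand the prefactor via \eqref{eq:multi-sum-infty} with $a=t$ and the admissible choice $d_i=n_i-n_{i-1}\ge 0$:
\begin{align*}
\frac{1}{(tq;q)_\infty} = \sum_{m_1,\dots,m_k\ge 0} \frac{t^{\sum m_i} q^{\sum m_i^2 + \sum_i(d_1+\cdots+d_i)m_i}}{(q;q)_{m_k-m_{k-1}+d_k}\cdots(q;q)_{m_2-m_1+d_2}}\cdot\frac{(q;q)_{m_k+d_k}\cdots(q;q)_{m_2+d_2}}{(q;q)_{m_k}\cdots(q;q)_{m_1}(tq;q)_{m_1+d_1}}.
\end{align*}
The decisive bookkeeping step is then the change of variables $r_i=m_i+n_i$: under it the chain factors collapse to $(q;q)_{r_i-r_{i-1}}$, the factor $(tq;q)_{m_1+d_1}$ becomes $(tq;q)_{r_1}$, and — the point that makes the scheme plausible — the two quadratic exponents combine as $\sum n_i^2 + \sum(m_i^2+n_i m_i) = \sum(r_i^2 - r_i n_i + n_i^2)$, which is exactly the $q$-exponent of $\cZ_k$ under the identification $n_i\leftrightarrow s_i$. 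Thus both $\cZ_k$ and the expanded target become $2k$-fold sums over the same index set, carrying the same $q$-quadratic form and the same chain of Pochhammer symbols.

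The hard part will be reconciling what is left over, and I expect this to be where the genuine work lies rather than in the alignment above. Matching the remaining Pochhammer factors and, especially, the powers of $t$ does \emph{not} happen summand-by-summand: a direct comparison (already transparent at $k=1$) shows that the two families of summands differ by a factor of the shape $t^{\sum(r_i-2s_i)}(q;q)_{r_1}(tq;q)_{s_1}\big/\big[(q;q)_{s_1}(tq;q)_{r_1}\big]$, so the identity can hold only after a true rearrangement of the double sum. I would therefore carry out the argument at the level of a finitization: introduce a bounding parameter, replace $\tfrac{1}{(tq;q)_\infty}$ and the outer summations by their truncations so that \eqref{eq:Z-inf-expression} becomes a polynomial identity, prove that identity by induction on $k$ — peeling off the pair $(r_k,s_k)$ and summing the innermost $q$-binomial by a $q$-Chu--Vandermonde evaluation, the accumulated shifts being absorbed exactly by the free parameters $d_i$ of Theorem \ref{th:multi-sum-infty} — and finally let the bound tend to infinity. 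Engineering a finitization whose inductive step closes and whose limit returns \eqref{eq:Z-inf-expression} is the crux; the asymmetry between the two factors $(tq;q)_{r_1}$ in $\cZ_k$ and the single pair $(q;q)_{n_1}(tq;q)_{n_1}$ together with the prefactor $\tfrac{1}{(tq;q)_\infty}$ in the target is precisely where that finitization must be set up most carefully.
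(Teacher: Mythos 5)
There is a genuine gap: what you have written is a plan whose decisive step is explicitly deferred, and the deferred step is the entire content of the theorem. Your alignment is correct as far as it goes --- expanding $1/(tq;q)_\infty$ via Theorem \ref{th:multi-sum-infty} with $a=t$, $d_i=n_i-n_{i-1}$, and setting $r_i=m_i+n_i$ does reproduce the quadratic form $\sum(r_i^2-r_is_i+s_i^2)$ and the chain $(q;q)_{r_i-r_{i-1}}$ --- and you correctly diagnose that the summands then disagree by $t^{\sum(r_i-2s_i)}(q;q)_{r_1}(tq;q)_{s_1}/[(q;q)_{s_1}(tq;q)_{r_1}]$. But you then assert that an induction "peeling off $(r_k,s_k)$ and summing the innermost $q$-binomial by a $q$-Chu--Vandermonde evaluation" will absorb this discrepancy, without exhibiting the finitization or the inductive step. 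This is precisely the obstruction the paper flags: the Theorem \ref{th:multi-sum-infty} mechanism works at $t=1$ exactly because the discrepancy factor collapses to $1$ there, and the author states that the same method does not extend beyond $t=1$. Nothing in your sketch explains how $q$-Chu--Vandermonde alone would repair a mismatch in the powers of $t$ and in the $(tq;q)_{r_1}$ versus $(q;q)_{r_1}$ Pochhammer symbols that survives in every individual summand.

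For comparison, the paper's route to \eqref{eq:Z-inf-expression} is the $N\to\infty$ limit (via \eqref{eq:Z-finite-limit}) of the finitization Theorem \ref{th:Z-expression}, and that finitization consumes four sections of machinery none of which appears in your proposal: a ${}_3\phi_2$ transformation \eqref{eq:trans-2} to trade the factor $(q;q)_{n_1+s_1}/(tq;q)_{n_1+s_1}^2$ for $(t;q)_{n_1}^2/(t^2q;q)_{N+n_k}$; an extension of Warnaar's lemma to reverse the direction of one chain of $q$-binomial coefficients; an induction on $k$ computing the auxiliary semi-truncated sum $\cV_k(N;t,q)$ through Heine's transformations; and Lemma \ref{le:Z-1=Z-2-full}, applied repeatedly with different specializations of $(a,b,L,M)$, to convert the resulting alternating $k$-fold sum into the positive-term sum of \eqref{eq:Z-expression}. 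Your proposal would be salvageable only if you actually constructed the finitized identity and verified that its inductive step closes; as written, the crux is named but not crossed.
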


Now the evaluation at $t=-1$ becomes immediate.

\begin{theorem}\label{th:S-m-(-1)-infty}
	For any positive integer $k$,
	\begin{align}\label{eq:S-m-(-1)-infty}
		\cZ_k(-1,q) = \frac{(q^{k+1};q^{k+1})_\infty^2}{(q^2;q^2)_\infty (q^{2k+2};q^{2k+2})_\infty}.
	\end{align}
	Consequently, \eqref{eq:2k-at-(-1)} in Huang--Jiang's Conjecture \ref{conj:HJ-(-1)} is true.
\end{theorem}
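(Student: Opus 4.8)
The plan is to specialize the single-fold expression \eqref{eq:Z-inf-expression} at $t=-1$ and to recognize the surviving sum as a classical even-modulus Rogers--Ramanujan--Gordon identity. Putting $t=-1$ produces two simplifications at once: every sign collapses since $t^{\sum_{i=1}^k 2n_i}=(-1)^{2\sum_i n_i}=1$, and the two Pochhammer factors carried by the innermost index merge through $(q;q)_{n_1}(-q;q)_{n_1}=(q^2;q^2)_{n_1}$. Thus
\[
\cZ_k(-1,q)=\frac{1}{(-q;q)_\infty}\sum_{n_k\ge\cdots\ge n_1\ge 0}\frac{q^{\sum_{i=1}^k n_i^2}}{(q;q)_{n_k-n_{k-1}}\cdots (q;q)_{n_2-n_1}(q^2;q^2)_{n_1}}.
\]

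The remaining multiple sum, after reversing the order of the summation indices, is precisely the shape of Bressoud's even-modulus analogue of the Andrews--Gordon identities, in which it is the smallest index that carries the base-$q^2$ factor. I would invoke its linear-term-free instance --- modulus $2k+2$, distinguished index $i=k+1$, so that no linear exponent appears --- to evaluate the sum as $(q^{k+1};q^{2k+2})_\infty^2(q^{2k+2};q^{2k+2})_\infty/(q;q)_\infty$.

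It then remains only to tidy the infinite products. Using $(-q;q)_\infty=(q^2;q^2)_\infty/(q;q)_\infty$ together with the parity splitting $(q^{k+1};q^{k+1})_\infty=(q^{k+1};q^{2k+2})_\infty(q^{2k+2};q^{2k+2})_\infty$, one simplifies $\cZ_k(-1,q)$ to the right-hand side of \eqref{eq:S-m-(-1)-infty}. Finally, feeding this into $\hNZ_{R^{(2,2k)}}(t)|_{\mathbb{L}\mapsto q^{-1}}=(tq;q)_\infty^2\cZ_k(t,q)$ at $t=-1$ and simplifying via $(-q;q)_\infty^2=(q^2;q^2)_\infty^2/(q;q)_\infty^2$ reproduces exactly the product in \eqref{eq:2k-at-(-1)} after $\mathbb{L}\mapsto q^{-1}$, thereby confirming Conjecture \ref{conj:HJ-(-1)}.

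The argument carries no analytic difficulty once \eqref{eq:Z-inf-expression} is available; the only point demanding care is the bookkeeping that matches our sum to Bressoud's normalization --- pinning down the modulus as $2k+2$, the distinguished index as $i=k+1$, and verifying that the base-$q^2$ factor sits on the innermost variable $n_1$. Should one wish to avoid citing Bressoud as a black box, the identical inner sum can instead be produced by iterating the Bailey-chain mechanism behind Theorem \ref{th:multi-sum-infty} against the even-modulus Bailey pair, but the direct appeal is by far the most economical route.
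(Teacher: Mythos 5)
Your proposal is correct and follows essentially the same route as the paper: specialize the $k$-fold sum of Theorem \ref{th:Z-inf-expression} at $t=-1$, merge $(q;q)_{n_1}(-q;q)_{n_1}=(q^2;q^2)_{n_1}$, and evaluate the resulting even-modulus Andrews--Gordon--Bressoud sum of modulus $2k+2$ with vanishing linear terms. The only cosmetic difference is that the paper works at the finite level via \eqref{eq:Z-expression}, cites the polynomial $\mathrm{A}_1$ identity from Warnaar's survey, and recovers the product through Jacobi's triple product as $N\to\infty$ (which also yields the finitization \eqref{eq:Z-(-1)-expression} and Theorem \ref{th:strange-A2-sum} as byproducts), whereas you invoke Bressoud's infinite identity directly; the two classical inputs are equivalent.
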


It is remarkable that as a middle step in our proof of Theorems \ref{th:Z-inf-expression} and \ref{th:S-m-(-1)-infty}, we observe that \eqref{eq:S-m-(-1)-infty} is closely tied with a more surprising identity.

\begin{theorem}\label{th:strange-A2-sum}
	For any positive integer $k$,
	\begin{align}\label{eq:strange-A2-sum}
		&\frac{(q^2;q^2)_\infty (q^{k+1};q^{k+1})_\infty^2}{(q;q)_\infty^3 (q^{2k+2};q^{2k+2})_\infty}\notag\\
		&\qquad\qquad = \sum_{\substack{m_1,\ldots,m_k\ge 0\\n_1,\ldots,n_k\ge 0}} \frac{(-1)^{\sum_{i=1}^k m_i} q^{-n_1^2+n_1+\sum_{i=1}^k (m_i^2+m_in_i+n_i^2)} (-1;q)_{n_1}^2}{ (q;q)_{m_k}(q;q)_{m_1} (q;q)_{n_1}}\notag\\
		&\qquad\qquad\quad\times \qbinom{m_k}{m_{k-1}}_q \qbinom{m_{k-1}}{m_{k-2}}_q \cdots \qbinom{m_2}{m_1}_q \qbinom{n_1}{n_2}_q \cdots \qbinom{n_{k-2}}{n_{k-1}}_q \qbinom{n_{k-1}}{n_k}_q.
	\end{align}
\end{theorem}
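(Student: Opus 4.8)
The plan is to prove \eqref{eq:strange-A2-sum} by reading its right-hand side as the output of iterating an $A_2$-type Bailey transform from a single seed, the telltale sign being the $A_2$ quadratic form $m_i^2+m_in_i+n_i^2$ coupling the two $q$-binomial chains. Two structural observations organize everything. First, the exponent collapses: since $-n_1^2+\sum_{i=1}^k n_i^2=\sum_{i=2}^k n_i^2$, the $n_1^2$ term disappears, so the summand splits into a \emph{degenerate seed level} $i=1$ carrying $\tfrac{(-1)^{m_1}q^{m_1^2+m_1n_1+n_1}(-1;q)_{n_1}^2}{(q;q)_{m_1}(q;q)_{n_1}}$ and $k-1$ \emph{iteration levels} $i=2,\dots,k$. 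Second, the binomials $\qbinom{m_i}{m_{i-1}}_q$ force an increasing chain $0\le m_1\le\cdots\le m_k$ while $\qbinom{n_{i-1}}{n_i}_q$ forces a decreasing chain $n_1\ge\cdots\ge n_k\ge 0$, and each iteration level $i$ contributes exactly one full $A_2$ quadratic form $m_i^2+m_in_i+n_i^2$, one sign $(-1)^{m_i}$, and one linking pair $\qbinom{m_i}{m_{i-1}}_q\qbinom{n_{i-1}}{n_i}_q$ tying the two chains together. This is precisely the shape produced by $k-1$ applications of an $A_2$ Bailey lemma in the spirit of Andrews--Schilling--Warnaar.

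First I would pin down the seed: the fingerprints $(-1;q)_{n_1}^2$ together with the absence of an $n_1^2$ term should fix an explicit $A_2$ Bailey pair relative to the appropriate parameter, whose $\beta$-side reproduces the $i=1$ factor above. Next I would apply the $A_2$ Bailey transform $k-1$ times, verifying at each step that it appends exactly one factor $(-1)^{m_i}q^{m_i^2+m_in_i+n_i^2}$ and the linking pair $\qbinom{m_i}{m_{i-1}}_q\qbinom{n_{i-1}}{n_i}_q$, thereby assembling the entire multisum on the right of \eqref{eq:strange-A2-sum}. In the terminal step the iterated sum pairs against the $\alpha$-side of the final Bailey pair, and the infinite product should emerge after evaluating the resulting theta-type series; I expect to reach a (bilateral) sum that factors via the Jacobi triple product applied twice, equivalently the specialized Macdonald / Weyl--Kac denominator identity for $A_2^{(1)}$, yielding $(q^{k+1};q^{k+1})_\infty^2/(q^{2k+2};q^{2k+2})_\infty$ together with the level-independent prefactor $(q^2;q^2)_\infty/(q;q)_\infty^3$.

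For consistency and as a fallback I would use the already-available evaluation \eqref{eq:S-m-(-1)-infty}. Since the left side of \eqref{eq:strange-A2-sum} equals $\cZ_k(-1,q)\cdot(q^2;q^2)_\infty^2/(q;q)_\infty^3$, summing one of the two chains in the double sum ought to reduce \eqref{eq:strange-A2-sum} to a scalar multiple of the $t=-1$ specialization appearing in \eqref{eq:Z-inf-expression}; confirming this re-summation both supplies an independent proof and explains why \eqref{eq:strange-A2-sum} is ``closely tied'' to \eqref{eq:S-m-(-1)-infty}. I would also verify \eqref{eq:strange-A2-sum} directly for $k=1$, where both chains are empty, and set up an induction on $k$ driven by a single step of the same $A_2$ transform.

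The hard part will be pinning down the exact seed Bailey pair and checking that the terminal evaluation produces \emph{precisely} the stated product rather than a superficially different theta quotient; matching the $(q;q)_\infty^{-3}$ prefactor and the modulus $2k+2$ against the genuine output of the $A_2$ iteration is where the real bookkeeping lies, and it is easy to be off by a spurious theta factor. A secondary subtlety is justifying the interchange of the $2k$-fold summation order when passing to the infinite product, which I would control by the standard termwise $q$-adic convergence estimates available for these multisums.
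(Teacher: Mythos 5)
Your primary route --- iterating an $\mathrm{A}_2$ Bailey lemma from a seed pair --- runs against the structure of this identity, and the paper explicitly flags why: although the quadratic form $m_i^2+m_in_i+n_i^2$ looks $\mathrm{A}_2$, the two binomial chains run in \emph{opposite} directions ($0\le m_1\le\cdots\le m_k$ but $n_1\ge\cdots\ge n_k\ge 0$), which is not the shape the Andrews--Schilling--Warnaar machinery produces, and the resulting modulus is $2k+2$ rather than the $3k+4$-type modulus that a genuine $\mathrm{A}_2$ Bailey iteration yields. So there is no reason to expect a seed Bailey pair and $k-1$ applications of the $\mathrm{A}_2$ transform to assemble this particular multisum, and the product $(q^{k+1};q^{k+1})_\infty^2/\big((q^2;q^2)_\infty(q^{2k+2};q^{2k+2})_\infty\big)$ is not of the $\mathrm{A}_2^{(1)}$ denominator type your terminal step anticipates. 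I would abandon this prong.

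Your fallback is directionally the paper's actual argument, but it leaves the central step as a black box. The paper's proof runs: (i) the right-hand side of \eqref{eq:strange-A2-sum} is identified as $(-q;q)_\infty^2\lim_{N\to\infty}\tcZ_k(N;-1,q)$ via the reformulation \eqref{eq:Z-new}; (ii) the $k$-fold expression \eqref{eq:Z-expression} at $t=-1$ plus a standard $\mathrm{A}_1$ polynomial Rogers--Ramanujan identity gives the single bilateral sum \eqref{eq:Z-(-1)-expression}; (iii) Jacobi's triple product then produces the left-hand side. The step you describe as ``summing one of the two chains ought to reduce \eqref{eq:strange-A2-sum} to the $t=-1$ specialization of \eqref{eq:Z-inf-expression}'' is exactly the equality between \eqref{eq:Z-new} and \eqref{eq:Z-expression}, and it is the hardest part of the paper: it requires introducing the semi-truncation $\cV_k(N;t,q)$, the recursion \eqref{eq:V-rec}, an induction driven by the ${}_3\phi_2$ transform \eqref{eq:3phi2} and Heine's transformations, an extension of Warnaar's binomial-rearrangement lemma, and finally Theorem \ref{th:old=new}. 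You supply no mechanism for this resummation --- note also that the $(-1;q)_{n_1}^2$ factor couples the seed level to the $n$-chain, so neither chain can be summed independently in closed form. Until that reduction is carried out, the proposal has a genuine gap at its core; the $k=1$ check and the convergence remarks are fine but do not touch it.
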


It is an easy observation that the format of \eqref{eq:strange-A2-sum} resembles $\mathrm{A}_2$ Rogers--Ramanujan type identities introduced by Andrews, Schilling and Warnaar such as \cite[p.~694, eq.~(5.22)]{ASW1999}:
\begin{align*}
	&\frac{(q^{k+1},q^{k+1},q^{k+2},q^{2k+2},q^{2k+3},q^{2k+3},q^{3k+4},q^{3k+4};q^{3k+4})_\infty}{(q;q)_\infty^3}\notag\\
	&\qquad\qquad\qquad = \sum_{\substack{m_1,\ldots,m_k\ge 0\\n_1,\ldots,n_k\ge 0}} \frac{q^{\sum_{i=1}^k (m_i^2-m_in_i+n_i^2)} (1-q^{m_1+n_1+1})}{ (q;q)_{m_k}(q;q)_{n_k}(q;q)_{m_1+n_1+1}}\notag\\
	&\qquad\qquad\qquad\quad\times \qbinom{m_k}{m_{k-1}}_q \qbinom{m_{k-1}}{m_{k-2}}_q \cdots \qbinom{m_2}{m_1}_q \qbinom{n_k}{n_{k-1}}_q \qbinom{n_{k-1}}{n_{k-2}}_q \cdots \qbinom{n_2}{n_1}_q.
\end{align*}
However, the fact that the summation indices $m_i$ and $n_i$ in \eqref{eq:strange-A2-sum} are in reverse order makes it fundamentally different from the above $\mathrm{A}_2$ Rogers--Ramanujan type identity. This is also to some extent a hint for \eqref{eq:strange-A2-sum} lacking the usual ``triple the number of folds'' phenomenon (from the $\mathrm{A}_2$ Macdonald identity \cite{Mac1972}) for the modulus.

Next, let us recall from \eqref{eq:zeta-limit} that the motivic Cohen--Lenstra zeta functions are the limiting case of Quot zeta functions. It turns out that for the $(2,2k)$ torus links, the expression of the Quot zeta function $Z_{R^{(2,2k)\oplus N}}(t)$, or equivalently its numerator part $\NZ_{R^{(2,2k)\oplus N}}(t)$, is also known, as given in \cite[Theorem 1.10]{HJ2023}:
\begin{align*}
	&\NZ_{R^{(2,2k)\oplus N}}(t)\\
	&\ = t^{2N}\mathbb{L}^{N^2-N}(\mathbb{L}^{-1};\mathbb{L}^{-1})_N(t^{-1};\mathbb{L}^{-1})_N^2\\
	&\ \quad\times\sum_{\substack{r_k\ge \cdots\ge r_1\ge 0\\s_k\ge \cdots\ge s_1\ge 0}} \frac{(t\mathbb{L}^N)^{\sum_{i=1}^k (2r_i-s_i)} \mathbb{L}^{-\sum_{i=1}^k (r_i^2-r_is_i+s_i^2)}}{(\mathbb{L}^{-1};\mathbb{L}^{-1})_{N-r_k}(\mathbb{L}^{-1};\mathbb{L}^{-1})_{r_{k}-r_{k-1}}\cdots (\mathbb{L}^{-1};\mathbb{L}^{-1})_{r_2-r_1}}\notag\\
	&\ \quad\times \frac{1}{(t\mathbb{L}^{N-1};\mathbb{L}^{-1})_{r_1}^2(\mathbb{L}^{-1};\mathbb{L}^{-1})_{s_1}} \qbinom{r_k-s_{k-1}}{r_k-s_k}_{\mathbb{L}^{-1}}\cdots \qbinom{r_{2}-s_1}{r_{2}-s_{2}}_{\mathbb{L}^{-1}}\qbinom{r_1}{r_1-s_1}_{\mathbb{L}^{-1}},
\end{align*}
where we have used the relation:
\begin{align*}
	(t;\mathbb{L})_{N-r_1} = (-t)^N \mathbb{L}^{\binom{N}{2}} \frac{(t^{-1};\mathbb{L}^{-1})_N^2}{(t\mathbb{L}^{N-1};\mathbb{L}^{-1})_{r_1}}.
\end{align*}

Hence, we are strongly suggested to consider the following truncation of $\cZ_k(t,q)$:
\begin{align}
	\tcZ_k(N;t,q)&:= \sum_{\substack{r_k\ge \cdots\ge r_1\ge 0\\s_k\ge \cdots\ge s_1\ge 0}} \frac{t^{\sum_{i=1}^k (2r_i-s_i)} q^{\sum_{i=1}^k (r_i^2-r_is_i+s_i^2)}}{(q;q)_{N-r_k}(q;q)_{r_2-r_1}\cdots (q;q)_{r_{k}-r_{k-1}}(tq;q)_{r_1}^2(q;q)_{s_1}}\notag\\
	&\,\quad\times \qbinom{r_k-s_{k-1}}{r_k-s_k}_q\qbinom{r_{k-1}-s_{k-2}}{r_{k-1}-s_{k-1}}_q\cdots \qbinom{r_{2}-s_1}{r_{2}-s_{2}}_q\qbinom{r_1}{r_1-s_1}_q.
\end{align}
It is notable that this sum is \emph{finite} as the factor $1/(q;q)_{N-r_k}$ requires $r_k\le N$ to ensure its nonvanishing. Also, at the limit $N\to \infty$,
\begin{align}\label{eq:Z-finite-limit}
	\lim_{N\to \infty} \tcZ_k(N;t,q) = \frac{\cZ_k(t,q)}{(q;q)_\infty}.
\end{align}
Furthermore,
\begin{align}\label{eq:NZ-cZ}
	\NZ_{R^{(2,2k)\oplus N}}(t) = t^{2N}\mathbb{L}^{N^2-N}(\mathbb{L}^{-1};\mathbb{L}^{-1})_N(t^{-1};\mathbb{L}^{-1})_N^2 \tcZ_k(N;t\mathbb{L}^{N},\mathbb{L}^{-1}).
\end{align}

Now our objective is to show that Theorem \ref{th:Z-inf-expression} can be finitized as follows.

\begin{theorem}\label{th:Z-expression}
	For any nonnegative integer $N$,
	\begin{align}\label{eq:Z-expression}
		\tcZ_k(N;t,q) &= \frac{1}{(tq;q)_N} \sum_{n_1,\ldots,n_k\ge 0} \frac{t^{\sum_{i=1}^k 2n_i} q^{\sum_{i=1}^k n_i^2}}{(q;q)_{N-n_k}(q;q)_{n_k}(tq;q)_{n_1}}\notag\\
		&\quad\times  \qbinom{n_k}{n_{k-1}}_q\qbinom{n_{k-1}}{n_{k-2}}_q\cdots \qbinom{n_2}{n_1}_q.
	\end{align}
	Consequently,
	\begin{align}\label{eq:NZ-2k}
		\NZ_{R^{(2,2k)\oplus N}}(t) &= (t\mathbb{L}^{N-1};\mathbb{L}^{-1})_N \sum_{n_1,\ldots,n_k\ge 0} \frac{(t\mathbb{L}^N)^{\sum_{i=1}^k 2n_i} \mathbb{L}^{-\sum_{i=1}^k n_i^2}}{(t\mathbb{L}^{N-1};\mathbb{L}^{-1})_{n_1}}\notag\\
		&\quad\times  \qbinom{N}{n_{k}}_{\mathbb{L}^{-1}}\qbinom{n_k}{n_{k-1}}_{\mathbb{L}^{-1}}\qbinom{n_{k-1}}{n_{k-2}}_{\mathbb{L}^{-1}}\cdots \qbinom{n_2}{n_1}_{\mathbb{L}^{-1}}.
	\end{align}
\end{theorem}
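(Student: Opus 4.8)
The plan is to prove the finite identity \eqref{eq:Z-expression} directly and then harvest the two stated consequences. The limiting relation \eqref{eq:Z-finite-limit} turns \eqref{eq:Z-expression} into \eqref{eq:Z-inf-expression} as $N\to\infty$: the factor $1/(q;q)_{N-n_k}$ tends to $1/(q;q)_\infty$, and the telescoping
\[
\frac{1}{(q;q)_{n_k}}\,\qbinom{n_k}{n_{k-1}}_q\cdots\qbinom{n_2}{n_1}_q=\frac{1}{(q;q)_{n_1}(q;q)_{n_2-n_1}\cdots(q;q)_{n_k-n_{k-1}}}
\]
reproduces exactly the summand in \eqref{eq:Z-inf-expression}. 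Likewise \eqref{eq:NZ-2k} follows from \eqref{eq:NZ-cZ} after the substitution $t\mapsto t\mathbb{L}^{N}$, $q\mapsto\mathbb{L}^{-1}$ (so that $1/(tq;q)_N$ becomes $1/(t\mathbb{L}^{N-1};\mathbb{L}^{-1})_N$), together with routine rewriting of the prefactor and of the $q$-binomials under $q\mapsto\mathbb{L}^{-1}$. Thus everything rests on \eqref{eq:Z-expression}.

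The heart of \eqref{eq:Z-expression} is the collapse of the $2k$-fold sum over $(\boldsymbol r,\boldsymbol s)$ to the $k$-fold sum over $\boldsymbol n$, i.e.\ the summation over the $s$-ladder. The obstruction is that this is \emph{not} a term-by-term identity in the $r$-variables: the quadratic form $r_i^2-r_is_i+s_i^2$ is the $\mathrm{A}_2$ form, each $s_i$ is coupled to the $r$-ladder in a zig-zag (it sits in the two consecutive $q$-binomials $\qbinom{r_{i+1}-s_i}{r_{i+1}-s_{i+1}}_q$ and $\qbinom{r_i-s_{i-1}}{r_i-s_i}_q$), and, crucially, the global factor $1/(tq;q)_N$ on the right cannot be produced by any fixed-$\boldsymbol r$ evaluation of the inner $s$-sum. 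A short check already at $k=1$, $N=1$ confirms that equating the two sides termwise in $r_1$ fails by a factor $1-tq$, so a genuine transformation that re-indexes the summation is required.

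My proposal is an induction on the number of rungs $k$, peeling off one rung of the ladder at a time. Holding the adjacent variables fixed, the contribution of the top rung is a two-variable sum of the shape
\[
\sum_{\substack{r_k\ge r_{k-1}\\ s_{k-1}\le s_k\le r_k}}\frac{t^{2r_k-s_k}\,q^{r_k^2-r_ks_k+s_k^2}}{(q;q)_{N-r_k}(q;q)_{r_k-r_{k-1}}}\,\qbinom{r_k-s_{k-1}}{r_k-s_k}_q,
\]
and the inductive step is to evaluate such a rung by a finite $\mathrm{A}_2$-type ($q$-Gauss/$q$-Chu–Vandermonde) summation so that the coupled pair $(r_k,s_k)$ collapses to the single index $n_k$ and the single $q$-binomial $\qbinom{n_k}{n_{k-1}}_q$ of \eqref{eq:Z-expression}. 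Equivalently, one may read the right-hand side of \eqref{eq:Z-expression} as a $k$-fold finite iterate of Bailey's lemma built on the unit Bailey pair relative to $a=t$ (this is the origin of the tell-tale factor $1/\big((q;q)_{n_1}(tq;q)_{n_1}\big)$, and of $1/(tq;q)_\infty$ in the limit), the doubling $t^{2\sum n_i}$ arising from a suitable specialization of the free parameters; one then shows that the double sum $\tcZ_k$ is the same iterate, the $s$-summation supplying the Bailey-pair relation at each level. The base case $k=1$ is a single finite summation, which I have verified directly.

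The main obstacle is the rung evaluation itself together with the $t$-Pochhammer bookkeeping: one must reconcile the squared factor $(tq;q)_{r_1}^2$ on the left with the local factor $(tq;q)_{n_1}$ and the global factor $1/(tq;q)_N$ on the right. The non-locality of $1/(tq;q)_N$ strongly suggests that it is generated by a $q$-Gauss-type evaluation against $1/(q;q)_{N-r_k}$ at the outermost rung rather than by any inner summation, so carrying out the finitization correctly — instead of first proving the cleaner infinite identity \eqref{eq:Z-inf-expression} and only then finitizing — is where the real difficulty lies.
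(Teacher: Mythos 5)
There is a genuine gap: the entire content of \eqref{eq:Z-expression} is the collapse of the $2k$-fold $(\boldsymbol r,\boldsymbol s)$-sum to the $k$-fold $\boldsymbol n$-sum, and your proposal does not carry out that step. You correctly dispose of the two consequences (the $N\to\infty$ limit and the substitution into \eqref{eq:NZ-cZ} are indeed routine), and you correctly diagnose that no termwise-in-$\boldsymbol r$ evaluation can work; but the proposed inductive step --- evaluating the top rung so that the coupled pair $(r_k,s_k)$ collapses to a single $n_k$ via a $q$-Gauss or $q$-Chu--Vandermonde summation --- is asserted rather than demonstrated, and you yourself flag the ``rung evaluation'' and the $t$-Pochhammer bookkeeping as unresolved obstacles. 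The inner $s_k$-sum at fixed $r_k$ carries the weight $q^{s_k^2-r_ks_k}$ against $\qbinom{r_k-s_{k-1}}{r_k-s_k}_q$, which is not of the $q^{\binom{j}{2}}z^j\qbinom{m}{j}_q$ shape that the finite $q$-binomial theorem or $q$-Chu--Vandermonde evaluates in closed form, so the single-rung collapse does not go through as described.

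For comparison, the paper's route is considerably longer and structurally different: it first substitutes $n_j=r_j-s_j$ globally, which splits the sum into two ladders running in \emph{opposite} directions; it then evaluates the innermost sum by a ${}_3\phi_2$ transform \eqref{eq:trans-2}, reverses one ladder with an extension of Warnaar's lemma to reach \eqref{eq:Z-new}, sets up a recursion in $k$ for a semi-truncation $\cV_k$ whose inductive step \eqref{eq:induction-key} is itself a nontrivial two-variable identity proved with Heine's transforms, and arrives at a $k$-fold sum \eqref{eq:Z-expression-new} whose summand carries the alternating factor $(-1)^{n_k}t^{-n_k}q^{-\binom{n_k}{2}}(t;q)_{n_k}$ --- i.e.\ \emph{not} the Bailey-chain form you expect to land on. Converting that alternating sum into the positive-term sum of \eqref{eq:Z-expression} is then a separate theorem (Theorem \ref{th:old=new}), proved by $2k$ sequential applications of Lemma \ref{le:Z-1=Z-2-full} with carefully chosen parameters. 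So even granting some rung evaluation, your single induction would terminate at the wrong target without this additional conversion machinery; the Bailey-lemma reading of the right-hand side is a correct observation about its form but does not substitute for the missing transformations.
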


Here \eqref{eq:NZ-2k} follows from \eqref{eq:NZ-cZ} with an application of the relation:
\begin{align*}
	(t\mathbb{L}^{N-1};\mathbb{L}^{-1})_N = (-1)^N t^N \mathbb{L}^{\binom{N}{2}} (t^{-1};\mathbb{L}^{-1})_N.
\end{align*}

Finally, to close this section, we present two implications of our previous results on $\hNZ_{R^{(2,2k)}}(t)$ and $\NZ_{R^{(2,2k)\oplus N}}(t)$.

The first one concerns a remarkable \emph{reflection formula} of Huang and Jiang \cite[Conjecture 1.6 and Theorem 1.7]{HJ2023}. To state this formula, we let $E=\Omega^{\oplus N}$ with $\Omega$ the dualizing module of $R$ under the assumption that $\tR \simeq \mathbb{K}[[T]]^s$. Then \cite[Conjecture 1.6]{HJ2023} predicates that
\begin{align*}
	\NZ_E(t) \overset{?}{=} (t^{2N}\mathbb{L}^{N^2})^\delta \NZ_E(t^{-1}\mathbb{L}^{-N}),
\end{align*}
where $\delta:=\operatorname{dim}_{\mathbb{K}} \tR/R$ is the Serre invariant. This formula remains conjectural but Huang and Jiang proved its point-counting version in \cite[Theorem 1.7]{HJ2023} with recourse to deep techniques in harmonic analysis. Specializing to the case of torus links $R^{(2,2k)}$ and recalling \eqref{eq:NZ-cZ}, it is clear that the reflection
\begin{align}
	\NZ_{R^{(2,2k)\oplus N}}(t) = (t^{2N}\mathbb{L}^{N^2})^k \NZ_{R^{(2,2k)\oplus N}}(t^{-1}\mathbb{L}^{-N})
\end{align}
is equivalent to the following relation, for which we shall offer a purely $q$-theoretic proof.

\begin{theorem}\label{th:HJ-transform}
	For any nonnegative integer $N$,
	\begin{align}\label{eq:HJ-transform}
		\tcZ_k(N;t,q) = \frac{(1-t)^2 q^N (t^{2N} q^{N^2})^{k-1}}{(1-t q^N)^2} \tcZ_k(N;t^{-1}q^{-N},q).
	\end{align}
\end{theorem}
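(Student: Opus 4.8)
The plan is to avoid working with the $2k$-fold sum defining $\tcZ_k(N;t,q)$ directly, and instead to exploit Theorem \ref{th:Z-expression}, which rewrites $\tcZ_k(N;t,q)$ as a single $k$-fold sum with a much more transparent dependence on $t$. Substituting the right-hand side of \eqref{eq:Z-expression} into both sides of \eqref{eq:HJ-transform} reduces the reflection identity to a statement about the $k$-fold sum
\begin{align*}
	S(t) := \sum_{n_1,\ldots,n_k\ge 0} \frac{t^{2\sum_{i=1}^k n_i}\, q^{\sum_{i=1}^k n_i^2}}{(q;q)_{N-n_k}(q;q)_{n_k}(tq;q)_{n_1}}\qbinom{n_k}{n_{k-1}}_q \cdots \qbinom{n_2}{n_1}_q.
\end{align*}
Indeed, since $\tcZ_k(N;t,q) = S(t)/(tq;q)_N$, the claimed reflection is equivalent to the cleaner relation
\begin{align*}
	\frac{S(t)}{(tq;q)_N} = \frac{(1-t)^2 q^N (t^{2N}q^{N^2})^{k-1}}{(1-tq^N)^2}\cdot \frac{S(t^{-1}q^{-N})}{(t^{-1}q^{-N}q;q)_N}.
\end{align*}
First I would simplify the Pochhammer prefactors: a direct computation of $(t^{-1}q^{1-N};q)_N$ in terms of $(tq;q)_N$ (pulling out the appropriate power of $t$ and $q$ and reversing the product) collapses the ratio of denominators against the explicit prefactor, so that the identity becomes the symmetry statement that $S(t^{-1}q^{-N})$ equals $S(t)$ times an explicit monomial in $t$ and $q$.

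The core of the argument is therefore to track how $S(t)$ transforms under $t\mapsto t^{-1}q^{-N}$. The key step I expect to carry the proof is a reversal of summation indices $n_i \mapsto N - \text{(something)}$, matched with the $q$-binomial reflection $\qbinom{a}{b}_q = \qbinom{a}{a-b}_q$ and the involution $\qbinom{N}{m}_{q} $ under $q\mapsto q^{-1}$. Concretely, under $t\mapsto t^{-1}q^{-N}$ the outer factor $(tq;q)_{n_1}^{-1}$ and the power $t^{2\sum n_i}$ change in a controlled way; writing out $t^{-2\sum n_i}q^{-2N\sum n_i}$ and absorbing the $q$-powers into the Gaussian exponent $q^{\sum n_i^2}$ via completing the square (i.e. $n_i^2 - 2N n_i = (n_i-N)^2 - N^2$) should realign the summand with that of $S(t)$ after the substitution $n_i \mapsto N-n_i$ in the single chain of $q$-binomials. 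The telescoping structure $\qbinom{n_k}{n_{k-1}}_q\cdots\qbinom{n_2}{n_1}_q$ is crucial here: because the indices are nested, the reversal $n_i\mapsto N-n_i$ acts compatibly across the whole chain, and the boundary factor $(q;q)_{N-n_k}(q;q)_{n_k}$ is manifestly invariant.

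The main obstacle will be bookkeeping the accumulated powers of $t$ and $q$ so that they reproduce exactly the prefactor $q^N(t^{2N}q^{N^2})^{k-1}/(1-tq^N)^2 \times (1-t)^2$, and in particular verifying that the single factor $(tq;q)_{n_1}$ — which is the only place where $t$ enters inside the sum beyond the overall power — transforms correctly. I would isolate this factor and handle the $(1-t)^2/(1-tq^N)^2$ discrepancy separately, most likely by noting that the reflection $(tq;q)_{n_1}\mapsto (t^{-1}q^{1-N};q)_{n_1}$ produces a ratio $(1-t)/(1-tq^N)$ per copy after extracting monomials, with the square arising from the squared Pochhammer in the definition of $\tcZ_k$. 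Once the summand is shown to be invariant up to the predicted monomial, the equality of the sums is immediate and the theorem follows. As a sanity check I would verify the case $k=1$ by hand, where $S(t)$ is a single $q$-hypergeometric sum and the reflection can be confirmed directly, thereby validating the normalization of the monomial prefactor before committing to the general index reversal.
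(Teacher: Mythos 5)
Your starting point---feeding the $k$-fold form \eqref{eq:Z-expression} into both sides of \eqref{eq:HJ-transform} and reversing the summation indices---is exactly the paper's first move. But the central claim of your argument, that after the reversal the summand ``realigns'' termwise with that of $S(t)$ up to a monomial, is false, and this is precisely where the real work lies. Writing the chain of $q$-binomials as $1/\big((q;q)_{N-n_k}(q;q)_{n_k-n_{k-1}}\cdots(q;q)_{n_2-n_1}(q;q)_{n_1}\big)$, the reversal $n_i\mapsto N-n_{k+1-i}$ does permute these gap factors among themselves, but it sends the single $t$-dependent factor $(t^{-1}q^{1-N};q)_{n_1}$ to $(t^{-1}q^{1-N};q)_{N-n_k}$: the Pochhammer migrates from the bottom of the nested chain to the top. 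After extracting monomials this becomes a factor of the form $(t;q)_{n_k}$ attached to the index $n_k$, not $(tq;q)_{n_1}$ attached to $n_1$, so the reflected sum is \emph{not} a termwise multiple of the original. What remains is the genuine $k$-fold identity \eqref{eq:HJ-new}, in which the $t$-Pochhammer has to be transported from the $n_k$ end of the chain back to the $n_1$ end; the paper accomplishes this by iterating the two-index transformation of Lemma \ref{le:Z-1=Z-2-full} (itself a Heine ${}_2\phi_1$ transform) one summation variable at a time, passing through \eqref{eq:old-new-middle} and Theorem \ref{th:old=new}. Your proposal contains no substitute for this machinery.

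Your own $k=1$ sanity check would have exposed the problem: there the reflection identity is exactly the equivalence of the two single-sum forms \eqref{eq:Z1-1} and \eqref{eq:Z1-2}, which the paper proves with Heine's second transformation---it is not an index reversal. Your accounting of the prefactor is also off: $(t^{-1}q^{1-N};q)_N = (-1)^N t^{-N}q^{-\binom{N}{2}}(t;q)_N$ supplies only one factor of $(1-t)/(1-tq^N)$ relative to $(tq;q)_N$, and the second factor does not come from ``the squared Pochhammer in the definition of $\tcZ_k$'' (you are working in the $k$-fold form, which contains a single $(tq;q)_{n_1}$); it is buried inside the nontrivial identity \eqref{eq:HJ-new}. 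So the proposal has a genuine gap at its central step.
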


Our second interest revolves around a \emph{nonnegativity conjecture} in \cite[Conjecture 9.13]{HJ2023}.

\begin{conjecture}[Huang--Jiang, Nonnegativity Conjecture]\label{conj-HJ-nonnegativity}
	The zeta functions $\NZ_{R^{(2,2k)\oplus N}}(-t)$ and $\hNZ_{R^{(2,2k)}}(-t)$, as series in $t$ and $\mathbb{L}$, have nonnegative coefficients.
\end{conjecture}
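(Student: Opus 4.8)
The plan is to read off both nonnegativity statements directly from the single-fold expressions in Theorems \ref{th:Z-inf-expression} and \ref{th:Z-expression}, which is precisely where their value lies. The original $2k$-fold expression \eqref{eq:2k-sum-double} carries the sign-producing exponent $t^{\sum(2r_i-s_i)}$ together with Hall--Littlewood pieces and is \emph{not} manifestly effective, whereas the reduced forms \eqref{eq:hNZ-2k} and \eqref{eq:NZ-2k} become manifestly nonnegative after the substitution $t\mapsto -t$. The engine in both cases is the same: each surviving power of $t$ is even and hence keeps a positive sign, a Pochhammer quotient telescopes into a product of factors $1+t\mathbb{L}^{\pm j}$, and the remaining $q$-Pochhammer and Gaussian factors are partition generating functions.

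For $\hNZ_{R^{(2,2k)}}(-t)$ I would start from \eqref{eq:hNZ-2k} and set $t\mapsto -t$. Each exponent $\sum_{i}2n_i$ is even, so the monomial $(-t)^{\sum 2n_i}=t^{\sum 2n_i}$ has coefficient $1$, while $\mathbb{L}^{-\sum n_i^2}$ is a single nonpositive power of $\mathbb{L}$. The only delicate factor is the quotient of the prefactor $(-t\mathbb{L}^{-1};\mathbb{L}^{-1})_\infty$ by the denominator $(-t\mathbb{L}^{-1};\mathbb{L}^{-1})_{n_1}$, but these telescope to $(-t\mathbb{L}^{-n_1-1};\mathbb{L}^{-1})_\infty=\prod_{j\ge n_1+1}(1+t\mathbb{L}^{-j})$, which has nonnegative coefficients in $t$ and $\mathbb{L}^{-1}$. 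Finally each $1/(\mathbb{L}^{-1};\mathbb{L}^{-1})_m$ is the generating function for partitions into parts $\le m$ and therefore has nonnegative coefficients in $\mathbb{L}^{-1}$. Being a product of series with nonnegative coefficients, summed over $(n_1,\ldots,n_k)$ with only finitely many tuples contributing to each power of $t$, the whole expression is a power series in $t$ and $\mathbb{L}^{-1}$ with nonnegative coefficients.

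For the finite case $\NZ_{R^{(2,2k)\oplus N}}(-t)$ I would run the identical argument on \eqref{eq:NZ-2k}. After $t\mapsto -t$ the factor $(-t\mathbb{L}^N)^{\sum 2n_i}=t^{\sum 2n_i}\mathbb{L}^{2N\sum n_i}$ is a monomial of coefficient $1$, and the Pochhammer quotient now telescopes as
\begin{align*}
\frac{(-t\mathbb{L}^{N-1};\mathbb{L}^{-1})_N}{(-t\mathbb{L}^{N-1};\mathbb{L}^{-1})_{n_1}}=\prod_{i=0}^{N-n_1-1}\bigl(1+t\mathbb{L}^{i}\bigr),
\end{align*}
a polynomial with nonnegative coefficients in $t$ and $\mathbb{L}$. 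The Gaussian binomials $\qbinom{N}{n_k}_{\mathbb{L}^{-1}},\qbinom{n_k}{n_{k-1}}_{\mathbb{L}^{-1}},\ldots$ are polynomials in $\mathbb{L}^{-1}$ with nonnegative coefficients, the factor $\qbinom{N}{n_k}_{\mathbb{L}^{-1}}$ forces $n_k\le N$ so the sum is finite, and $\mathbb{L}^{-\sum n_i^2}$ is again a single monomial. Hence $\NZ_{R^{(2,2k)\oplus N}}(-t)$ is a Laurent polynomial in $\mathbb{L}$ and a polynomial in $t$ with nonnegative coefficients; one could alternatively recover the infinite case from this via the limit \eqref{eq:Z-finite-limit} together with \eqref{eq:zeta-limit}, though the direct argument is cleaner.

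I do not anticipate a genuine obstacle: all the real work was already done in establishing the fold-reduction Theorems \ref{th:Z-inf-expression} and \ref{th:Z-expression}, and what remains is purely a matter of recombining Pochhammer symbols into manifestly effective products. The one point requiring care is the bookkeeping of what ``nonnegative coefficients as a series in $t$ and $\mathbb{L}$'' means: in the motivic Cohen--Lenstra case it is nonnegativity as a power series in $t$ and $\mathbb{L}^{-1}$, while in the finite Quot case it is nonnegativity of a Laurent polynomial in $\mathbb{L}$. In both settings the telescoping step must be checked to respect the correct direction of the $\mathbb{L}$-grading, which it does, and one must confirm that each fixed monomial $t^a\mathbb{L}^b$ receives contributions from only finitely many summands, so that every coefficient is a well-defined nonnegative integer.
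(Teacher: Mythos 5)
Your proposal is correct and follows exactly the paper's route: the paper likewise deduces nonnegativity directly from the reduced expressions \eqref{eq:hNZ-2k} and \eqref{eq:NZ-2k}, observing that after $t\mapsto -t$ every power of $t$ in the numerator is even and the Pochhammer quotient $(-t\mathbb{L}^{-1};\mathbb{L}^{-1})_N/(-t\mathbb{L}^{-1};\mathbb{L}^{-1})_{n}$ (for $N\ge n$ or $N\to\infty$) is a nonnegative bivariate series, with the Gaussian binomials and remaining Pochhammer factors manifestly nonnegative. Your write-up simply spells out the telescoping and finiteness bookkeeping that the paper leaves implicit.
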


We shall answer it in the affirmative.

\begin{theorem}
	Conjecture \ref{conj-HJ-nonnegativity} is true.
\end{theorem}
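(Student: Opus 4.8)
The plan is to read off the nonnegativity directly from the single–fold expressions \eqref{eq:hNZ-2k} and \eqref{eq:NZ-2k} supplied by Theorems \ref{th:Z-inf-expression} and \ref{th:Z-expression}, after absorbing the one sign-alternating factor into the global Pochhammer prefactor. The two assertions are handled separately but by the same mechanism; throughout we use that the exponent $\sum_{i=1}^k 2n_i$ of $t$ is even, so that $t\mapsto -t$ leaves the monomial $t^{\sum 2n_i}$ unchanged and only turns each $(t\,\cdot\,;\,\cdot\,)$ factor into $(-t\,\cdot\,;\,\cdot\,)$.

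For $\hNZ_{R^{(2,2k)}}(-t)$ I pass to $\mathbb{L}\mapsto q^{-1}$ and set $t\mapsto -t$ in \eqref{eq:hNZ-2k}. The only ingredient with alternating $t$-coefficients is the factor $1/(-tq;q)_{n_1}$, and the crucial point is that it telescopes against the global prefactor:
\[
\frac{(-tq;q)_\infty}{(-tq;q)_{n_1}} = (-tq^{n_1+1};q)_\infty = \prod_{j>n_1}\bigl(1+tq^{j}\bigr),
\]
which has manifestly nonnegative coefficients in $t$ and $q$. Every other ingredient of the summand — the monomials $t^{\sum 2n_i}$ and $q^{\sum n_i^2}$ and each reciprocal $1/(q;q)_m$ — also has nonnegative $q$-coefficients, and the summation runs over the cone $n_k\ge\cdots\ge n_1\ge 0$. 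Hence each summand, and therefore the whole series (viewed in the natural variable $\mathbb{L}^{-1}$), has nonnegative coefficients.

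For the finite $\NZ_{R^{(2,2k)\oplus N}}(-t)$ I apply the same idea to \eqref{eq:NZ-2k}, now keeping $\mathbb{L}$ as the positive variable. After $t\mapsto -t$ the prefactor and denominator again telescope,
\[
\frac{(-t\mathbb{L}^{N-1};\mathbb{L}^{-1})_N}{(-t\mathbb{L}^{N-1};\mathbb{L}^{-1})_{n_1}} = \prod_{i=0}^{N-n_1-1}\bigl(1+t\mathbb{L}^{i}\bigr),
\]
which is nonnegative in $t$ and $\mathbb{L}$. The remaining factors are in base $\mathbb{L}^{-1}$; I convert each via $\qbinom{a}{b}_{\mathbb{L}^{-1}} = \mathbb{L}^{-b(a-b)}\qbinom{a}{b}_{\mathbb{L}}$, turning all $q$-binomials into genuine polynomials in $\mathbb{L}$ with nonnegative coefficients at the cost of a monomial $\mathbb{L}$-power. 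Collecting this power with $\mathbb{L}^{2N\sum n_i}$ from $(t\mathbb{L}^{N})^{\sum 2n_i}$ and with $\mathbb{L}^{-\sum n_i^2}$, the pure squares cancel exactly and the total exponent of $\mathbb{L}$ in the monomial prefactor of each summand reduces to
\[
2N\sum_{i=1}^{k} n_i - Nn_k - \sum_{j=1}^{k-1} n_j n_{j+1}.
\]
Using the constraints $0\le n_1\le\cdots\le n_k\le N$ forced by the $q$-binomials, the estimate $\sum_{j=1}^{k-1} n_j n_{j+1}\le N\sum_{j=2}^{k} n_j$ shows this exponent is at least $N\sum_{i=1}^{k-1} n_i + Nn_1\ge 0$. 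Thus each summand is a polynomial in $t$ and $\mathbb{L}$ with nonnegative coefficients, and summing over the finite range preserves nonnegativity.

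The conceptual heart is the telescoping that neutralizes the unique sign-alternating factor; once it is in place, the infinite case is immediate and manifestly nonnegative. The main computational obstacle lies in the finite case: after converting every $q$-binomial from base $\mathbb{L}^{-1}$ to base $\mathbb{L}$, one must check that the leftover monomial still carries a \emph{nonnegative} power of $\mathbb{L}$. This rests on the exact cancellation of the quadratic terms $\sum_i n_i^2$ and on exploiting the \emph{entire} chain $n_1\le\cdots\le n_k\le N$; a cruder bound on $\sum_j n_j n_{j+1}$ would not close the argument.
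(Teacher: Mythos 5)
Your proof is correct and rests on exactly the same observation as the paper's one-line argument: after $t\mapsto -t$, the only sign-alternating ingredient in \eqref{eq:hNZ-2k} and \eqref{eq:NZ-2k} is the ratio of the global Pochhammer prefactor to the $(t\,\cdot\,;\,\cdot\,)_{n_1}$ denominator, which telescopes to a manifestly nonnegative product, while everything else is already nonnegative. Your additional bookkeeping in the finite case --- converting the $q$-binomials to base $\mathbb{L}$ and checking that the residual exponent $2N\sum n_i - Nn_k - \sum_j n_jn_{j+1}$ is nonnegative --- is a correct and slightly more thorough verification than the paper bothers to record, but it does not change the method.
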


\begin{proof}
	We only need to recall \eqref{eq:hNZ-2k} and \eqref{eq:NZ-2k}, and notice the trivial fact that for any nonnegative integer $n$, $(-t\mathbb{L}^{-1};\mathbb{L}^{-1})_N/(-t\mathbb{L}^{-1};\mathbb{L}^{-1})_n$ is a nonnegative bivariate series whenever $N\ge n$ or $N\to \infty$.
\end{proof}

\section{$q$-Series prerequisites}

In this section, we collect some preliminary results on $q$-series. First, we recall \emph{Jacobi's triple product identity} \cite[p.~21, eq.~(2.2.10)]{And1998}:
\begin{lemma}[Jacobi's triple product]
	\begin{align}\label{eq:JTP}
		\sum_{n=-\infty}^\infty z^n q^{n^2} = (-zq,-q/z,q^2;q^2)_\infty.
	\end{align}
\end{lemma}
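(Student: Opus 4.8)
The plan is to prove Jacobi's triple product by showing that both sides satisfy the same quasi-periodicity functional equation and then pinning down the resulting constant. Write $f(z):=\sum_{n=-\infty}^{\infty} z^n q^{n^2}$ for the left-hand side and $P(z):=(-zq,-q/z,q^2;q^2)_\infty$ for the right-hand side, and regard both, for fixed $q$ with $0<|q|<1$, as analytic functions of $z\in\mathbb{C}\setminus\{0\}$.

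First I would establish that $f$ and $P$ satisfy the same first-order $q$-difference equation. For $f$, the index shift $n\mapsto n+1$ in the defining series gives $f(q^2z)=(qz)^{-1}f(z)$. For $P$, the same relation follows by telescoping: replacing $z$ by $q^2z$ advances each of the infinite products $(-zq;q^2)_\infty$ and $(-q/z;q^2)_\infty$ by one factor while $(q^2;q^2)_\infty$ is inert, and a short computation yields $P(q^2z)=(qz)^{-1}P(z)$. Consequently the ratio $h(z):=f(z)/P(z)$ is invariant under $z\mapsto q^2z$.

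Next I would argue that $h$ is constant in $z$. The zeros of $P$ are exactly the points $z=-q^{-(2j+1)}$ and $z=-q^{2j+1}$ for $j\ge 0$. I would check that $f$ vanishes at $z=-q^{-1}$ by pairing the index $n$ with $1-n$ in $f(-q^{-1})=\sum_n(-1)^n q^{n(n-1)}$ (a fixed-point-free involution under which the two terms cancel), and then propagate this vanishing to every zero of $P$ using the functional equation, since these zeros form a single orbit of $z\mapsto q^{\pm 2}z$ through $-q^{-1}$. Hence $h$ extends to an analytic, multiplicatively $q^2$-periodic function on all of $\mathbb{C}\setminus\{0\}$; being bounded on the compact fundamental annulus it is bounded on $\mathbb{C}\setminus\{0\}$, so its singularities at $0$ and $\infty$ are removable and Liouville's theorem forces $h\equiv C(q)$. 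To identify $C(q)$ I would specialize $z=i$: the odd-index terms of $f(i)$ cancel in $\pm$ pairs, leaving $f(i)=\sum_m(-1)^m q^{4m^2}$, which is precisely the series side with $q$ replaced by $q^4$ and $z=-1$. Combined with the elementary product rearrangement $P(i)=(q^4;q^8)_\infty^2(q^8;q^8)_\infty$, which is exactly the product obtained from $P$ at base $q^4$ with $z=-1$, this yields $C(q)=C(q^4)$; iterating $q\mapsto q^4$ and using $C(0)=1$ (both $f$ and $P$ have constant term $1$ as $q\to 0$) gives $C(q)\equiv 1$.

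I expect the main obstacle to be the constancy step, namely verifying that $h$ has no poles (equivalently, that $f$ vanishes at every zero of $P$) so that Liouville applies, together with the determination of the constant; the index shifts and the product telescoping are routine bookkeeping. An alternative that sidesteps complex analysis would start from the finite $q$-binomial theorem $\prod_{j=0}^{N-1}(1+zq^j)=\sum_{n}\qbinom{N}{n}_q q^{\binom{n}{2}}z^n$, apply it to the symmetric product $\prod_{j=1}^{N}(1+zq^{2j-1})(1+z^{-1}q^{2j-1})$, extract the coefficient of each power of $z$ via $q$-Vandermonde, and let $N\to\infty$; this is the route of the cited reference and would be my fallback should a purely algebraic derivation be preferred.
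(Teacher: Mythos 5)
Your argument is correct, but note that the paper does not prove this lemma at all: it is quoted verbatim from Andrews' \emph{The Theory of Partitions} (eq.\ (2.2.10) there), where it is derived from Euler's two limiting forms of the $q$-binomial theorem rather than by the route you take. What you give is the classical ``theta-function'' proof: the functional equations $f(q^2z)=(qz)^{-1}f(z)$ and $P(q^2z)=(qz)^{-1}P(z)$ both check out, the vanishing of $f$ at $-q^{-1}$ via the fixed-point-free involution $n\mapsto 1-n$ (which preserves $n(n-1)$ and flips the sign) is right, and since the zeros $-q^{2m+1}$ of $P$ are simple and form a single $q^{2}$-orbit, the quotient $h=f/P$ is indeed holomorphic, multiplicatively periodic, hence constant. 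Your evaluation at $z=i$ is also correct: $f(i)=\sum_m(-1)^mq^{4m^2}$ and $P(i)=(-q^2;q^4)_\infty(q^2;q^2)_\infty=(q^4;q^8)_\infty^2(q^8;q^8)_\infty$, giving $C(q)=C(q^4)$ and hence $C\equiv 1$ by continuity at $q=0$. The trade-off between the two routes: the paper's citation (and your fallback via the finite product and $q$-Chu--Vandermonde) stays entirely within formal $q$-series and meshes with the toolbox the paper actually uses (Heine transforms, $q$-Gau\ss{}, etc.), whereas your primary argument imports Liouville's theorem but is self-contained and makes the quasi-periodicity structure of the theta function transparent. Two small points worth making explicit if you write this up: the simplicity and distinctness of the zeros of $P$ (needed so that first-order vanishing of $f$ suffices for removability), and the nonvanishing of $P_{q^4}(-1)$ when you divide to get $C(q)=C(q^4)$; both are immediate for $0<|q|<1$ but should be stated.
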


Next, let the \emph{$q$-hypergeometric function ${}_{r}\phi_s$} be defined by
\begin{align*}
	{}_{r}\phi_s\left(\begin{matrix} A_1,A_2,\ldots,A_r\\ B_1,B_2,\ldots,B_s \end{matrix}; q, z\right):=\sum_{n\ge 0} \frac{(A_1,A_2,\ldots,A_r;q)_n \big((-1)^n q^{\binom{n}{2}}\big)^{s-r+1} z^n}{(q,B_1,B_2,\ldots,B_{s};q)_n}.
\end{align*}

The \emph{$q$-binomial theorem} \cite[p.~354, eq.~(II.3)]{GR2004} is as follows:
\begin{lemma}[$q$-Binomial theorem]
	\begin{align}\label{eq:q-binomial}
		{}_{1} \phi_{0} \left(\begin{matrix}
			a\\
			-
		\end{matrix};q,z\right) = \frac{(az;q)_\infty}{(z;q)_\infty}.
	\end{align}
\end{lemma}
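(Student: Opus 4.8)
The plan is to unwind the hypergeometric notation and reduce the claim to a first-order $q$-difference equation. Setting $r=1$ and $s=0$ in the definition of ${}_r\phi_s$, the exponent $s-r+1$ vanishes, so the factor $\bigl((-1)^n q^{\binom{n}{2}}\bigr)^{s-r+1}$ is identically $1$, and the left-hand side collapses to the power series $f(z):=\sum_{n\ge 0}\frac{(a;q)_n}{(q;q)_n}z^n$. Assuming $|q|<1$, I regard $f$ as an analytic function of $z$ for $|z|<1$ and aim to identify it with $F(z):=(az;q)_\infty/(z;q)_\infty$.

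First I would derive the functional equation $(1-z)f(z)=(1-az)f(qz)$. Substituting the series and comparing the coefficient of $z^n$ on both sides reduces this to the elementary identity $\frac{(a;q)_n}{(q;q)_n}-\frac{(a;q)_{n-1}}{(q;q)_{n-1}}=q^n\frac{(a;q)_n}{(q;q)_n}-aq^{n-1}\frac{(a;q)_{n-1}}{(q;q)_{n-1}}$, which follows at once from the one-step recurrences $(a;q)_n=(1-aq^{n-1})(a;q)_{n-1}$ and $(q;q)_n=(1-q^n)(q;q)_{n-1}$. Next I would verify that the product side satisfies the \emph{same} equation: using $(az;q)_\infty=(1-az)(aqz;q)_\infty$ and the analogous factorization of $(z;q)_\infty$, one computes $F(z)/F(qz)=(1-az)/(1-z)$, and clearly $F(0)=1=f(0)$.

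It then remains to argue uniqueness. Any solution $g$ that is analytic near $0$ with $g(0)=1$ has Taylor coefficients $A_n$ forced recursively by $A_n(1-q^n)=A_{n-1}(1-aq^{n-1})$, whence $A_n=\frac{(a;q)_n}{(q;q)_n}$; thus both $f$ and $F$ have these coefficients and coincide. Equivalently, one may iterate the functional equation to get $f(z)=\prod_{j=0}^{m-1}\frac{1-azq^j}{1-zq^j}\,f(zq^m)$ and pass to the limit $m\to\infty$.

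The only point requiring genuine care is this last step, namely that the $q$-difference equation together with the normalization at $z=0$ pins down the coefficients uniquely. This is precisely where the hypothesis $|q|<1$ enters: it guarantees both that the series defining $f$ converges and that, in the iterated form, $zq^m\to 0$ so that $f(zq^m)\to f(0)=1$ and the finite product converges to the infinite product $F(z)$. Everything else is routine coefficient bookkeeping.
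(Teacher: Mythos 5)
Your proof is correct. Note that the paper does not prove this lemma at all: it is quoted verbatim from Gasper--Rahman \cite[p.~354, eq.~(II.3)]{GR2004} as a prerequisite, so there is no in-paper argument to compare against. Your route --- unwinding the ${}_{1}\phi_{0}$ notation (correctly observing that the factor $\bigl((-1)^nq^{\binom{n}{2}}\bigr)^{s-r+1}$ is trivial when $s-r+1=0$), establishing the first-order $q$-difference equation $(1-z)f(z)=(1-az)f(qz)$ on both the sum and product sides, and then pinning the solution down either by the coefficient recursion $A_n(1-q^n)=A_{n-1}(1-aq^{n-1})$ or by iterating and letting $zq^m\to 0$ --- is the standard textbook proof, and every step checks out: the coefficient identity you reduce to is exactly the one-step Pochhammer recurrence, the product side is analytic and nonvanishing on $|z|<1$ since its poles sit at $z=q^{-j}$ with $|q^{-j}|\ge 1$, and the normalization $f(0)=F(0)=1$ together with $q^n\ne 1$ (guaranteed by $|q|<1$) makes the recursion uniquely solvable. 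You correctly flag that $|q|<1$ is where convergence of both the series and the infinite product, as well as the limit $f(zq^m)\to 1$, is used.
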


We also require the \emph{$q$-Gau\ss{} sum} \cite[p.~354, eq.~(II.8)]{GR2004}:
\begin{lemma}[$q$-Gau\ss{} sum]
	\begin{align}\label{eq:qGauss}
		{}_{2} \phi_{1} \left(\begin{matrix}
			a,b\\
			c
		\end{matrix};q,\frac{c}{ab}\right) = \frac{(c/a,c/b;q)_\infty}{(c,c/(ab);q)_\infty}.
	\end{align}
\end{lemma}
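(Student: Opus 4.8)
The plan is to derive the $q$-Gauss sum purely from the $q$-binomial theorem \eqref{eq:q-binomial}, which is the tool already in hand; three applications of it, interleaved with a single interchange of summation, will suffice. Throughout I would work under the convergence hypotheses $|q|<1$, $|b|<1$, and $|c/(ab)|<1$, so that every series in sight converges absolutely; the resulting identity between analytic functions of the parameters $a,b,c$ then holds in full generality by analytic continuation, and in particular for the formal specializations $q\mapsto \mathbb{L}^{\pm 1}$ used later in the paper.

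First I would rewrite the quotient of Pochhammer symbols in the summand. Using $(x;q)_n=(x;q)_\infty/(xq^n;q)_\infty$ one has
\begin{align*}
	\frac{(b;q)_n}{(c;q)_n}=\frac{(b;q)_\infty}{(c;q)_\infty}\cdot\frac{(cq^n;q)_\infty}{(bq^n;q)_\infty},
\end{align*}
and the $q$-binomial theorem \eqref{eq:q-binomial}, applied with base point $bq^n$ and parameter $c/b$, expands the last factor as $\sum_{k\ge 0}\frac{(c/b;q)_k}{(q;q)_k}b^k q^{nk}$. Substituting this into the left-hand side of \eqref{eq:qGauss} and interchanging the (absolutely convergent) sums over $n$ and $k$, the inner sum over $n$ becomes $\sum_{n\ge 0}\frac{(a;q)_n}{(q;q)_n}\bigl(\tfrac{c}{ab}q^k\bigr)^n$, which is again evaluated by \eqref{eq:q-binomial}, this time to $(cq^k/b;q)_\infty/(cq^k/(ab);q)_\infty$.

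The remaining task is bookkeeping on infinite products. I would use the splitting identities $(c/b;q)_k\,(cq^k/b;q)_\infty=(c/b;q)_\infty$ and $1/(cq^k/(ab);q)_\infty=(c/(ab);q)_k/(c/(ab);q)_\infty$ to pull every $k$-independent product out front; what survives is the single series $\sum_{k\ge 0}\frac{(c/(ab);q)_k}{(q;q)_k}b^k$. A third and final appeal to \eqref{eq:q-binomial} evaluates this to $(c/a;q)_\infty/(b;q)_\infty$, after which the stray $(b;q)_\infty$ cancels and collecting the prefactors yields exactly the right-hand side $(c/a,c/b;q)_\infty/(c,c/(ab);q)_\infty$.

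I expect the only genuinely delicate point to be the justification of the interchange of the double summation, which rests on absolute convergence in the stated parameter range; the rest is the routine telescoping of $q$-shifted factorials via $(x;q)_\infty=(x;q)_k(xq^k;q)_\infty$. An alternative route would be to invoke Heine's ${}_2\phi_1$ transformation and pass to a limiting value, or to establish a contiguity relation in $c$ and iterate, but the triple use of \eqref{eq:q-binomial} above is the most economical given the prerequisites already assembled.
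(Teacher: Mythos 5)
Your argument is correct, and all three invocations of the $q$-binomial theorem \eqref{eq:q-binomial} check out: the expansion of $(cq^n;q)_\infty/(bq^n;q)_\infty$ with parameter $c/b$ and argument $bq^n$, the evaluation of the inner $n$-sum at argument $cq^k/(ab)$, and the final summation of $\sum_{k\ge 0}(c/(ab);q)_k b^k/(q;q)_k$ to $(c/a;q)_\infty/(b;q)_\infty$ are each legitimate, the telescoping identities $(c/b;q)_k(cq^k/b;q)_\infty=(c/b;q)_\infty$ and $1/(cq^k/(ab);q)_\infty=(c/(ab);q)_k/(c/(ab);q)_\infty$ are standard, and the interchange of summation is justified by absolute convergence for $|b|<1$, $|c/(ab)|<1$, $|q|<1$, with the general case following by analytic continuation. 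Note, however, that the paper offers no proof of this lemma at all: it is stated as a prerequisite and cited directly from Gasper and Rahman \cite[p.~354, eq.~(II.8)]{GR2004}, so there is no internal argument to compare against. What you have reconstructed is essentially Heine's classical derivation of the $q$-Gauss sum from the $q$-binomial theorem, which is self-contained given the tools the paper has already assembled; the alternative routes you mention (Heine's transformations plus a limit, or contiguity relations) would work too, but your choice is the most economical and is the standard textbook proof.
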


The \emph{first $q$-Chu--Vandermonde sum} \cite[p.~354, eq.~(II.7)]{GR2004} is a specialization:

\begin{lemma}[First $q$-Chu--Vandermonde sum]
	For any nonnegative integer $N$,
	\begin{align}\label{eq:qCV-1}
		{}_{2} \phi_{1} \left(\begin{matrix}
			a,q^{-N}\\
			c
		\end{matrix};q,\frac{cq^N}{a}\right) = \frac{(c/a;q)_N}{(c;q)_N}.
	\end{align}
\end{lemma}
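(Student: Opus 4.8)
The plan is to obtain this identity as a direct specialization of the $q$-Gau\ss{} sum \eqref{eq:qGauss}, which the paper has just recorded, so no independent argument is really needed. First I would set $b = q^{-N}$ in \eqref{eq:qGauss}. On the left-hand side this turns the argument $c/(ab)$ into $cq^N/a$, producing exactly the ${}_2\phi_1$ appearing in \eqref{eq:qCV-1}; moreover, since $(q^{-N};q)_n$ vanishes for $n > N$, the series terminates after the $n = N$ term, so that the identity becomes an equality of rational functions in $a$, $c$, and $q$ with no question of convergence on the sum side.

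Next I would simplify the right-hand side $\dfrac{(c/a, c/b; q)_\infty}{(c, c/(ab); q)_\infty}$ under the substitution $b = q^{-N}$. The two products involving $b$ become $(c/b;q)_\infty = (cq^N;q)_\infty$ and $(c/(ab);q)_\infty = (cq^N/a;q)_\infty$. The key observation is the elementary shift identity $(xq^N;q)_\infty = (x;q)_\infty/(x;q)_N$, which follows at once from the definition by splitting the product $\prod_{j\ge 0}(1-xq^j)$ at the index $j = N$. Applying this with $x = c$ and with $x = c/a$ lets me cancel the infinite factors $(c;q)_\infty$ and $(c/a;q)_\infty$ against one another, leaving precisely $(c/a;q)_N/(c;q)_N$, as claimed.

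There is essentially no hard step here; the only point requiring a word of care is the legitimacy of substituting $b = q^{-N}$ into \eqref{eq:qGauss}, whose stated form tacitly presumes a convergence condition such as $|c/(ab)| < 1$. This is harmless: both sides of \eqref{eq:qGauss} are analytic in $b$ in a neighbourhood of $q^{-N}$ (the left-hand side being a terminating polynomial there), so the identity persists at $b = q^{-N}$ by continuity. Alternatively, one could bypass \eqref{eq:qGauss} altogether and prove \eqref{eq:qCV-1} from scratch by expanding $(az;q)_\infty/(z;q)_\infty$ via the $q$-binomial theorem \eqref{eq:q-binomial} and matching coefficients of $z^n$ in a suitably factored product, but the specialization route above is by far the most economical given what the paper already has in hand.
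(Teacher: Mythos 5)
Your derivation is correct and is exactly the route the paper intends: it introduces \eqref{eq:qCV-1} with the remark that it ``is a specialization'' of the $q$-Gau\ss{} sum \eqref{eq:qGauss}, and your substitution $b = q^{-N}$ together with the shift identity $(xq^N;q)_\infty = (x;q)_\infty/(x;q)_N$ supplies precisely the details of that specialization. The computation checks out, and your remark about termination of the series at $n=N$ correctly disposes of the convergence issue.
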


We then recall \emph{Heine's three transformations} \cite[p.~359, eqs.~(III.1--3)]{GR2004} for ${}_{2} \phi_{1}$ series:

\begin{lemma}[Heine's transformations]
	\begin{align}
		{}_{2} \phi_{1} \left(\begin{matrix}
			a,b\\
			c
		\end{matrix};q,z\right) &= \frac{(b,az;q)_\infty}{(c,z;q)_\infty} {}_{2} \phi_{1} \left(\begin{matrix}
			c/b,z\\
			az
		\end{matrix};q,b\right),\label{eq:Heine1}\\
		{}_{2} \phi_{1} \left(\begin{matrix}
			a,b\\
			c
		\end{matrix};q,z\right) &= \frac{(c/b,bz;q)_\infty}{(c,z;q)_\infty} {}_{2} \phi_{1} \left(\begin{matrix}
			abz/c,b\\
			bz
		\end{matrix};q,\frac{c}{b}\right),\label{eq:Heine2}\\
		{}_{2} \phi_{1} \left(\begin{matrix}
			a,b\\
			c
		\end{matrix};q,z\right) &= \frac{(abz/c;q)_\infty}{(z;q)_\infty} {}_{2} \phi_{1} \left(\begin{matrix}
			c/a,c/b\\
			c
		\end{matrix};q,\frac{abz}{c}\right).\label{eq:Heine3}
	\end{align}
\end{lemma}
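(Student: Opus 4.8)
The plan is to establish the first transformation \eqref{eq:Heine1} from scratch using only the $q$-binomial theorem \eqref{eq:q-binomial}, and then to deduce \eqref{eq:Heine2} and \eqref{eq:Heine3} from it by iteration, exploiting the manifest symmetry ${}_2\phi_1(a,b;c;q,z)={}_2\phi_1(b,a;c;q,z)$ that is visible from the factor $(a;q)_n(b;q)_n$ in the defining series. Since all three identities may be read as equalities of formal power series in $z$ with coefficients in $\mathbb{Q}(q,a,b,c)$ (equivalently, as convergent expansions under $|q|,|z|,|b|<1$), it suffices to prove \eqref{eq:Heine1}; the remaining two are then pure bookkeeping with $q$-Pochhammer symbols.

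For \eqref{eq:Heine1}, I would begin from the series
\begin{align*}
{}_2\phi_1(a,b;c;q,z)=\sum_{n\ge 0}\frac{(a;q)_n(b;q)_n}{(q;q)_n(c;q)_n}z^n,
\end{align*}
and rewrite the ratio of Pochhammer symbols as $\frac{(b;q)_n}{(c;q)_n}=\frac{(b;q)_\infty}{(c;q)_\infty}\cdot\frac{(cq^n;q)_\infty}{(bq^n;q)_\infty}$. The factor $\frac{(cq^n;q)_\infty}{(bq^n;q)_\infty}$ is exactly of the shape handled by \eqref{eq:q-binomial} based at $bq^n$, so it expands as $\sum_{m\ge0}\frac{(c/b;q)_m}{(q;q)_m}(bq^n)^m$. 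Substituting this and interchanging the order of summation leaves an inner sum over $n$ equal to $\sum_{n\ge0}\frac{(a;q)_n}{(q;q)_n}(zq^m)^n={}_1\phi_0(a;-;q,zq^m)=\frac{(azq^m;q)_\infty}{(zq^m;q)_\infty}$, again by \eqref{eq:q-binomial}. The final simplification $\frac{(azq^m;q)_\infty}{(zq^m;q)_\infty}=\frac{(az;q)_\infty}{(z;q)_\infty}\cdot\frac{(z;q)_m}{(az;q)_m}$ pulls out the prefactor and collapses the surviving $m$-sum into ${}_2\phi_1(c/b,z;az;q,b)$, which is precisely \eqref{eq:Heine1}.

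To obtain \eqref{eq:Heine2}, I would apply \eqref{eq:Heine1} a second time, but to the reflected series ${}_2\phi_1(z,c/b;az;q,b)$ gotten by swapping the two numerator parameters in the right-hand side of \eqref{eq:Heine1}; the two prefactors telescope, canceling the $(b;q)_\infty$ and $(az;q)_\infty$ factors and leaving exactly $\frac{(c/b,bz;q)_\infty}{(c,z;q)_\infty}$ times ${}_2\phi_1(abz/c,b;bz;q,c/b)$. Running the same maneuver once more—reflecting and then applying \eqref{eq:Heine1} to the right-hand side of \eqref{eq:Heine2}—yields \eqref{eq:Heine3}, the $q$-analogue of Euler's transformation. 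Conceptually, \eqref{eq:Heine1} is an involution on the parameter tuple $(a,b,c,z)\mapsto(c/b,z,az,b)$, and combining it with the $a\leftrightarrow b$ symmetry generates the full orbit of three transformations.

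The only genuine subtlety, and hence the step I would treat most carefully, is the legitimacy of interchanging the two infinite summations. This is harmless as an identity of formal power series in $z$, each coefficient being a finite sum, and analytically it is justified by absolute convergence under $|q|<1$, $|z|<1$, $|b|<1$. I would also track the splitting $\frac{(b;q)_n}{(c;q)_n}=\frac{(b;q)_\infty}{(c;q)_\infty}\cdot\frac{(cq^n;q)_\infty}{(bq^n;q)_\infty}$ and the reindexing with care, so that no spurious singularities at $c=q^{-j}$ are introduced; everything else reduces to routine manipulation of $q$-Pochhammer symbols.
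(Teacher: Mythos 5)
The paper does not prove this lemma at all: it is quoted verbatim from Gasper--Rahman \cite[p.~359, eqs.~(III.1--3)]{GR2004}, so there is no in-paper argument to compare against. Your proposal supplies the standard textbook derivation (essentially the one in Gasper--Rahman), and it is correct: the splitting $\frac{(b;q)_n}{(c;q)_n}=\frac{(b;q)_\infty}{(c;q)_\infty}\cdot\frac{(cq^n;q)_\infty}{(bq^n;q)_\infty}$, the two applications of the $q$-binomial theorem, and the final regrouping do produce \eqref{eq:Heine1} exactly, and the iteration scheme you describe checks out: applying \eqref{eq:Heine1} to ${}_2\phi_1(z,c/b;az;q,b)$ gives the prefactor $\frac{(c/b,bz;q)_\infty}{(az,b;q)_\infty}$, whose product with $\frac{(b,az;q)_\infty}{(c,z;q)_\infty}$ telescopes to $\frac{(c/b,bz;q)_\infty}{(c,z;q)_\infty}$ as in \eqref{eq:Heine2}, and one further application yields \eqref{eq:Heine3}. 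Your one caveat deserves its stated care: the right-hand side of \eqref{eq:Heine1} is a series in powers of $b$, not of $z$, so the ``formal power series in $z$'' reading does not directly apply to the target identity; the clean justification of the double-sum interchange is absolute convergence for $|q|,|z|,|b|<1$ (which you also invoke), followed by analytic continuation where needed. With that understood, the argument is complete.
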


Finally, the following transform for ${}_{3} \phi_{2}$ series \cite[p.~359, eq.~(III.9)]{GR2004} is necessary:

\begin{lemma}
	\begin{align}\label{eq:3phi2}
		{}_{3} \phi_{2} \left(\begin{matrix}
			a,b,c\\
			d,e
		\end{matrix};q,\frac{de}{abc}\right) = \frac{(e/a,de/(bc);q)_\infty}{(e,de/(abc);q)_\infty} {}_{3} \phi_{2} \left(\begin{matrix}
			a,d/b,d/c\\
			d,de/(bc)
		\end{matrix};q,\frac{e}{a}\right).
	\end{align}
\end{lemma}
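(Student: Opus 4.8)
The statement \eqref{eq:3phi2} is the classical $q$-analogue of Thomae's ${}_{3}F_{2}$ transformation (a non-terminating Sears-type relation), and in principle it may simply be quoted from \cite{GR2004}. Nevertheless, the plan is to indicate how it can be assembled entirely from the two tools listed just above it, namely the $q$-binomial theorem \eqref{eq:q-binomial} and Heine's transformations \eqref{eq:Heine1}--\eqref{eq:Heine3}. Writing $z:=de/(abc)$, the first move is to expand the tail of the quotient $(c;q)_n/(e;q)_n=\frac{(c;q)_\infty}{(e;q)_\infty}\cdot\frac{(eq^{n};q)_\infty}{(cq^{n};q)_\infty}$ by the $q$-binomial theorem \eqref{eq:q-binomial}, thereby introducing an auxiliary summation index $k$; after interchanging the order of summation, the inner sum over $n$ collapses to the series ${}_{2}\phi_{1}\!\left(a,b;d;q,zq^{k}\right)$.

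The decisive step is to apply Heine's third transformation \eqref{eq:Heine3} to this inner ${}_{2}\phi_{1}$. Here the specific value $z=de/(abc)$ enters crucially through the identity $abz/d=e/c$: it forces the transformed argument to be $(e/c)q^{k}$, with the $k$-dependence confined to a harmless power of $q$, and turns the parameters into $d/a,d/b$ over $d$. One then telescopes the resulting $q$-Pochhammer factors (the factor $(e/c;q)_{k}$ merges with $((e/c)q^{k};q)_\infty$ into $(e/c;q)_\infty$, and $(zq^{k};q)_\infty^{-1}$ becomes $(z;q)_{k}/(z;q)_\infty$), expands the new ${}_{2}\phi_{1}\!\left(d/a,d/b;d;q,(e/c)q^{k}\right)$ as a series, and interchanges the summations once more. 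A second application of the $q$-binomial theorem \eqref{eq:q-binomial} evaluates the freed-up geometric-type sum over $k$, after which the remaining single series is exactly a ${}_{3}\phi_{2}$ with numerator parameters $c,d/a,d/b$ over $d,cz$ and argument $e/c$. Invoking the manifest symmetry of the left-hand ${}_{3}\phi_{2}$ in $a,b,c$ (equivalently, relabelling $a\leftrightarrow c$ and using the symmetry of the new numerator pair) recasts this into the form displayed in \eqref{eq:3phi2}.

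I expect the only genuine obstacle to be organizational rather than conceptual: one must track four separate families of $q$-Pochhammer prefactors through two expansions, two interchanges of summation, and the Heine step, and then confirm that they coalesce into precisely $\frac{(e/a,de/(bc);q)_\infty}{(e,de/(abc);q)_\infty}$. The two points deserving real care are, first, the legitimacy of each term-by-term interchange, which is underwritten for $|q|<1$ by absolute convergence once the argument is fixed at $z=de/(abc)$, and second, the cancellation $abz/d=e/c$ that makes Heine's transformation produce a clean $k$-independent argument; this is exactly the feature that confines the transformation to this distinguished value of the argument and explains why no free-parameter analogue is available.
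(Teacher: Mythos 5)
Your proposal is correct, and it supplies a proof where the paper gives none: the lemma is simply quoted from Gasper--Rahman \cite[p.~359, eq.~(III.9)]{GR2004}, so there is no ``paper's proof'' to compare against beyond the citation. Your outline is the standard derivation of (III.9), and the key computations check out. Writing $z=de/(abc)$ and expanding $(cq^n;q)_\infty^{-1}(eq^n;q)_\infty=\sum_{k\ge 0}\frac{(e/c;q)_k}{(q;q)_k}(cq^n)^k$ via \eqref{eq:q-binomial}, the inner sum over $n$ is ${}_2\phi_1(a,b;d;q,zq^k)$; applying \eqref{eq:Heine3} uses precisely $abz/d=e/c$, so the prefactor $((e/c)q^k;q)_\infty/(zq^k;q)_\infty$ combines with $(e/c;q)_k$ to give the $k$-independent $(e/c;q)_\infty(z;q)_k/(z;q)_\infty$; the second application of \eqref{eq:q-binomial} evaluates $\sum_k\frac{(z;q)_k}{(q;q)_k}(cq^m)^k=(cz;q)_\infty(c;q)_m/\bigl((c;q)_\infty(cz;q)_m\bigr)$, and since $cz=de/(ab)$ one lands on $\frac{(e/c,de/(ab);q)_\infty}{(e,z;q)_\infty}\,{}_3\phi_2(c,d/a,d/b;d,de/(ab);q,e/c)$, which becomes \eqref{eq:3phi2} after the relabelling $a\leftrightarrow c$ that you indicate. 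The convergence caveats you flag ($|de/(abc)|<1$ and $|e/a|<1$ to justify the interchanges and the nonterminating $q$-binomial expansions) are exactly the hypotheses under which (III.9) is stated, and are harmless here since the paper only ever applies the lemma in terminating or limiting ($\tau\to 0$) situations. No gaps.
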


\section{Iteration seed toward Theorem \ref{th:multi-sum-infty}}

Our objective here is to prove Theorem \ref{th:multi-sum-infty} by offering its finitization. To begin with, we establish a simple $q$-hypergeometric transform.

\begin{lemma}
	For any nonnegative integers $M$ and $N$,
	\begin{align}\label{eq:trans-1}
		\sum_{n\ge 0} \frac{a^n q^{n^2 + Mn}}{(q;q)_{N-n} (q;q)_n (aq;q)_{M+n}} = \frac{1}{(q;q)_N (aq;q)_{M+N}}.
	\end{align}
\end{lemma}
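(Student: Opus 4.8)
The goal is to prove the identity
\begin{align*}
	\sum_{n\ge 0} \frac{a^n q^{n^2 + Mn}}{(q;q)_{N-n} (q;q)_n (aq;q)_{M+n}} = \frac{1}{(q;q)_N (aq;q)_{M+N}},
\end{align*}
which is a finite sum since the factor $1/(q;q)_{N-n}$ vanishes for $n>N$. The plan is to recognize the left-hand side as a terminating ${}_2\phi_1$ series and then apply the first $q$-Chu--Vandermonde sum \eqref{eq:qCV-1}. First I would extract the $n=0$ normalization by writing the summand's denominator factors in terms of Pochhammer ratios anchored at $n=0$: using $(q;q)_{N-n} = (q;q)_N / \big((-1)^n q^{Nn - \binom{n}{2}} (q^{-N};q)_n\big)$ and $(aq;q)_{M+n} = (aq;q)_{M+n}$, so that pulling out the constant $1/\big((q;q)_N (aq;q)_M\big)$ leaves the ratios $(q^{-N};q)_n$, $(aq^{M+1};q)_n$ in the appropriate places.

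Concretely, I would rewrite the sum as
\begin{align*}
	\frac{1}{(q;q)_N (aq;q)_M} \sum_{n\ge 0} \frac{(q^{-N};q)_n}{(q;q)_n (aq^{M+1};q)_n} \, a^n q^{n^2 + Mn} (-1)^n q^{Nn - \binom{n}{2}},
\end{align*}
and then simplify the pure power-of-$q$ and sign contribution $a^n q^{n^2+Mn}(-1)^n q^{Nn-\binom{n}{2}}$. Since $n^2 - \binom{n}{2} = \binom{n+1}{2} = \binom{n}{2}+n$, the exponent collapses so that the remaining $z^n$-type factor becomes $\big(aq^{M+N+1}\big)^n$, matching the argument slot of a ${}_2\phi_1$ with upper parameters $a \mapsto aq^{M+1}$ and $q^{-N}$, lower parameter $c \mapsto aq^{M+1}$—wait, I must instead track the parameters carefully so the series reads ${}_2\phi_1\!\left(\begin{smallmatrix} aq^{M+1},\, q^{-N}\\ aq^{M+1}\end{smallmatrix}; q, z\right)$ is degenerate; the honest identification is a ${}_2\phi_1$ whose lower parameter $c$ and argument $z$ are chosen to make $z = cq^N/a'$ with $a' = aq^{M+1}$, i.e. the Chu--Vandermonde balancing condition holds.

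The main obstacle, and the step I would be most careful about, is getting the parameter dictionary exactly right so that the argument of the resulting ${}_2\phi_1$ equals $cq^N/a'$ as required by \eqref{eq:qCV-1}; a single misplaced power of $q$ in the $q^{n^2}$ versus $q^{\binom{n}{2}}$ bookkeeping would break the balancing. Once the series is identified as ${}_2\phi_1\!\left(\begin{smallmatrix} q^{-N},\, 0\\ aq^{M+1}\end{smallmatrix}; q, \cdot\right)$—note one upper parameter is genuinely $0$ because there is no $(\,\cdot\,;q)_n$ factor beyond $(q^{-N};q)_n$ in the numerator—the appropriate specialization of the $q$-Chu--Vandermonde sum (with $a \to 0$, which turns \eqref{eq:qCV-1} into the $q$-analogue $\sum_n (q^{-N};q)_n z^n/(q;q)_n(c;q)_n$ evaluating to a ratio of Pochhammers) yields $(aq^{M+1};q)_N$-type factors. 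Finally I would collect the prefactor $1/\big((q;q)_N(aq;q)_M\big)$ with the Chu--Vandermonde output and verify, using $(aq;q)_M (aq^{M+1};q)_N = (aq;q)_{M+N}$, that the product telescopes to $1/\big((q;q)_N (aq;q)_{M+N}\big)$, completing the proof.
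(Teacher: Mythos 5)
Your overall strategy is exactly the paper's: extract $1/(q;q)_{N-n}=(-1)^nq^{Nn-\binom{n}{2}}(q^{-N};q)_n/(q;q)_N$, recognize a terminating basic hypergeometric series, and finish with the first $q$-Chu--Vandermonde sum \eqref{eq:qCV-1} together with $(aq;q)_M(aq^{M+1};q)_N=(aq;q)_{M+N}$. However, there is a concrete error at the identification step. After the extraction the residual factor is
\begin{align*}
a^nq^{n^2+Mn}\cdot(-1)^nq^{Nn-\binom{n}{2}}=(-1)^nq^{\binom{n}{2}}\big(aq^{M+N+1}\big)^n,
\end{align*}
not $\big(aq^{M+N+1}\big)^n$ as you claim: the sign and the $q^{\binom{n}{2}}$ do not cancel. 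Consequently the series is \emph{not} a ${}_2\phi_1$ with an upper parameter equal to $0$ (which, in the paper's convention, carries no $(-1)^nq^{\binom{n}{2}}$), but rather a ${}_1\phi_1$, i.e.\ a ${}_2\phi_1$ with an upper parameter sent to \emph{infinity}. This matters: the series you actually wrote down, $\sum_n (q^{-N};q)_n(aq^{M+N+1})^n/\big((q;q)_n(aq^{M+1};q)_n\big)$, violates the balancing condition $z=cq^{N}/a$ of \eqref{eq:qCV-1} (with an upper parameter $0$ the required argument would be infinite) and does not evaluate to $1/(aq^{M+1};q)_N$; for instance at $M=0$, $N=1$ it equals $(1-2aq)/(1-aq)$ rather than $1/(1-aq)$. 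So the step you yourself flagged as the dangerous one is precisely where the argument breaks.

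The repair is the paper's limiting device: write the extra upper parameter as $1/\tau$ and the argument as $aq^{M+N+1}\tau$, so that $\tau^n(1/\tau;q)_n\to(-1)^nq^{\binom{n}{2}}$ reproduces the missing factor; the balancing $z=cq^{N}/a$ holds identically in $\tau$, \eqref{eq:qCV-1} gives $(aq^{M+1}\tau;q)_N/(aq^{M+1};q)_N$, and letting $\tau\to0$ yields $1/(aq^{M+1};q)_N$. With that correction your concluding telescoping $(aq;q)_M(aq^{M+1};q)_N=(aq;q)_{M+N}$ completes the proof as intended.
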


\begin{proof}
	We have
	\begin{align*}
		\LHS\eqref{eq:trans-1} &= \frac{1}{(aq;q)_M} \sum_{n=0}^{N} \frac{a^n q^{n^2 + Mn}}{(q;q)_{N-n} (q;q)_n (aq^{M+1};q)_n}\\
		& = \frac{1}{(q;q)_{N} (aq;q)_M} \sum_{n=0}^{N} \frac{a^n q^{n^2 + Mn}\cdot (-1)^n q^{-\binom{n}{2}+Nn} (q^{-N};q)_n}{(q;q)_n (aq^{M+1};q)_n}\\
		& = \frac{1}{(q;q)_{N} (aq;q)_M} \lim_{\tau\to 0} {}_{2}\phi_{1} \left(\begin{matrix}
			1/\tau,q^{-N}\\
			aq^{M+1}
		\end{matrix};q,aq^{M+N+1}\tau\right)\\
		\text{\tiny (by \eqref{eq:qCV-1})}& = \frac{1}{(q;q)_{N} (aq;q)_M} \lim_{\tau\to 0} \frac{(aq^{M+1}\tau;q)_{N}}{(aq^{M+1};q)_{N}}\\
		& = \frac{1}{(q;q)_{N} (aq;q)_{M+N}},
	\end{align*}
	as claimed.
\end{proof}

Now we show that \eqref{eq:trans-1} serves as an iteration seed. Let us start by reformulating it as
\begin{align*}
	\frac{1}{(q;q)_N (aq;q)_{(M'+M'')+N}} &= \sum_{L\ge 0} \frac{a^{L} q^{L^2 + (M'+M'')L} (q;q)_{L+M'}}{(q;q)_{N-L} (q;q)_L}\\
	&\quad\times \frac{1}{(q;q)_{L+M'} (aq;q)_{M''+(L+M')}}.
\end{align*}
Then we may as well apply \eqref{eq:trans-1} to
\begin{align*}
	\frac{1}{(q;q)_{L+M'} (aq;q)_{M''+(L+M')}}
\end{align*}
in the summand. Repeating this process $k$ times, we arrive at the following finite version of Theorem \ref{th:multi-sum-infty}.

\begin{theorem}
	For any nonnegative integers $d_1,\ldots,d_k$ and $N$,
	\begin{align}\label{eq:multi-sum-N}
		&\frac{1}{(q;q)_{N} (aq;q)_{N+d_1+\cdots+d_k}}\notag\\
		&\qquad = \sum_{n_k=0}^N \sum_{n_{k-1}=0}^{n_k+d_k}\cdots \sum_{n_1=0}^{n_2+d_2} \frac{a^{\sum_{i=1}^k n_i}q^{\sum_{i=1}^k n_i^2+\sum_{i=1}^k (d_1+\cdots+d_i)n_i}}{(q;q)_{N-n_k}(q;q)_{n_k-n_{k-1}+d_k} \cdots (q;q)_{n_2-n_1+d_2}}\notag\\
		&\qquad\quad\times \frac{(q;q)_{n_k+d_k}\cdots (q;q)_{n_2+d_2}}{(q;q)_{n_k} \cdots (q;q)_{n_1} (aq;q)_{n_1+d_1}}.
	\end{align}
\end{theorem}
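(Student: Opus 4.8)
**The plan is to prove the finite identity \eqref{eq:multi-sum-N} by induction on $k$, using \eqref{eq:trans-1} as the iteration seed exactly as the paragraph preceding the theorem suggests.** The base case $k=0$ (with the convention that the multiple sum collapses to a single term with no summation indices) is simply the trivial identity $1/\big((q;q)_N (aq;q)_N\big) = 1/\big((q;q)_N (aq;q)_N\big)$; alternatively one may take $k=1$ as the base, in which case \eqref{eq:multi-sum-N} reads
\begin{align*}
	\frac{1}{(q;q)_N (aq;q)_{N+d_1}} = \sum_{n_1=0}^N \frac{a^{n_1} q^{n_1^2 + d_1 n_1}}{(q;q)_{N-n_1}(q;q)_{n_1}(aq;q)_{n_1+d_1}},
\end{align*}
which is precisely \eqref{eq:trans-1} with the substitution $M \mapsto d_1$. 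So the seed lemma already furnishes the starting point.

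For the inductive step, I would assume \eqref{eq:multi-sum-N} holds for $k-1$ folds and seek to produce the $k$-fold version. The mechanism is the reformulation displayed just before the theorem: the key observation is that the factor
\begin{align*}
	\frac{1}{(q;q)_{L+M'} (aq;q)_{M''+(L+M')}}
\end{align*}
appearing in a summand has exactly the shape of the left-hand side of \eqref{eq:trans-1} with $N \mapsto L+M'$ and $M \mapsto M''$. The strategy is therefore to single out the innermost summation variable of the $(k-1)$-fold identity, rewrite the corresponding Pochhammer factor using \eqref{eq:trans-1} read in reverse (as the expansion $1/\big((q;q)_N(aq;q)_{M+N}\big) = \sum_n \cdots$), and collect the resulting double sum. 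The bookkeeping is driven by the identification $M' = d_1 + \cdots + d_{i-1}$ and $M'' = d_i$ at the $i$-th stage, so that the telescoping of partial sums $d_1 + \cdots + d_i$ in the exponent of $q$ and in the subscripts $(q;q)_{n_i + d_i}$ emerges automatically.

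The bulk of the work is verifying that the exponents and Pochhammer subscripts match after one application of \eqref{eq:trans-1}. Concretely, when I expand $1/\big((q;q)_{n_2-n_1+d_2}^{\,\cdots}\big)$-type factors, I must confirm three things: first, that the quadratic exponent $\sum n_i^2$ is reproduced (the $n^2$ in the seed supplies the new square term); second, that the linear exponent correctly accumulates as $\sum_i (d_1 + \cdots + d_i) n_i$ (here the $Mn$ term in \eqref{eq:trans-1} contributes the running total $d_1 + \cdots + d_{i-1}$ to the coefficient of $n_{i-1}$); and third, that the ratio of Pochhammer symbols $(q;q)_{n_i + d_i}/\big((q;q)_{n_i - n_{i-1} + d_i}(q;q)_{n_i}\big)$ telescopes consistently with the $(q;q)_{L+M'}$ factor created and cancelled at each step. \textbf{The main obstacle I anticipate is purely organizational rather than conceptual}: keeping the shifting indices straight across the iteration, in particular ensuring that the newly introduced summation variable $n_1$ inherits the correct upper limit $n_2 + d_2$ (from the seed's range $0 \le n \le N$ with $N \mapsto L + M' = n_2 + d_1 + \cdots + d_{k-1}$, after the index relabeling that peels off $d_1$). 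Once the single-step index matching is pinned down cleanly, the induction closes without any further analytic input, since every step is an exact application of the already-proven seed \eqref{eq:trans-1}.
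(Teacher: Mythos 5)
Your proposal is correct and is precisely the paper's argument: the paper proves \eqref{eq:trans-1} and then obtains \eqref{eq:multi-sum-N} by exactly the iteration you describe, with the $k=1$ case being \eqref{eq:trans-1} under $M\mapsto d_1$. The one bookkeeping point to fix is that your roles of $M'$ and $M''$ are swapped: at the stage introducing the next inner variable one splits the running total $d_1+\cdots+d_i$ so that the \emph{single} increment $M'=d_i$ is absorbed into the new summation range (whence $n_{i-1}$ runs up to $n_i+d_i$ and the inserted factor is $(q;q)_{n_i+d_i}$), while the partial sum $M''=d_1+\cdots+d_{i-1}$ survives as the new ``$M$'' of the seed — so the upper limit for $n_1$ comes from $N\mapsto n_2+d_2$, not $N\mapsto n_2+d_1+\cdots+d_{k-1}$ as written.
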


Finally, we are in a position to prove Theorem \ref{th:multi-sum-infty}.

\begin{proof}[Proof of Theorem \ref{th:multi-sum-infty}]
	Let $N\to \infty$ in \eqref{eq:multi-sum-N}. Noting the fact that $1/(q;q)_n = 0$ whenever $n<0$, we may loosen the conditions of the indices in \eqref{eq:multi-sum-N} and see that
	\begin{align*}
		\lim_{N\to\infty} \RHS\eqref{eq:multi-sum-N} = \frac{1}{(q;q)_\infty} \RHS\eqref{eq:multi-sum-infty}.
	\end{align*}
	Meanwhile,
	\begin{align*}
		\lim_{N\to\infty} \LHS\eqref{eq:multi-sum-N} = \frac{1}{(q;q)_\infty (aq;q)_\infty},
	\end{align*}
	thereby implying the desired result.
\end{proof}

\section{Theorem \ref{th:S-m-infty} and its finitization}

We warm up with a proof of Theorem \ref{th:S-m-infty} by means of Theorem \ref{th:multi-sum-infty}.

\begin{proof}[Proof of Theorem \ref{th:S-m-infty}]
	We open the $q$-binomial coefficients and see that
	\begin{align*}
		\cZ_k(1,q) &= \sum_{\substack{r_k\ge \cdots\ge r_1\ge 0\\s_k\ge \cdots\ge s_1\ge 0}} \frac{q^{\sum_{i=1}^k (r_i^2-r_is_i+s_i^2)}}{(q;q)_{r_{k}-r_{k-1}}\cdots (q;q)_{r_2-r_1}(q;q)_{r_1}^2(q;q)_{s_1}}\notag\\
		&\quad\times \frac{(q;q)_{r_k-s_{k-1}}\cdots (q;q)_{r_2-s_1} (q;q)_{r_1}}{(q;q)_{r_{k}-s_{k}}\cdots(q;q)_{r_1-s_1} (q;q)_{s_k-s_{k-1}}\cdots (q;q)_{s_2-s_1} (q;q)_{s_1}}\\
		&= \sum_{\substack{r_k\ge \cdots\ge r_1\ge 0\\s_k\ge \cdots\ge s_1\ge 0}} \frac{q^{\sum_{i=1}^k (r_i^2-r_is_i+s_i^2)}}{(q;q)_{r_{k}-r_{k-1}}\cdots (q;q)_{r_2-r_1}(q;q)_{r_1}}\notag\\
		&\quad\times \frac{(q;q)_{(r_k-s_k)+(s_k-s_{k-1})}\cdots (q;q)_{(r_2-s_2)+(s_2-s_1)}}{(q;q)_{r_{k}-s_{k}}\cdots(q;q)_{r_1-s_1} (q;q)_{s_k-s_{k-1}}\cdots (q;q)_{s_2-s_1} (q;q)_{s_1}^2}.
	\end{align*}
	Now for $1\le i\le k$, we put
	\begin{align}\label{eq:di-def}
		d_i := \begin{cases}
			s_1, & i=1,\\
			s_i-s_{i-1}, & i\ge 2.
		\end{cases}
	\end{align}
	Making the change of variables for each $1\le j\le k$:
	\begin{align}\label{eq:ni-def}
		n_j := r_j - s_j,
	\end{align}
	we find that $\cZ_k(1,q)$ equals
	\begin{align*}
		&\sum_{s_1,\ldots,s_k\ge 0} \frac{q^{\sum_{i=1}^k s_i^2}}{(q;q)_{s_2-s_1}\cdots (q;q)_{s_k-s_{k-1}} (q;q)_{s_1}^2}\\
		&\times \sum_{n_1,\ldots,n_k\ge 0}\frac{q^{\sum_{i=1}^k n_i^2+\sum_{i=1}^k (d_1+\cdots+d_i)n_i}(q;q)_{n_k+d_k}\cdots (q;q)_{n_2+d_2}}{(q;q)_{n_k-n_{k-1}+d_k}\cdots (q;q)_{n_2-n_1+d_2} (q;q)_{n_k} \cdots (q;q)_{n_1} (q;q)_{n_1+d_1}},
	\end{align*}
	where we have loosened the conditions for the sums by using the vanishing of $1/(q;q)_n$ whenever $n<0$. Applying \eqref{eq:multi-sum-infty} with $a=1$ to the inner sum gives
	\begin{align*}
		\cZ_k(1,q) = \frac{1}{(q;q)_\infty}\sum_{s_1,\ldots,s_k\ge 0} \frac{q^{\sum_{i=1}^k s_i^2}}{(q;q)_{s_k-s_{k-1}}\cdots (q;q)_{s_2-s_1} (q;q)_{s_1}^2},
	\end{align*}
	which further yields \eqref{eq:S-m-infty} in view of the same reasoning.
\end{proof}

In addition, it is notable that the finite version of Theorem \ref{th:multi-sum-infty}, namely, the identity \eqref{eq:multi-sum-N}, at the same time implies a finitization of Theorem \ref{th:S-m-infty}.

\begin{theorem}\label{th:S-1-finite}
	For any nonnegative integer $N$,
	\begin{align}\label{eq:S-1-finite}
		\tcZ_k(N;1,q) = \frac{1}{(q;q)_N^3}.
	\end{align}
\end{theorem}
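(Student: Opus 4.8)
The plan is to finitize the proof of Theorem~\ref{th:S-m-infty} essentially verbatim, replacing its single use of the infinite identity \eqref{eq:multi-sum-infty} by \emph{two} applications of the finite counterpart \eqref{eq:multi-sum-N}. Setting $t=1$ kills all powers of $t$ and turns $(tq;q)_{r_1}^2$ into $(q;q)_{r_1}^2$, so $\tcZ_k(N;1,q)$ is literally $\cZ_k(1,q)$ with the extra factor $1/(q;q)_{N-r_k}$ inserted. First I would open the $q$-binomial coefficients exactly as in the proof of Theorem~\ref{th:S-m-infty}; the cancellations are identical and produce
\begin{align*}
	\tcZ_k(N;1,q) &= \sum_{\substack{r_k\ge\cdots\ge r_1\ge0\\ s_k\ge\cdots\ge s_1\ge0}} \frac{q^{\sum_{i=1}^k(r_i^2-r_is_i+s_i^2)}}{(q;q)_{N-r_k}(q;q)_{r_k-r_{k-1}}\cdots(q;q)_{r_2-r_1}(q;q)_{r_1}}\\
	&\quad\times \frac{(q;q)_{r_k-s_{k-1}}\cdots(q;q)_{r_2-s_1}}{(q;q)_{r_k-s_k}\cdots(q;q)_{r_1-s_1}(q;q)_{s_k-s_{k-1}}\cdots(q;q)_{s_2-s_1}(q;q)_{s_1}^2},
\end{align*}
which is precisely the intermediate expression from that proof, bearing the lone new factor $1/(q;q)_{N-r_k}$.

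Next I would carry out the same change of variables $n_j:=r_j-s_j$ together with the $d_i$ of \eqref{eq:di-def}--\eqref{eq:ni-def}, under which $s_i=d_1+\cdots+d_i$ and the quadratic form becomes $r_i^2-r_is_i+s_i^2=n_i^2+(d_1+\cdots+d_i)n_i+s_i^2$. The crucial bookkeeping point is that the new factor reads $1/(q;q)_{N-r_k}=1/(q;q)_{(N-s_k)-n_k}$, so the inner sum over $n_1,\ldots,n_k$ is exactly the right-hand side of \eqref{eq:multi-sum-N} with $a=1$ and with $N$ replaced by $N-s_k$. Applying \eqref{eq:multi-sum-N} (after loosening the summation ranges via the vanishing of $1/(q;q)_m$ for $m<0$, just as in the infinite proof) collapses the inner sum to $1/\big((q;q)_{N-s_k}(q;q)_{(N-s_k)+s_k}\big)=1/\big((q;q)_{N-s_k}(q;q)_N\big)$, since $d_1+\cdots+d_k=s_k$.

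This leaves the single residual factor $1/(q;q)_N$ times
\[
\sum_{s_1,\ldots,s_k\ge0} \frac{q^{\sum_{i=1}^k s_i^2}}{(q;q)_{N-s_k}(q;q)_{s_k-s_{k-1}}\cdots(q;q)_{s_2-s_1}(q;q)_{s_1}^2}.
\]
I would then recognize this $s$-sum as a \emph{second} instance of \eqref{eq:multi-sum-N}, now with $a=1$ and $d_1=\cdots=d_k=0$: after the numerator factors $(q;q)_{n_i}$ cancel against the denominator, the right-hand side of \eqref{eq:multi-sum-N} is precisely the displayed sum with $s_i\leftrightarrow n_i$, and it evaluates to $1/(q;q)_N^2$. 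Multiplying the two pieces gives $\tcZ_k(N;1,q)=1/(q;q)_N^3$, which is \eqref{eq:S-1-finite}.

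The only genuine obstacle is the bookkeeping in the first application: one must track the shift $N\mapsto N-s_k$ correctly and confirm that the nested upper limits in \eqref{eq:multi-sum-N} may be relaxed to free summation, with the factor $1/(q;q)_{(N-s_k)-n_k}$ enforcing $n_k\le N-s_k$ and the denominators $(q;q)_{(n_i-n_{i-1})+d_i}$ enforcing the remaining inequalities. Once this is verified, the finite identity applies verbatim, and everything else is a line-by-line transcription of the proof of Theorem~\ref{th:S-m-infty}.
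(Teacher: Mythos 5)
Your proposal is correct and matches the paper's own proof essentially line for line: the same opening of the $q$-binomial coefficients, the same substitutions \eqref{eq:di-def}--\eqref{eq:ni-def}, a first application of \eqref{eq:multi-sum-N} with $a=1$ and $N\mapsto N-s_k$ to collapse the inner sum to $1/\big((q;q)_{N-s_k}(q;q)_N\big)$, and a second application with $d_1=\cdots=d_k=0$ to evaluate the residual $s$-sum as $1/(q;q)_N^2$. The bookkeeping points you flag (the shift $N\mapsto N-s_k$ and the relaxation of the nested summation ranges) are handled exactly as you describe.
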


\begin{proof}
	Similar to how the proof of Theorem \ref{th:S-m-infty} has been proceeded, we have the simplification:
	\begin{align*}
		\tcZ_k(N;1,q) &= \sum_{s_1,\ldots,s_k\ge 0} \frac{q^{\sum_{i=1}^k s_i^2}}{(q;q)_{s_k-s_{k-1}}\cdots (q;q)_{s_2-s_1} (q;q)_{s_1}^2}\\
		&\quad\times \sum_{n_1,\ldots,n_k\ge 0} \frac{q^{\sum_{i=1}^k n_i^2+\sum_{i=1}^k (d_1+\cdots+d_i)n_i}}{(q;q)_{(N-s_k)-n_k} (q;q)_{n_k-n_{k-1}+d_k}\cdots (q;q)_{n_2-n_1+d_2}}\\
		&\quad\times \frac{(q;q)_{n_k+d_k}\cdots (q;q)_{n_2+d_2}}{(q;q)_{n_k} \cdots (q;q)_{n_1} (q;q)_{n_1+d_1}},
	\end{align*}
	where we have still used the substitutions \eqref{eq:di-def} and \eqref{eq:ni-def}. Noting that $d_1+\cdots+d_k = s_k$, we apply \eqref{eq:multi-sum-N} to simplify the inner sum over $n_1,\ldots,n_k$ as
	\begin{align*}
		\frac{1}{(q;q)_{N}(q;q)_{N-s_k}}.
	\end{align*}
	It follows that
	\begin{align*}
		\tcZ_k(N;1,q) = \frac{1}{(q;q)_N}\sum_{s_1,\ldots,s_k\ge 0} \frac{q^{\sum_{i=1}^k s_i^2}}{(q;q)_{N-s_k}(q;q)_{s_k-s_{k-1}}\cdots (q;q)_{s_2-s_1} (q;q)_{s_1}^2}.
	\end{align*}
	Applying \eqref{eq:multi-sum-N} with $d_1=\cdots = d_k = 0$ further gives
	\begin{align*}
		\tcZ_k(N;1,q) = \frac{1}{(q;q)_N} \cdot \frac{1}{(q;q)_N^2},
	\end{align*}
	which is as desired.
\end{proof}

\section{Reformulating $\tcZ_k(N;t,q)$}

To achieve the $k$-fold sum for $\tcZ_k(N;t,q)$ in \eqref{eq:Z-expression}, our first step is to reformulate it to a form that aligns with the $2k$-fold sum in \eqref{eq:strange-A2-sum}. We begin with
\begin{align*}
	\tcZ_k(N;t,q)&= \sum_{\substack{r_k\ge \cdots\ge r_1\ge 0\\s_k\ge \cdots\ge s_1\ge 0}} \frac{t^{\sum_{i=1}^k (2r_i-s_i)} q^{\sum_{i=1}^k (r_i^2-r_is_i+s_i^2)}}{(q;q)_{N-r_k}(q;q)_{r_{k}-r_{k-1}}\cdots (q;q)_{r_2-r_1}(tq;q)_{r_1}^2(q;q)_{s_1}}\notag\\
	&\quad\times \qbinom{r_k-s_{k-1}}{r_k-s_k}_q\qbinom{r_{k-1}-s_{k-2}}{r_{k-1}-s_{k-1}}_q\cdots \qbinom{r_{2}-s_1}{r_{2}-s_{2}}_q\qbinom{r_1}{r_1-s_1}_q.
\end{align*}
By opening the $q$-binomial coefficients and reorganizing the $q$-factorials, the above can be reformulated as
\begin{align*}
	\tcZ_k(N;t,q)&= \sum_{\substack{r_k\ge \cdots\ge r_1\ge 0\\s_k\ge \cdots\ge s_1\ge 0}} \frac{t^{\sum_{i=1}^k (2r_i-s_i)} q^{\sum_{i=1}^k (r_i^2-r_is_i+s_i^2)} (q;q)_{r_1}}{(q;q)_{N-r_k}(q;q)_{r_{k}-s_{k}}(q;q)_{s_k}(tq;q)_{r_1}^2(q;q)_{s_1}}\notag\\
	&\quad\times \qbinom{s_k}{s_{k-1}}_q\cdots \qbinom{s_{2}}{s_{1}}_q\qbinom{r_k-s_{k-1}}{r_{k-1}-s_{k-1}}_q\cdots \qbinom{r_{2}-s_1}{r_1-s_{1}}_q.
\end{align*}
Invoking the substitutions for $1\le i\le k$:
\begin{align*}
	n_j := r_j - s_j,
\end{align*}
we further have
\begin{align}\label{eq:Zk-middle}
	\tcZ_k(N;t,q)&= \sum_{\substack{s_1,\ldots, s_k\ge 0\\n_1,\ldots,n_k\ge 0}} \frac{t^{\sum_{i=1}^k (s_i+2n_i)} q^{\sum_{i=1}^k (s_i^2+s_in_i+n_i^2)} (q;q)_{n_1+s_1}}{(q;q)_{N-s_k-n_k}(q;q)_{s_k}(q;q)_{n_{k}}(q;q)_{s_1}(tq;q)_{n_1+s_1}^2}\notag\\
	&\quad\times \qbinom{s_k}{s_{k-1}}_q\cdots \qbinom{s_{2}}{s_{1}}_q\qbinom{n_k+s_k-s_{k-1}}{n_{k-1}}_q\cdots \qbinom{n_{2}+s_2-s_1}{n_{1}}_q.
\end{align}

Now we work on the sums over $n_1,\ldots,n_k$:
\begin{align*}
	\Sigma &:= \sum_{n_1,\ldots,n_k\ge 0} \frac{t^{\sum_{i=1}^k 2n_i} q^{\sum_{i=1}^k (n_i^2+s_in_i)} (q;q)_{n_1+s_1}}{(q;q)_{(N-s_k)-n_k}(q;q)_{n_{k}}(tq;q)_{n_1+s_1}^2}\\
	&\ \quad\times\qbinom{n_k+s_k-s_{k-1}}{n_{k-1}}_q\cdots \qbinom{n_{2}+s_2-s_1}{n_{1}}_q.
\end{align*}
Let us single out the sum over $n_1$:
\begin{align*}
	\Sigma &= \frac{1}{(q;q)_{N-s_k}} \sum_{n_2,\ldots,n_k\ge 0} t^{\sum_{i=2}^k 2n_i} q^{\sum_{i=2}^k (n_i^2+s_in_i)}\\
	&\quad\times \qbinom{N-s_k}{n_k}_q\qbinom{n_k+s_k-s_{k-1}}{n_{k-1}}_q\cdots \qbinom{n_{3}+s_3-s_2}{n_{2}}_q\\
	&\quad\times (q;q)_{n_2+s_2-s_1} \sum_{n_1\ge 0} \frac{t^{2n_1}q^{n_1^2+s_1n_1} (q;q)_{n_1+s_1}}{(q;q)_{(n_2+s_2-s_1)-n_1}(q;q)_{n_{1}}(tq;q)_{n_1+s_1}^2}.
\end{align*}
To simplify this sum over $n_1$, we require a basic hypergeometric transform.

\begin{lemma}
	For any nonnegative integers $M$ and $N$,
	\begin{align}\label{eq:trans-2}
		&\sum_{n\ge 0} \frac{a^{2n} q^{n^2 + Mn} (q;q)_{M+n}}{(q;q)_{N-n} (q;q)_n (aq;q)_{M+n}^2} = \frac{(q;q)_\infty (a^2q;q)_\infty}{(aq;q)_\infty^2 (q;q)_N} \sum_{n\ge 0} \frac{q^{(M+1)n} (a;q)_{n}^2}{(q;q)_{n} (a^2q;q)_{M+N+n}}.
	\end{align}
\end{lemma}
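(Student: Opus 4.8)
The plan is to read both sides of \eqref{eq:trans-2} as basic hypergeometric series and to reduce each, using only the transformations collected in Section~2, to one and the same terminating ${}_{2}\phi_{1}$.

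First I would put the left-hand side into hypergeometric form. Writing $(q;q)_{M+n}=(q;q)_M(q^{M+1};q)_n$ and $(aq;q)_{M+n}=(aq;q)_M(aq^{M+1};q)_n$, together with the standard reversal
\[
\frac{1}{(q;q)_{N-n}}=\frac{(-1)^n q^{Nn-\binom{n}{2}}(q^{-N};q)_n}{(q;q)_N},
\]
and the identity $n^2-\binom{n}{2}=\binom{n}{2}+n$, the summand collapses to $(-1)^n q^{\binom{n}{2}}(a^2q^{M+N+1})^n$ times a ratio of Pochhammer symbols, so that
\[
\LHS\eqref{eq:trans-2}=\frac{(q;q)_M}{(aq;q)_M^2(q;q)_N}\,{}_{2}\phi_{2}\!\left(\begin{matrix}q^{M+1},q^{-N}\\ aq^{M+1},aq^{M+1}\end{matrix};q,a^2q^{M+N+1}\right).
\]
Similarly, splitting $(a^2q;q)_{M+N+n}=(a^2q;q)_{M+N}(a^2q^{M+N+1};q)_n$ turns the right-hand side into
\[
\RHS\eqref{eq:trans-2}=\frac{(q;q)_\infty(a^2q;q)_\infty}{(aq;q)_\infty^2(q;q)_N(a^2q;q)_{M+N}}\,{}_{2}\phi_{1}\!\left(\begin{matrix}a,a\\ a^2q^{M+N+1}\end{matrix};q,q^{M+1}\right).
\]

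The key step is to convert the terminating ${}_{2}\phi_{2}$ with repeated lower parameter into a ${}_{2}\phi_{1}$. For this I would invoke the confluent $c\to\infty$ limit of the ${}_{3}\phi_{2}$ transformation \eqref{eq:3phi2} (keeping one upper parameter equal to $q^{-N}$, so that the source series is a \emph{finite} sum and the limit is taken termwise), which gives
\[
{}_{2}\phi_{2}\!\left(\begin{matrix}A,B\\ D,E\end{matrix};q,\tfrac{DE}{AB}\right)=\frac{(E/A;q)_\infty}{(E;q)_\infty}\,{}_{2}\phi_{1}\!\left(\begin{matrix}A,D/B\\ D\end{matrix};q,\tfrac{E}{A}\right).
\]
Specializing $(A,B,D,E)=(q^{M+1},q^{-N},aq^{M+1},aq^{M+1})$ and then applying Heine's transformation \eqref{eq:Heine3} (equivalently \eqref{eq:Heine1}) reduces the left-hand side to
\[
\frac{(q;q)_M}{(q;q)_N(aq;q)_M(aq;q)_{M+N}}\,{}_{2}\phi_{1}\!\left(\begin{matrix}q^{-N},a\\ aq^{M+1}\end{matrix};q,aq^{M+N+1}\right),
\]
once the factor $(a;q)_\infty$ cancels and $(aq^{M+N+1};q)_\infty/(aq^{M+1};q)_\infty=1/(aq^{M+1};q)_N=(aq;q)_M/(aq;q)_{M+N}$.

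On the right-hand side I would instead apply Heine's second transformation \eqref{eq:Heine2} to ${}_{2}\phi_{1}\!\left(\begin{smallmatrix}a,a\\ a^2q^{M+N+1}\end{smallmatrix};q,q^{M+1}\right)$; this produces \emph{exactly} the same terminating series ${}_{2}\phi_{1}\!\left(\begin{smallmatrix}q^{-N},a\\ aq^{M+1}\end{smallmatrix};q,aq^{M+N+1}\right)$, while the attached infinite products simplify, after the cancellations $(a^2q;q)_\infty/(a^2q^{M+N+1};q)_\infty=(a^2q;q)_{M+N}$ and $(q;q)_\infty/(q^{M+1};q)_\infty=(q;q)_M$, to the identical scalar $\tfrac{(q;q)_M}{(q;q)_N(aq;q)_M(aq;q)_{M+N}}$. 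Matching the two reductions then yields \eqref{eq:trans-2}. The main obstacle I anticipate is twofold: isolating the correct confluent limit of \eqref{eq:3phi2} that converts the repeated-denominator ${}_{2}\phi_{2}$ into a ${}_{2}\phi_{1}$, and the careful bookkeeping of the several infinite products so that the prefactors on both sides telescope to precisely the same constant; the remaining hypergeometric conversions and Heine applications are routine once the common ${}_{2}\phi_{1}$ is identified.
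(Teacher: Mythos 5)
Your proposal is correct and follows essentially the same route as the paper: both hinge on the confluent form of the ${}_3\phi_2$ transformation \eqref{eq:3phi2} followed by a Heine transformation, the paper running the chain directly from left to right while you meet in the middle at the terminating series ${}_{2}\phi_{1}\left(\begin{smallmatrix}q^{-N},\,a\\ aq^{M+1}\end{smallmatrix};q,aq^{M+N+1}\right)$, your extra application of \eqref{eq:Heine3} merely compensating for a different pairing of parameters in the confluent limit. The only caveat is that your intermediate nonterminating ${}_2\phi_1$ with argument $a$ requires $|a|<1$, to be removed afterwards by analytic continuation, a device the paper itself uses elsewhere.
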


\begin{proof}
	We have
	\begin{align*}
		\LHS\eqref{eq:trans-2} &= \frac{(q;q)_M}{(q;q)_N(aq;q)_M^2} \sum_{n\ge 0} \frac{(-1)^n a^{2n} q^{\binom{n}{2}+(M+N+1)n} (q^{-N};q)_n (q^{M+1};q)_n}{(q;q)_n (aq^{M+1};q)_n^2}\\
		&= \frac{(q;q)_M}{(q;q)_N(aq;q)_M^2} \lim_{\tau\to 0} {}_{3}\phi_{2} \left(\begin{matrix}
			q^{-N},1/\tau,q^{M+1}\\
			aq^{M+1},aq^{M+1}
		\end{matrix};q,a^2q^{M+N+1}\tau\right)\\
		\text{\tiny (by \eqref{eq:3phi2})} &= \frac{(q;q)_M}{(q;q)_N(aq;q)_M^2} \lim_{\tau\to 0} \frac{(aq^{M+N+1},a^2q^{M+1}\tau;q)_\infty}{(aq^{M+1},a^2q^{M+N+1}\tau;q)_\infty} \\
		&\quad\times {}_{3}\phi_{2} \left(\begin{matrix}
			q^{-N},aq^{M+1}\tau,a\\
			aq^{M+1},a^2q^{M+1}\tau
		\end{matrix};q,aq^{M+N+1}\right)\\
		&= \frac{(q;q)_M}{(q;q)_N (aq;q)_M (aq;q)_{M+N}} {}_{2}\phi_{1} \left(\begin{matrix}
			q^{-N},a\\
			aq^{M+1}
		\end{matrix};q,aq^{M+N+1}\right)\\
		\text{\tiny (by \eqref{eq:Heine2})}&= \frac{(q;q)_M}{(q;q)_N (aq;q)_M (aq;q)_{M+N}} \frac{(q^{M+1},a^2q^{M+N+1};q)_\infty}{(aq^{M+1},aq^{M+N+1};q)_\infty}\\
		&\quad\times {}_{2}\phi_{1} \left(\begin{matrix}
			a,a\\
			a^2q^{M+N+1}
		\end{matrix};q,q^{M+1}\right)\\
		&= \frac{(q;q)_\infty (a^2q;q)_\infty}{(aq;q)_\infty^2 (q;q)_N (a^2q;q)_{M+N}} \sum_{n\ge 0} \frac{q^{(M+1)n} (a;q)_{n}^2}{(q;q)_{n}(a^2q^{M+N+1};q)_{n}}\\
		&= \frac{(q;q)_\infty (a^2q;q)_\infty}{(aq;q)_\infty^2 (q;q)_N} \sum_{n\ge 0} \frac{q^{(M+1)n} (a;q)_{n}^2}{(q;q)_{n} (a^2q;q)_{M+N+n}},
	\end{align*}
	as claimed.
\end{proof}

It follows by applying \eqref{eq:trans-2} to the previous sum over $n_1$ that
\begin{align*}
	\Sigma &= \frac{(q;q)_\infty (t^2q;q)_\infty}{(tq;q)_\infty^2(q;q)_{N-s_k}} \sum_{n_2,\ldots,n_k\ge 0} t^{\sum_{i=2}^k 2n_i}  q^{\sum_{i=2}^k (n_i^2+s_in_i)} \\
	&\quad\times \qbinom{N-s_k}{n_k}_q \qbinom{n_k+s_k-s_{k-1}}{n_{k-1}}_q\cdots \qbinom{n_{3}+s_3-s_2}{n_{2}}_q\\
	&\quad\times \sum_{n_1\ge 0} \frac{q^{n_1+s_1n_1} (t;q)_{n_1}^2}{(q;q)_{n_{1}}(t^2q;q)_{n_1+n_2+s_2}}.
\end{align*}
Interchanging the sum over $n_1$ and the remaining sums gives
\begin{align*}
	\Sigma &= \frac{(q;q)_\infty (t^2q;q)_\infty}{(tq;q)_\infty^2(q;q)_{N-s_k}} \sum_{n_1\ge 0} \frac{q^{n_1+s_1n_1} (t;q)_{n_1}^2}{(q;q)_{n_{1}}} \sum_{n_2,\ldots,n_k\ge 0} \frac{t^{\sum_{i=2}^k 2n_i} q^{\sum_{i=2}^k (n_i^2+s_in_i)}}{(t^2q;q)_{n_1+n_2+s_2}}\\
	&\quad\times  \qbinom{N-s_k}{n_k}_q\qbinom{n_k+s_k-s_{k-1}}{n_{k-1}}_q\cdots \qbinom{n_{3}+s_3-s_2}{n_{2}}_q.
\end{align*}

Our next trick relies on a slight extension of a transform due to Warnaar \cite[p.~746, Lemma 7.2]{War2023}.

\begin{lemma}
	Let $m_0$ be a nonnegative integer and let $u_1\le u_2\le \cdots \le u_{k+1}$ be integers. We have, for any $\ell\in \{0,1,\ldots,k\}$,
	\begin{align}
		&\sum_{m_1,\ldots,m_k\ge 0} \frac{t^{\sum_{i=1}^k m_i}q^{\sum_{i=1}^k m_i(m_i+u_i)}}{(tq;q)_{m_k+u_{k+1}}} \prod_{i=1}^k \qbinom{m_{i-1}}{m_i}_q\notag\\
		&\quad = \sum_{m_1,\ldots,m_k\ge 0} \frac{t^{\sum_{i=1}^k m_i}q^{\sum_{i=1}^k m_i(m_i+u_i)}}{(tq;q)_{m_\ell+m_{\ell+1}+u_{\ell+1}}} \prod_{i=1}^\ell \qbinom{m_{i-1}}{m_i}_q \prod_{i=\ell+1}^k \qbinom{m_{i+1}+u_{i+1}-u_i}{m_i}_q,
	\end{align}
	where $m_{k+1}:=0$.
\end{lemma}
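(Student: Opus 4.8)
The plan is to prove the asserted equality for every $\ell$ at once by showing that the right-hand side, call it $S_\ell$, is independent of $\ell$, and then identifying $S_k$ with the stated left-hand side. Since $m_{k+1}=0$, at $\ell=k$ the denominator factor $(tq;q)_{m_\ell+m_{\ell+1}+u_{\ell+1}}$ collapses to $(tq;q)_{m_k+u_{k+1}}$ and the second product is empty, so $S_k$ \emph{is} the left-hand side. It therefore suffices to prove the telescoping relation $S_\ell=S_{\ell-1}$ for each $\ell\in\{1,\dots,k\}$, giving $\mathrm{LHS}=S_k=S_{k-1}=\dots=S_0$.

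For a fixed $\ell$ I would freeze every summation index except $m_\ell$ and compare $S_\ell$ and $S_{\ell-1}$ summand by summand; all sums are finite (each $q$-binomial truncates its lower index), so this iterated rearrangement is legitimate. The prefactors $t^{m_i}q^{m_i(m_i+u_i)}$ agree for all $i$, and one checks that the $q$-binomials agree for every $i\neq\ell$ (in particular at the two boundary indices $i=\ell\pm1$). The only discrepancies are the single binomial at $i=\ell$ — which is $\qbinom{m_{\ell-1}}{m_\ell}_q$ in $S_\ell$ and $\qbinom{m_{\ell+1}+u_{\ell+1}-u_\ell}{m_\ell}_q$ in $S_{\ell-1}$ — and the single denominator factor, $(tq;q)_{m_\ell+m_{\ell+1}+u_{\ell+1}}$ versus $(tq;q)_{m_{\ell-1}+m_\ell+u_\ell}$. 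After cancelling the common factors, $S_\ell=S_{\ell-1}$ reduces, for each choice of the frozen variables, to the single-variable transform
\begin{equation*}
	\sum_{m\ge 0}\frac{t^m q^{m(m+p)}}{(tq;q)_{m+y+p}}\qbinom{x}{m}_q=\sum_{m\ge 0}\frac{t^m q^{m(m+p)}}{(tq;q)_{m+x+p}}\qbinom{y}{m}_q,
\end{equation*}
applied with $p=u_\ell$, $x=m_{\ell-1}$ and $y=m_{\ell+1}+u_{\ell+1}-u_\ell$; note that the hypothesis $u_\ell\le u_{\ell+1}$ is precisely what keeps $y\ge0$, so the right-hand binomial tops stay nonnegative.

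This symmetry in $x\leftrightarrow y$ of $F(x,y):=\sum_m t^m q^{m(m+p)}\qbinom{x}{m}_q/(tq;q)_{m+y+p}$ is the heart of the matter and the step I expect to be the main obstacle; it is the ``slight extension of Warnaar's Lemma 7.2'' alluded to in the text. To establish it I would split $(tq;q)_{m+y+p}=(tq;q)_{y+p}(tq^{y+p+1};q)_m$ and use $\qbinom{x}{m}_q=(-1)^m q^{xm-\binom{m}{2}}(q^{-x};q)_m/(q;q)_m$, which recasts $F(x,y)$ as the terminating series
\begin{equation*}
	F(x,y)=\frac{1}{(tq;q)_{y+p}}\,{}_{1}\phi_{1}\!\left(\begin{matrix}q^{-x}\\ tq^{y+p+1}\end{matrix};q,\,tq^{1+p+x}\right).
\end{equation*}
The required symmetry then interchanges the terminating numerator parameter $q^{-x}$ with the offset $y$ hidden in the base $tq^{y+p+1}$ of the denominator; I would extract it by realizing this ${}_{1}\phi_{1}$ as a confluent limit of a ${}_{2}\phi_{1}$ and applying one of Heine's transformations \eqref{eq:Heine1}--\eqref{eq:Heine3}, or, after the reversal $m\mapsto x-m$, the first $q$-Chu--Vandermonde sum \eqref{eq:qCV-1}, in each case tracking the resulting powers of $q$ so the two sides become manifestly interchangeable. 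The delicate point is the degeneration of several Pochhammer factors in the limit $b\to\infty$, so I would prefer to run the transformation at the terminating level to avoid indeterminate forms; low cases such as $F(1,2)=F(2,1)$ serve as a useful sanity check.

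Once $F(x,y)=F(y,x)$ is in hand, substituting it back into each frozen summand yields $S_\ell=S_{\ell-1}$, and the induction closes the proof.
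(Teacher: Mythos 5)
Your proposal is correct and follows essentially the same route as the paper: your telescoping reduction $S_\ell=S_{\ell-1}$ is exactly the induction in Warnaar's Lemma~7.2 that the paper invokes, and your key single-variable symmetry $F(x,y)=F(y,x)$ is precisely the paper's extended identity $\sum_{m\ge 0} t^mq^{m(m+p)}\qbinom{n_2-p}{m}_q/(tq;q)_{m+n_1}=\sum_{m\ge 0} t^mq^{m(m+p)}\qbinom{n_1-p}{m}_q/(tq;q)_{m+n_2}$ under $n_1=y+p$, $n_2=x+p$, which the paper derives from the symmetric transformation $(c;q)_\infty\,{}_1\phi_1(a;c;q,z)=(z;q)_\infty\,{}_1\phi_1(az/c;z;q,c)$ --- the same classical transformation your confluent-Heine route yields. (Only your fallback suggestion of the $q$-Chu--Vandermonde \emph{evaluation} after reversal is off the mark, since $F(x,y)$ is not a closed product; your primary route is sound.)
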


\begin{proof}
	The proof is almost identical to that for \cite[p.~746, Lemma 7.2]{War2023}. The only modification is that in the following identity \cite[p.~746, above eq.~(7.6)]{War2023}:
	\begin{align*}
		\sum_{k\ge 0} (-z)^k q^{\binom{k}{2}} \frac{(a;q)_k (cq^k;q)_\infty}{(q;q)_k} = \sum_{k\ge 0} (-c)^k q^{\binom{k}{2}} \frac{(az/c;q)_k (zq^k;q)_\infty}{(q;q)_k},
	\end{align*}
	we instead set $(a,c,z)\mapsto (q^{-(n_2-p)},tq^{n_1+1},tq^{n_2+1})$ so as to extend \cite[p.~747, eq.~(7.7)]{War2023} as
	\begin{align*}
		\sum_{m\ge 0} \frac{t^mq^{m(m+p)}}{(tq;q)_{m+n_1}} \qbinom{n_2-p}{m}_q = \sum_{m\ge 0} \frac{t^mq^{m(m+p)}}{(tq;q)_{m+n_2}} \qbinom{n_1-p}{m}_q.
	\end{align*}
	The rest follows by the same induction argument.
\end{proof}

The above lemma tells us that
\begin{align*}
	&\sum_{n_2,\ldots,n_k\ge 0} \frac{t^{\sum_{i=2}^k 2n_i} q^{\sum_{i=2}^k (n_i^2+s_in_i)}}{(t^2q;q)_{n_1+n_2+s_2}} \qbinom{N-s_k}{n_k}_q\qbinom{n_k+s_k-s_{k-1}}{n_{k-1}}_q\cdots \qbinom{n_{3}+s_3-s_2}{n_{2}}_q\\
	&\qquad = \sum_{n_2,\ldots,n_k\ge 0} \frac{t^{\sum_{i=2}^k 2n_i} q^{\sum_{i=2}^k (n_i^2+s_in_i)}}{(t^2q;q)_{N+n_k}} \qbinom{n_1}{n_2}_q \cdots \qbinom{n_{k-1}}{n_k}_q.
\end{align*}
As a consequence,
\begin{align*}
	\Sigma &= \frac{(q;q)_\infty (t^2q;q)_\infty}{(tq;q)_\infty^2(q;q)_{N-s_k}} \sum_{n_1,\ldots,n_k\ge 0} t^{\sum_{i=2}^k 2n_i} q^{n_1+s_1n_1+\sum_{i=2}^k (n_i^2+s_in_i)}\\
	&\quad\times\frac{(t;q)_{n_1}^2}{(q;q)_{n_{1}} (t^2q;q)_{N+n_k}}\qbinom{n_1}{n_2}_q \cdots \qbinom{n_{k-1}}{n_k}_q.
\end{align*}

Finally, substituting the above into \eqref{eq:Zk-middle} and renaming $s_i$ by $m_i$, we are led to the following reformulation of $\tcZ_k(N;t,q)$.

\begin{theorem}
	For any nonnegative integer $N$,
	\begin{align}\label{eq:Z-new}
		\tcZ_k(N;t,q)&= \frac{(q;q)_\infty(t^2q;q)_\infty}{(tq;q)_\infty^2}\notag\\
		&\quad\times \sum_{\substack{m_1,\ldots,m_k\ge 0\\n_1,\ldots,n_k\ge 0}} \frac{t^{-2n_1+\sum_{i=1}^k (m_i+2n_i)} q^{-n_1^2+n_1+\sum_{i=1}^k (m_i^2+m_in_i+n_i^2)} (t;q)_{n_1}^2}{(q;q)_{N-m_k}(t^2q;q)_{N+n_{k}}(q;q)_{m_k}(q;q)_{m_1}(q;q)_{n_1}}\notag\\
		&\quad\times \qbinom{m_k}{m_{k-1}}_q \qbinom{m_{k-1}}{m_{k-2}}_q\cdots \qbinom{m_{2}}{m_{1}}_q \qbinom{n_1}{n_2}_q \cdots \qbinom{n_{k-2}}{n_{k-1}}_q\qbinom{n_{k-1}}{n_k}_q.
	\end{align}
\end{theorem}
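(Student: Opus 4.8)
The plan is to treat this reformulation as the endpoint of the chain of manipulations developed above, carried out in three stages. First I would begin from the definition of $\tcZ_k(N;t,q)$, expand every $q$-binomial coefficient into quotients of $q$-Pochhammer symbols, and regroup the factorials so that the summand factors into an $s$-dependent block carrying the ascending chain $\qbinom{s_k}{s_{k-1}}_q\cdots\qbinom{s_2}{s_1}_q$ and a block recording the interaction of the $r_i$ with the $s_i$. The substitution $n_j:=r_j-s_j$ then turns the order constraint $r_k\ge\cdots\ge r_1\ge 0$ into free summation over $n_1,\ldots,n_k\ge 0$ (using that $1/(q;q)_n$ vanishes for $n<0$), which is exactly the intermediate expression \eqref{eq:Zk-middle}.

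The heart of the argument is the evaluation of the inner sum $\Sigma$ over $n_1,\ldots,n_k$, and this proceeds in two moves. I would first peel off the innermost summation over $n_1$; after merging $(q;q)_{(N-s_k)-n_k}(q;q)_{n_k}$ into $\qbinom{N-s_k}{n_k}_q$ times $1/(q;q)_{N-s_k}$, the $n_1$-sum matches the left-hand side of the transform \eqref{eq:trans-2} with $a=t$, $M=s_1$, and truncation length $n_2+s_2-s_1$. Applying \eqref{eq:trans-2} replaces the balanced $n_1$-sum by a one-sided sum whose summand carries $(t;q)_{n_1}^2$ and the denominator $(t^2q;q)_{n_1+n_2+s_2}$, while extracting the global prefactor $(q;q)_\infty(t^2q;q)_\infty/(tq;q)_\infty^2$. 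After interchanging summations, the residual multisum over $n_2,\ldots,n_k$ is of precisely the type governed by the extension of Warnaar's transform proved above; invoking that lemma with $\ell=0$ and parameters $t\mapsto t^2$, $m_0=n_1$, $u_i=s_{i+1}$ for $1\le i\le k-1$, and $u_k=N$ converts the descending staircase $\qbinom{N-s_k}{n_k}_q\qbinom{n_k+s_k-s_{k-1}}{n_{k-1}}_q\cdots$ into the ascending chain $\qbinom{n_1}{n_2}_q\cdots\qbinom{n_{k-1}}{n_k}_q$ and simultaneously collapses $(t^2q;q)_{n_1+n_2+s_2}$ to $(t^2q;q)_{N+n_k}$.

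Finally I would substitute the simplified $\Sigma$ back into \eqref{eq:Zk-middle} and rename $s_i\mapsto m_i$, which reproduces \eqref{eq:Z-new} after checking that the $t$- and $q$-exponents recombine correctly (the spurious $-n_1^2$ in \eqref{eq:Z-new} is exactly the correction needed so that the $q$-power can be written uniformly as $\sum_i n_i^2$). The step I expect to be the genuine obstacle is recognizing that, once \eqref{eq:trans-2} has acted on the $n_1$-sum, the surviving sum over $n_2,\ldots,n_k$ fits the Warnaar template verbatim: this hinges on the nonobvious parameter dictionary $u_i=s_{i+1}$ together with the boundary choice $u_k=N$, and on verifying the monotonicity $u_1\le\cdots\le u_k$ (which reduces to $s_k\le N$, guaranteed by the factor $1/(q;q)_{N-s_k}$). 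A secondary technical point is the proof of \eqref{eq:trans-2} itself, which rewrites the balanced sum as a terminating ${}_3\phi_2$ and applies the transformation \eqref{eq:3phi2} followed by Heine's transformation \eqref{eq:Heine2}, so some care is needed in tracking the auxiliary $\tau\to 0$ limit that installs the $q^{-N}$ data.
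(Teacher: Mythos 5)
Your proposal follows the paper's own proof essentially step for step: the same expansion and substitution $n_j=r_j-s_j$ leading to \eqref{eq:Zk-middle}, the same application of \eqref{eq:trans-2} with $a=t$, $M=s_1$ to the $n_1$-sum, and the same invocation of the extended Warnaar lemma (read with $\ell=0$, with exactly the parameter dictionary $u_i=s_{i+1}$, $u_k=N$ you describe) to flip the staircase of $q$-binomials and replace $(t^2q;q)_{n_1+n_2+s_2}$ by $(t^2q;q)_{N+n_k}$. The exponent bookkeeping you flag, including the role of the $-n_1^2$ term, also checks out, so the argument is correct and coincides with the paper's.
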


\section{A semi-truncation}

We move on to the following multisum:
\begin{align}
	\cV_k(N;t,q)&:= \sum_{\substack{m_1,\ldots,m_k\ge 0\\n_1,\ldots,n_k\ge 0}} \frac{t^{-2n_1+\sum_{i=1}^k (m_i+2n_i)} q^{-n_1^2+n_1+\sum_{i=1}^k (m_i^2+m_in_i+n_i^2)} (t;q)_{n_1}^2}{(q;q)_{N-m_k}(q;q)_{m_k}(q;q)_{m_1}(q;q)_{n_1}}\notag\\
	&\ \quad\times \qbinom{m_k}{m_{k-1}}_q \qbinom{m_{k-1}}{m_{k-2}}_q\cdots \qbinom{m_{2}}{m_{1}}_q \qbinom{n_1}{n_2}_q \cdots \qbinom{n_{k-2}}{n_{k-1}}_q\qbinom{n_{k-1}}{n_k}_q.
\end{align}
It is notable that only the sums over $m_1,\ldots,m_k$ are finite.

Let us assume that $k\ge 2$.

We start by opening the $q$-binomial coefficients:
\begin{align*}
	\cV_k(N;t,q)&= \sum_{\substack{m_1,\ldots,m_k\ge 0\\n_1,\ldots,n_k\ge 0}} \frac{t^{-2n_1+\sum_{i=1}^k (m_i+2n_i)} q^{-n_1^2+n_1+\sum_{i=1}^k (m_i^2+m_in_i+n_i^2)} (t;q)_{n_1}^2}{(q;q)_{N-m_k}(q;q)_{n_k}(q;q)_{m_1}^2}\notag\\
	&\quad\times \frac{1}{(q;q)_{m_k-m_{k-1}}\cdots (q;q)_{m_2-m_1} (q;q)_{n_1-n_2}\cdots (q;q)_{n_{k-1}-n_k}}.
\end{align*}
Singling out the sums over $m_1,\ldots,m_{k-1}$ and $n_1,\ldots,n_{k-1}$ then gives
\begin{align*}
	\cV_k(N;t,q) &= \sum_{m_k,n_k\ge 0} \frac{t^{m_k+2n_k}q^{m_k^2+m_kn_k+n_k^2}}{(q;q)_{N-m_k}(q;q)_{n_k}}\\
	&\quad\times \sum_{\substack{m_1,\ldots,m_{k-1}\ge 0\\n_1,\ldots,n_{k-1}\ge n_k}} \frac{t^{-2n_1+\sum_{i=1}^{k-1} (m_i+2n_i)} q^{-n_1^2+n_1+\sum_{i=1}^{k-1} (m_i^2+m_in_i+n_i^2)} (t;q)_{n_1}^2}{(q;q)_{m_k-m_{k-1}}(q;q)_{n_{k-1}-n_k}(q;q)_{m_1}^2}\notag\\
	&\quad\times \frac{1}{(q;q)_{m_{k-1}-m_{k-2}}\cdots (q;q)_{m_2-m_1} (q;q)_{n_1-n_2}\cdots (q;q)_{n_{k-2}-n_{k-1}}}.
\end{align*}
Now we make the substitutions for $1\le i\le k-1$:
\begin{align*}
	n_i \mapsto n_i+n_k.
\end{align*}
Then,
\begin{align*}
	&\cV_k(N;t,q)\\
	&\quad = \sum_{m_k,n_k\ge 0} \frac{t^{m_k+2(k-1)n_k}q^{m_k^2+(m_k+1)n_k+(k-1)n_k^2}(t;q)_{n_k}^2}{(q;q)_{N-m_k}(q;q)_{n_k}}\\
	&\quad\quad\times \sum_{\substack{m_1,\ldots,m_{k-1}\ge 0\\n_1,\ldots,n_{k-1}\ge 0}} \frac{(tq^{n_k})^{-2n_1+\sum_{i=1}^{k-1} (m_i+2n_i)} q^{-n_1^2+n_1+\sum_{i=1}^{k-1} (m_i^2+m_in_i+n_i^2)} (tq^{n_k};q)_{n_1}^2}{(q;q)_{m_k-m_{k-1}}(q;q)_{n_{k-1}}(q;q)_{m_1}^2}\notag\\
	&\quad\quad\times \frac{1}{(q;q)_{m_{k-1}-m_{k-2}}\cdots (q;q)_{m_2-m_1} (q;q)_{n_1-n_2}\cdots (q;q)_{n_{k-2}-n_{k-1}}}.
\end{align*}
In other words,
\begin{align}\label{eq:V-rec}
	\cV_k(N;t,q) = \sum_{m,n\ge 0} \frac{t^{m+2(k-1)n}q^{m^2+(m+1)n+(k-1)n^2}(t;q)_{n}^2}{(q;q)_{N-m}(q;q)_{n}} \cV_{k-1}(m;tq^{n},q).
\end{align}

Now we simplify $\cV_k(N;t,q)$ to a great extent as follows.

\begin{theorem}
	For any nonnegative integer $N$,
	\begin{align}\label{eq:V-expression}
		\cV_k(N;t,q) &= \frac{(tq;q)_\infty}{(q;q)_\infty (q;q)_N} \sum_{n_1,\ldots,n_k\ge 0} (-1)^{n_k} t^{-n_k+\sum_{i=1}^k 2n_i} q^{-\binom{n_k}{2}+\sum_{i=1}^k n_i^2}\notag\\
		&\quad\times \frac{(t;q)_{n_k}}{(q;q)_{n_k}(tq;q)_{N+n_1}} \qbinom{n_k}{n_{k-1}}_q\qbinom{n_{k-1}}{n_{k-2}}_q\cdots \qbinom{n_2}{n_1}_q.
	\end{align}
\end{theorem}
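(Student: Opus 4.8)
The plan is to establish \eqref{eq:V-expression} by induction on $k$, driving the induction with the recursion \eqref{eq:V-rec} proved just above, which reduces the $k$-th instance to the $(k-1)$-st one with the reparametrized argument $tq^{n}$. I write $W_k(N;t,q)$ for the claimed right-hand side of \eqref{eq:V-expression}, so that the goal is $\cV_k=W_k$.

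For the base case $k=1$ the two inner chains of $\cV_1$ are empty and $\cV_1(N;t,q)$ collapses to $\sum_{m,n\ge 0} t^m q^{m^2+mn+n}(t;q)_n^2/\big((q;q)_{N-m}(q;q)_m^2(q;q)_n\big)$. I would first carry out the sum over $m$: rewriting $1/(q;q)_{N-m}$ through $(q^{-N};q)_m$ turns it into a terminating ${}_{1}\phi_{1}$, which I transform by Heine's identities \eqref{eq:Heine1}--\eqref{eq:Heine3} (equivalently by the $q$-Gau\ss{} sum \eqref{eq:qGauss} and the $q$-Chu--Vandermonde sum \eqref{eq:qCV-1}). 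Summing the remaining series in $n$ and collecting the prefactor $(tq;q)_\infty/\big((q;q)_\infty(q;q)_N\big)$ then gives $W_1(N;t,q)$.

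For the inductive step I assume $\cV_{k-1}(\,\cdot\,;tq^{n},q)=W_{k-1}(\,\cdot\,;tq^{n},q)$ and substitute this into \eqref{eq:V-rec}. The result is a multisum over the peeled indices $m,n$ and the inherited indices $p_1,\ldots,p_{k-1}$ of $W_{k-1}$, in which $m$ is tied to the \emph{bottom} index $p_1$ of the inherited chain through the factor $(tq^{n+1};q)_{m+p_1}$, whereas the new $q$-binomial $\qbinom{n_k}{n_{k-1}}_q$ demanded by the target should couple $n$ to the \emph{top} index $p_{k-1}$. The crux is to transport this coupling from the bottom to the top: I would apply the reversal lemma (the Warnaar-type transform established above), with its base specialized to $tq^{n}$ and with the shift $m$ absorbed into its integer parameters $u_i$, so as to reposition the $(tq^{n+1};q)$-factor and rewrite the inherited chain. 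After this reorganization the sum over the peeled variable $m$ should reduce to a terminating $q$-Vandermonde convolution, collapsing to precisely the missing factor $\qbinom{n_k}{n_{k-1}}_q$ with $n_k:=n$. It then remains to reconcile the prefactors: I would split the doubled factor $(t;q)_n^2$ from \eqref{eq:V-rec} so that one copy supplies the target top-layer $(t;q)_{n_k}$ while the other merges, via $(t;q)_n(tq^{n};q)_{p_{k-1}}=(t;q)_{n+p_{k-1}}$, with the inherited $(tq^{n};q)_{p_{k-1}}$, and telescope $(tq^{n+1};q)_\infty$ against the global $(tq;q)_\infty$.

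The main obstacle is exactly this transport-and-collapse, which cannot be carried out term by term: for $0<t,q<1$ the sum over $m$ is manifestly positive, whereas the target factor $\qbinom{n_k}{n_{k-1}}_q$ forces vanishing once $n_k<n_{k-1}$, so no naive identification of indices can succeed and the global reversal lemma is indispensable. The delicate points are selecting the reflection index $\ell$ correctly, encoding the $m$-dependent shift in $(tq^{n+1};q)_{m+p_1}$ faithfully through the $u_i$, and then verifying that every prefactor --- the powers of $t$ and $q$, the $(t;q)_n^2$ splitting, and the infinite product $(tq^{n+1};q)_\infty$ --- recombines exactly into $W_k(N;t,q)$. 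I expect the bookkeeping of these $q$-Pochhammer factors under the reparametrization $t\mapsto tq^{n}$ to be the most error-prone step, and I would hedge against slips by first checking the $n=0$ slice, where the reversal lemma degenerates to an identity of the inherited chain alone, before handling general $n$.
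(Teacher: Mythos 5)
Your overall skeleton --- induction on $k$ driven by \eqref{eq:V-rec}, with the base case $k=1$ handled by summing over $m$ first and chaining Heine transformations --- matches the paper. But the inductive step as you describe it has a genuine gap, and in two respects it points in the wrong direction. First, the identification $n_k:=n$ is incorrect and in fact contradicts your own observation that $(t;q)_n(tq^{n};q)_{p_{k-1}}=(t;q)_{n+p_{k-1}}$: that merging shows the new top index must be $n+p_{k-1}$, i.e.\ the peeled variable $n$ is absorbed by \emph{shifting every inherited index} $p_i\mapsto p_i+n$ (the paper's substitution $l_i:=n_i+n$), after which the inherited top index plus $n$ becomes $n_k$. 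The peeled $n$ is not a surviving summation index at all. Second, the Warnaar reversal lemma is neither needed nor applicable here: the inherited sum's top index carries the factors $(-1)^{p_{k-1}}t^{-p_{k-1}}q^{-\binom{p_{k-1}}{2}}(tq^{n};q)_{p_{k-1}}$, which do not fit the pure $t^{m_i}q^{m_i(m_i+u_i)}$ weights required by that lemma, so the proposed ``transport from bottom to top'' cannot be carried out with it.

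The actual crux, which is absent from your plan, is that after the index shift the two peeled variables $m$ and $n$ decouple from all inherited indices except $l_1$ and must be summed \emph{jointly} in closed form; this is the identity \eqref{eq:induction-key},
\begin{align*}
\sum_{m,n\ge 0} \frac{(-1)^n t^{m+n}q^{m^2+mn+\binom{n+1}{2}}(t;q)_{n}}{(q;q)_{N-m} (q;q)_{l_1-n}(tq;q)_{l_1+m} (q;q)_m(q;q)_{n}}
= \frac{1}{(q;q)_N} \sum_{l_0\ge 0} \frac{t^{2l_0}q^{l_0^2}}{(q;q)_{l_1-l_0} (q;q)_{l_0} (tq;q)_{N+l_0}},
\end{align*}
proved via Heine's second transformation \eqref{eq:Heine2} and the $q$-Chu--Vandermonde sum \eqref{eq:qCV-1}. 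Note that this evaluation creates the new index at the \emph{bottom} of the chain (it supplies $\qbinom{n_2}{n_1}_q$, the factor $(q;q)_{n_1}$, and the coupling $(tq;q)_{N+n_1}$), not the top binomial $\qbinom{n_k}{n_{k-1}}_q$ as you predict; your picture of the chain growing at the top is backwards. Without \eqref{eq:induction-key}, or an equivalent two-variable summation, the induction does not close.
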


Our strategy is to apply induction on $k$ by means of \eqref{eq:V-rec}. Here we work on the base case at $k=1$ and the inductive step separately.

\begin{proof}[Proof of the base case]
	Recall that
	\begin{align*}
		\cV_1(N;t,q) = \sum_{m_1,n_1\ge 0} \frac{t^{m_1} q^{m_1^2 + m_1n_1 + n_1} (t;q)_{n_1}^2}{(q;q)_{N-m_1} (q;q)_{m_1}^2 (q;q)_{n_1}}.
	\end{align*}
	We first focus on the sum over $m_1$:
	\begin{align*}
		\cV_1(N;t,q) &= \sum_{n_1\ge 0} \frac{q^{n_1} (t;q)_{n_1}^2}{(q;q)_{n_1}} \sum_{m_1\ge 0} \frac{t^{m_1} q^{m_1^2 + n_1m_1}}{(q;q)_{N-m_1} (q;q)_{m_1}^2}.
	\end{align*}
	It is clear that
	\begin{align*}
		\sum_{m_1\ge 0} \frac{t^{m_1} q^{m_1^2 + m_1n_1}}{(q;q)_{N-m_1} (q;q)_{m_1}^2} &= \frac{1}{(q;q)_N} \sum_{m_1\ge 0} \frac{(-1)^{m_1} t^{m_1} q^{\binom{m_1}{2}+(N+n_1+1)m_1} (q^{-N};q)_{m_1}}{(q;q)_{n_1}^2}\\
		&= \frac{1}{(q;q)_N} \lim_{\tau\to 0} {}_{2}\phi_{1} \left(\begin{matrix}
			1/\tau,q^{-N}\\
			q
		\end{matrix};q,tq^{N+n_1+1}\tau\right).
	\end{align*}
	We temporally assume that $|t|<1$ to ensure the convergence condition for the application of Heine's third transformation \eqref{eq:Heine3} to the ${}_{2}\phi_{1}$ series especially when $n_1 = 0$. Then,
	\begin{align*}
		\sum_{m_1\ge 0} \frac{t^{m_1} q^{m_1^2 + m_1n_1}}{(q;q)_{N-m_1} (q;q)_{m_1}^2} &= \frac{1}{(q;q)_N}\cdot (tq^{n_1};q)_\infty {}_{2}\phi_{1} \left(\begin{matrix}
			0,q^{N+1}\\
			q
		\end{matrix};q,tq^{n_1}\tau\right)\\
		&= \frac{(t;q)_\infty}{(q;q)_N (t;q)_{n_1}} \sum_{m_1\ge 0} \frac{t^{m_1}q^{n_1m_1}(q^{N+1};q)_{m_1}}{(q;q)_{m_1}^2}.
	\end{align*}
	It follows that
	\begin{align*}
		\cV_1(N;t,q) &= \frac{(t;q)_\infty}{(q;q)_N} \sum_{m_1\ge 0} \frac{t^{m_1} (q^{N+1};q)_{m_1}}{(q;q)_{m_1}^2} \sum_{n_1\ge 0} \frac{q^{(m_1+1)n_1} (t;q)_{n_1}}{(q;q)_{n_1}}\\
		\text{\tiny (by \eqref{eq:q-binomial})}&= \frac{(t;q)_\infty}{(q;q)_N} \sum_{m_1\ge 0} \frac{t^{m_1} (q^{N+1};q)_{m_1}}{(q;q)_{m_1}^2} \frac{(tq^{m_1+1};q)_\infty}{(q^{m_1+1};q)_\infty}\\
		&= \frac{(t;q)_\infty (tq;q)_\infty}{(q;q)_\infty (q;q)_N} \sum_{m_1\ge 0} \frac{t^{m_1} (q^{N+1};q)_{m_1}}{(q;q)_{m_1} (tq;q)_{m_1}}\\
		&= \frac{(t;q)_\infty (tq;q)_\infty}{(q;q)_\infty (q;q)_N} \lim_{\tau\to 0} {}_{2}\phi_{1} \left(\begin{matrix}
			q^{N+1},tq\tau\\
			tq
		\end{matrix};q,t\right)\\
		\text{\tiny (by \eqref{eq:Heine1})} &= \frac{(t;q)_\infty (tq;q)_\infty}{(q;q)_\infty (q;q)_N} \lim_{\tau\to 0} \frac{(tq\tau,tq^{N+1};q)_\infty}{(tq,t;q)_\infty} {}_{2}\phi_{1} \left(\begin{matrix}
			1/\tau,t\\
			tq^{N+1}
		\end{matrix};q,tq\tau\right)\\
		&= \frac{(tq;q)_\infty}{(q;q)_\infty (q;q)_N} \sum_{n_1\ge 0} \frac{(-1)^{n_1} t^{n_1} q^{\binom{n_1+1}{2}} (t;q)_{n_1}}{(q;q)_{n_1} (tq;q)_{N+n_1}}.
	\end{align*}
	It is notable that this relation can be analytically continued from $|t|<1$, which has been assumed earlier, to $t\in \mathbb{C}$. Hence, we arrive at \eqref{eq:V-expression} for $k=1$.
\end{proof}

\begin{proof}[Proof of the inductive step]
	Assume that \eqref{eq:V-expression} is valid for some $k-1$ with $k\ge 2$. Thus,
	\begin{align*}
		\cV_{k-1}(m;tq^{n},q) &= \frac{(tq^{n+1};q)_\infty}{(q;q)_\infty (q;q)_m} \sum_{n_1,\ldots,n_{k-1}\ge 0} (-1)^{n_{k-1}} t^{-n_{k-1}+\sum_{i=1}^{k-1} 2n_i}\notag\\
		&\quad\times \frac{q^{-\binom{n_{k-1}}{2}-nn_{k-1}+\sum_{i=1}^{k-1} (n_i^2+2nn_i)} (tq^n;q)_{n_{k-1}}}{(q;q)_{n_{k-1}-n_{k-2}}\cdots(q;q)_{n_2-n_1}(q;q)_{n_1}(tq^{n+1};q)_{m+n_1}}.
	\end{align*}
	Invoking \eqref{eq:V-rec},
	\begin{align*}
		\cV_k(N;t,q) &= \sum_{m,n\ge 0} \frac{(-1)^n t^{m+n}q^{m^2+mn+\binom{n+1}{2}}(t;q)_{n}^2}{(q;q)_{N-m}(q;q)_{n}} \frac{(tq^{n+1};q)_\infty}{(q;q)_\infty (q;q)_m}\\
		&\quad\times \sum_{n_1,\ldots,n_{k-1}\ge 0} (-1)^{n_{k-1}+n} t^{-(n_{k-1}+n)+\sum_{i=1}^{k-1} 2(n_i+n)}\notag\\
		&\quad\times \frac{q^{-\binom{n_{k-1}+n}{2}+\sum_{i=1}^{k-1} (n_i+n)^2} (tq^n;q)_{n_{k-1}}}{(q;q)_{n_{k-1}-n_{k-2}}\cdots(q;q)_{n_2-n_1}(q;q)_{n_1}(tq^{n+1};q)_{m+n_1}}\\
		&= \frac{(tq;q)_\infty}{(q;q)_\infty} \sum_{m,n\ge 0} \frac{(-1)^n t^{m+n}q^{m^2+mn+\binom{n+1}{2}}(t;q)_{n}}{(q;q)_{N-m} (q;q)_m(q;q)_{n}}\\
		&\quad\times \sum_{n_1,\ldots,n_{k-1}\ge 0} (-1)^{n_{k-1}+n} t^{-(n_{k-1}+n)+\sum_{i=1}^{k-1} 2(n_i+n)}\notag\\
		&\quad\times \frac{q^{-\binom{n_{k-1}+n}{2}+\sum_{i=1}^{k-1} (n_i+n)^2} (t;q)_{n_{k-1}+n}}{(q;q)_{n_{k-1}-n_{k-2}}\cdots(q;q)_{n_2-n_1}(q;q)_{n_1}(tq;q)_{m+(n_1+n)}}.
	\end{align*}
	We put, for each $1\le i\le k-1$:
	\begin{align*}
		l_i := n_i+n,
	\end{align*}
	and interchange the sums over $m,n$ and the rest. Then,
	\begin{align*}
		\cV_k(N;t,q) &= \frac{(tq;q)_\infty}{(q;q)_\infty} \sum_{l_1,\ldots,l_{k-1}\ge 0} \frac{(-1)^{l_{k-1}} t^{-l_{k-1}+\sum_{i=1}^{k-1} 2l_i} q^{-\binom{l_{k-1}}{2}+\sum_{i=1}^{k-1} l_i^2}(t;q)_{l_{k-1}}}{(q;q)_{l_{k-1}-l_{k-2}}\cdots(q;q)_{l_2-l_1}}\notag\\
		&\quad\times \sum_{m,n\ge 0} \frac{(-1)^n t^{m+n}q^{m^2+mn+\binom{n+1}{2}}(t;q)_{n}}{(q;q)_{N-m} (q;q)_{l_1-n}(tq;q)_{l_1+m} (q;q)_m(q;q)_{n}}.
	\end{align*}
	Hence, as long as we can show
	\begin{align}\label{eq:induction-key}
		&\sum_{m,n\ge 0} \frac{(-1)^n t^{m+n}q^{m^2+mn+\binom{n+1}{2}}(t;q)_{n}}{(q;q)_{N-m} (q;q)_{l_1-n}(tq;q)_{l_1+m} (q;q)_m(q;q)_{n}}\notag\\
		&\qquad\qquad\qquad\qquad\qquad\qquad = \frac{1}{(q;q)_N} \sum_{l_0\ge 0} \frac{t^{2l_0}q^{l_0^2}}{(q;q)_{l_1-l_0} (q;q)_{l_0} (tq;q)_{N+l_0}},
	\end{align}
	then \eqref{eq:V-expression} holds for $k$ by renaming the indices with $l_i\mapsto n_{i+1}$ for each $0\le i\le k-1$. To acquire this last ingredient in the recipe, we single out the sum over $n$:
	\begin{align*}
		\LHS\eqref{eq:induction-key} = \sum_{m\ge 0} \frac{t^{m}q^{m^2}}{(q;q)_{N-m} (q;q)_m(tq;q)_{l_1+m}} \sum_{n\ge 0} \frac{(-1)^n t^{n}q^{\binom{n}{2}+(m+1)n}(t;q)_{n}}{(q;q)_{l_1-n}(q;q)_{n}}.
	\end{align*}
	Note that
	\begin{align*}
		&\sum_{n\ge 0} \frac{(-1)^n t^{n}q^{\binom{n}{2}+(m+1)n}(t;q)_{n}}{(q;q)_{l_1-n}(q;q)_{n}}\\
		&\qquad = \frac{1}{(q;q)_{l_1}} \lim_{\tau\to 0} {}_{2}\phi_{1} \left(\begin{matrix}
			t,q^{-l_1}\\
			t^2q^{m+1}\tau
		\end{matrix};q,tq^{l_1+m+1}\right)\\
		&\qquad = \frac{1}{(q;q)_{l_1}} \lim_{\tau\to 0} \frac{(t^2q^{l_1+m+1}\tau,tq^{m+1};q)_\infty}{(t^2q^{m+1}\tau,tq^{l_1+m+1};q)_\infty} {}_{2}\phi_{1} \left(\begin{matrix}
			1/\tau,q^{-l_1}\\
			tq^{m+1}
		\end{matrix};q,t^2q^{l_1+m+1}\tau\right)\\
		&\qquad = \frac{(tq^{m+1};q)_\infty}{(tq^{l_1+m+1};q)_\infty} \sum_{l_0\ge 0} \frac{t^{2l_0}q^{l_0^2+ml_0}}{(q;q)_{l_1-l_0}(q;q)_{l_0}(tq^{m+1};q)_{l_0}},
	\end{align*}
	where we have applied Heine's second transform \eqref{eq:Heine2}. Hence,
	\begin{align*}
		&\LHS\eqref{eq:induction-key}\\
		&= \sum_{l_0\ge 0} \frac{t^{2l_0}q^{l_0^2}}{(q;q)_{l_1-l_0}(q;q)_{l_0}(tq;q)_{l_0}} \sum_{m\ge 0} \frac{t^{m}q^{m^2+l_0m}}{(q;q)_{N-m} (q;q)_m(tq^{l_0+1};q)_{m}}\\
		&= \frac{1}{(q;q)_N}\sum_{l_0\ge 0} \frac{t^{2l_0}q^{l_0^2}}{(q;q)_{l_1-l_0}(q;q)_{l_0}(tq;q)_{l_0}} \lim_{\tau\to 0} {}_{2}\phi_{1} \left(\begin{matrix}
			1/\tau,q^{-N}\\
			tq^{l_0+1}
		\end{matrix};q,tq^{N+l_0+1}\tau\right).
	\end{align*}
	Applying the first $q$-Chu--Vandermonde sum \eqref{eq:qCV-1} yields
	\begin{align*}
		\LHS\eqref{eq:induction-key} = \frac{1}{(q;q)_N}\sum_{l_0\ge 0} \frac{t^{2l_0}q^{l_0^2}}{(q;q)_{l_1-l_0}(q;q)_{l_0}(tq;q)_{l_0}} \frac{1}{(tq^{l_0+1};q)_N},
	\end{align*}
	which is exactly what we need.
\end{proof}

\section{$q$-Lebesgue identity}

Recall that
\begin{align*}
	\tcZ_1(N;t,q) = \frac{(q;q)_\infty(t^2q;q)_\infty}{(tq;q)_\infty^2} \sum_{m_1,n_1\ge 0} \frac{t^{m_1} q^{m_1^2 + m_1n_1 + n_1} (t;q)_{n_1}^2}{(q;q)_{N-m_1} (t^2q;q)_{N+n_1} (q;q)_{m_1}^2 (q;q)_{n_1}}.
\end{align*}

Unlike how we treat $\cV_1(N;t,q)$, this time we focus on the sum over $n_1$ at first:
\begin{align*}
	\tcZ_1(N;t,q) &= \frac{(q;q)_\infty(t^2q;q)_\infty}{(tq;q)_\infty^2 (t^2q;q)_N} \sum_{m_1\ge 0} \frac{t^{m_1} q^{m_1^2}}{(q;q)_{N-m_1} (q;q)_{m_1}^2}\\
	&\quad\times \sum_{n_1\ge 0} \frac{q^{(m_1+1)n_1} (t;q)_{n_1}^2}{(q;q)_{n_1} (t^2q^{N+1};q)_{n_1}}.
\end{align*}
Note that
\begin{align*}
	\sum_{n_1\ge 0} \frac{q^{(m_1+1)n_1} (t;q)_{n_1}^2}{(q;q)_{n_1} (t^2q^{N+1};q)_{n_1}} &= {}_{2}\phi_{1} \left(\begin{matrix}
		t,t\\
		t^2q^{N+1}
	\end{matrix};q,q^{m_1+1}\right)\\
	\text{\tiny (by \eqref{eq:Heine2})} &= \frac{(tq^{N+1},tq^{m_1+1};q)_\infty}{(t^2q^{N+1},q^{m_1+1};q)_\infty} {}_{2}\phi_{1} \left(\begin{matrix}
		q^{m_1-N},t\\
		tq^{m_1+1}
	\end{matrix};q,tq^{N+1}\right)\\
	&= \frac{(tq^{N+1},tq^{m_1+1};q)_\infty}{(t^2q^{N+1},q^{m_1+1};q)_\infty} \sum_{n_1\ge 0} \frac{t^{n_1} q^{(N+1)n_1} (q^{m_1-N},t;q)_{n_1}}{(q,tq^{m_1+1};q)_{n_1}}.
\end{align*}
Therefore,
\begin{align*}
	\tcZ_1(N;t,q) &= \frac{1}{(tq;q)_N} \sum_{m_1,n_1\ge 0} \frac{t^{m_1+n_1} q^{m_1^2+(N+1)n_1} (t;q)_{n_1} (q^{m_1-N};q)_{n_1}}{(q;q)_{N-m_1} (q;q)_{m_1} (q;q)_{n_1} (tq;q)_{m_1+n_1}}\\
	&= \frac{1}{(tq;q)_N} \sum_{n_1\ge 0} \frac{(-1)^{n_1} t^{n_1} q^{\binom{n_1+1}{2}} (t;q)_{n_1}}{(q;q)_{n_1} (tq;q)_{n_1}}\\
	&\quad\times \sum_{m_1\ge 0} \frac{t^{m_1} q^{m_1^2 + n_1 m_1}}{(q;q)_{(N-n_1)-m_1} (q;q)_{m_1} (tq^{n_1+1};q)_{m_1}}.
\end{align*}
For the inner sum over $m_1$, we have
\begin{align*}
	&\!\!\!\!\!\!\!\!\!\!\!\!\!\sum_{m_1\ge 0} \frac{t^{m_1} q^{m_1^2+n_1m_1}}{(q;q)_{(N-n_1)-m_1}(q;q)_{m_1}(tq^{n_1+1};q)_{m_1}}\\
	&= \frac{1}{(q;q)_{N-n_1}}\lim_{\tau\to 0} {}_{2}\phi_{1} \left(\begin{matrix}
		1/\tau,q^{-N+n_1}\\
		tq^{n_1+1}
	\end{matrix};q,tq^{N+1}\tau\right)\\
	\text{\tiny (by \eqref{eq:qGauss})}& = \frac{1}{(q;q)_{N-n_1}} \frac{(tq^{N+1};q)_\infty}{(tq^{n_1+1};q)_\infty}.
\end{align*}
It follows that
\begin{align}\label{eq:Z1-1}
	\tcZ_1(N;t,q) = \frac{1}{(tq;q)_N^2}\sum_{n_1\ge 0} \frac{(-1)^{n_1} t^{n_1} q^{\binom{n_1+1}{2}} (t;q)_{n_1}}{(q;q)_{N-n_1} (q;q)_{n_1}}.
\end{align}

We may further rewrite the above as
\begin{align*}
	\tcZ_1(N;t,q) &= \frac{1}{(q;q)_N (tq;q)_N^2} \sum_{n_1\ge 0} \frac{t^{n_1}q^{(N+1)n_1} (t;q)_{n_1} (q^{-N};q)_{n_1}}{(q;q)_{n_1}}\\
	& = \frac{1}{(q;q)_N (tq;q)_N^2} \lim_{\tau\to 0} {}_{2}\phi_{1} \left(\begin{matrix}
		t,q^{-N}\\
		t^2q \tau
	\end{matrix};q,tq^{N+1}\right)\\
	\text{\tiny (by \eqref{eq:Heine2})} &= \frac{1}{(q;q)_N (tq;q)_N^2} \lim_{\tau\to 0} \frac{(t^2q^{N+1}\tau,tq;q)_\infty}{(t^2q \tau,tq^{N+1};q)_\infty} {}_{2}\phi_{1} \left(\begin{matrix}
		1/\tau,q^{-N}\\
		tq
	\end{matrix};q,t^2q^{N+1}\tau\right)\\
	&= \frac{1}{(q;q)_N (tq;q)_N} \sum_{n_1\ge 0} \frac{(-1)^{n_1}t^{2n_1}q^{\binom{n_1}{2}+(N+1)n_1} (q^{-N};q)_{n_1}}{(q;q)_{n_1} (tq;q)_{n_1}}. 
\end{align*}
Consequently,
\begin{align}\label{eq:Z1-2}
	\tcZ_1(N;t,q) = \frac{1}{(tq;q)_N}\sum_{n_1\ge 0} \frac{t^{2n_1} q^{n_1^2}}{(q;q)_{N-n_1} (q;q)_{n_1} (tq;q)_{n_1}}.
\end{align}

Now recall a polynomial identity discovered by Paule \cite[p.~272, eq.~(43)]{Pau1985}:
\begin{align*}
	\sum_{n=-N}^N (-1)^n q^{2n^2} \qbinom{2N}{N-n}_q = \frac{(q;q)_{2N}}{(q;q)_N} \sum_{n=0}^N \frac{q^{n^2}}{(-q;q)_n} \qbinom{N}{n}_q.
\end{align*}
Invoking \eqref{eq:Z1-1} and \eqref{eq:Z1-2} with $t=-1$, we have the following identity.

\begin{theorem}
	For any nonnegative integer $N$,
	\begin{align}\label{eq:Lebe-finite}
		\sum_{n\ge 0} q^{\binom{n+1}{2}} (-1;q)_{n} \qbinom{N}{n}_q = \frac{1}{(q;q^2)_N} \sum_{n=-N}^N (-1)^n q^{2n^2} \qbinom{2N}{N-n}_q.
	\end{align}
\end{theorem}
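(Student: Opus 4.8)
The plan is to observe that \eqref{eq:Z1-1} and \eqref{eq:Z1-2} are two distinct closed forms for the single series $\tcZ_1(N;t,q)$; equating them at $t=-1$ produces a relation between two single sums, which upon invoking Paule's identity \cite{Pau1985} becomes precisely \eqref{eq:Lebe-finite}. The value $t=-1$ is what makes everything collapse, since then the factor $(-1)^{n}t^{n}$ appearing in \eqref{eq:Z1-1} equals $1$.

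First I would specialize \eqref{eq:Z1-1} at $t=-1$. The sign factor becomes trivial, and writing $1/\big((q;q)_{N-n}(q;q)_n\big)=\qbinom{N}{n}_q/(q;q)_N$ yields
\begin{align*}
\tcZ_1(N;-1,q) = \frac{1}{(-q;q)_N^2\,(q;q)_N}\sum_{n\ge 0} q^{\binom{n+1}{2}}(-1;q)_n \qbinom{N}{n}_q,
\end{align*}
so the left-hand side of \eqref{eq:Lebe-finite} already appears, up to the explicit prefactor.

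Next I would specialize \eqref{eq:Z1-2} at $t=-1$ and perform the same $q$-binomial rewriting, obtaining
\begin{align*}
\tcZ_1(N;-1,q) = \frac{1}{(-q;q)_N\,(q;q)_N}\sum_{n\ge 0}\frac{q^{n^2}}{(-q;q)_n}\qbinom{N}{n}_q.
\end{align*}
The inner sum here is exactly the right-hand side of Paule's identity \cite{Pau1985}, so I can replace it by $\frac{(q;q)_N}{(q;q)_{2N}}\sum_{n=-N}^N(-1)^n q^{2n^2}\qbinom{2N}{N-n}_q$, which introduces the bilateral sum of \eqref{eq:Lebe-finite} into a second closed form for $\tcZ_1(N;-1,q)$.

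Finally I would equate the two expressions for $\tcZ_1(N;-1,q)$ and clear the common denominators. The only genuine bookkeeping is simplifying the resulting constant: after cancelling one factor $(-q;q)_N$, the identities $(-q;q)_N(q;q)_N=(q^2;q^2)_N$ and $(q;q)_{2N}=(q;q^2)_N(q^2;q^2)_N$ collapse the remaining prefactor to $1/(q;q^2)_N$, exactly as \eqref{eq:Lebe-finite} requires. I expect no real difficulty here; the only place demanding care is tracking the several Pochhammer factors so that the $(q^2;q^2)_N$ cancel and precisely one $(q;q^2)_N$ survives in the denominator.
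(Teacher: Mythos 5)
Your proposal is correct and follows exactly the paper's route: the paper also obtains \eqref{eq:Lebe-finite} by setting $t=-1$ in \eqref{eq:Z1-1} and \eqref{eq:Z1-2}, equating the two expressions for $\tcZ_1(N;-1,q)$, and invoking Paule's identity, with the same Pochhammer simplification $(-q;q)_N(q;q)_N=(q^2;q^2)_N$ and $(q;q)_{2N}=(q;q^2)_N(q^2;q^2)_N$ that you carry out. Your write-up simply makes explicit the bookkeeping that the paper leaves to the reader.
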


Remarkably, the above serves as a new finitization of a special case of the \emph{$q$-Lebesgue sum} \cite[p.~21, Corollary 2.7 with $a=-1$]{And1998}:
\begin{align*}
	\sum_{n\ge 0}\frac{q^{\binom{n+1}{2}} (-1;q)_n}{(q;q)_n} = (-q;q)_\infty (-q;q^2)_\infty.
\end{align*}
Another finitization of this identity was discovered by Santos and Sills \cite[p.~128, eq.~(3.1)]{SS2002}, while for the generic $q$-Lebesgue sum, we have witnessed finite analogs derived by Alladi and Berkovich \cite[p.~803, eq.~(1.15)]{AB2004} and Rowell \cite[p.~786, eq.~(1.5)]{Row2010}.

\section{Toward the $\mathrm{A}_1$-type sum in Theorem \ref{th:Z-expression}}

Our objective in this part is to reduce $\tcZ_k(N;t,q)$ to the $\mathrm{A}_1$-type sum as recorded in Theorem \ref{th:Z-expression}. We start with the following result.

\begin{theorem}\label{th:Z-expression-new}
	When $k\ge 2$, for any nonnegative integer $N$,
	\begin{align}\label{eq:Z-expression-new}
		&\tcZ_k(N;t,q)\notag\\
		&\qquad= \frac{(t^2q;q)_\infty}{(tq;q)_\infty(q;q)_N(tq;q)_N} \sum_{n_1,\ldots,n_k\ge 0} (-1)^{n_k} t^{-n_k+\sum_{i=1}^k 2n_i} q^{-\binom{n_k}{2}+\sum_{i=1}^k n_i^2}\notag\\
		&\qquad\quad\times \frac{(t;q)_{n_k}}{(q;q)_{n_k}(t^2q;q)_{N+n_1}} \qbinom{n_k}{n_{k-1}}_q\qbinom{n_{k-1}}{n_{k-2}}_q\cdots \qbinom{n_2}{n_1}_q.
	\end{align}
\end{theorem}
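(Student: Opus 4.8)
The plan is to read off from \eqref{eq:Z-new} that $\tcZ_k(N;t,q)$ is, up to the prefactor $(q;q)_\infty(t^2q;q)_\infty/(tq;q)_\infty^2$, exactly the sum defining $\cV_k(N;t,q)$ with the single extra denominator factor $1/(t^2q;q)_{N+n_k}$ inserted into the summand. Writing $\tcZ_k(N;t,q)=\frac{(q;q)_\infty(t^2q;q)_\infty}{(tq;q)_\infty^2}\,\widetilde S_k(N;t,q)$ for this modified sum, I observe that $1/(t^2q;q)_{N+n_k}$ depends only on the variable $n_k$ that is peeled off in the derivation of the recursion \eqref{eq:V-rec}. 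I would therefore repeat that peeling verbatim, carrying the factor along as an inert spectator, to obtain
\begin{align*}
\widetilde S_k(N;t,q)=\sum_{m,n\ge 0}\frac{t^{m+2(k-1)n}q^{m^2+(m+1)n+(k-1)n^2}(t;q)_n^2}{(q;q)_{N-m}(q;q)_n(t^2q;q)_{N+n}}\,\cV_{k-1}(m;tq^n,q),
\end{align*}
where the inner object is the \emph{unmodified} $\cV_{k-1}$, whose closed form \eqref{eq:V-expression} is already available. This is exactly why the hypothesis $k\ge 2$ is needed: the reduction lands on $\cV_{k-1}$, the case $k=1$ being the separate $q$-Lebesgue computation that produced \eqref{eq:Z1-2}.

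Substituting \eqref{eq:V-expression} for $\cV_{k-1}(m;tq^n,q)$, performing the shift $l_i:=n_i+n$ for $1\le i\le k-1$, and interchanging the order of summation proceeds word for word as in the inductive step establishing \eqref{eq:V-expression}. The whole problem then collapses to proving the analogue of the key identity \eqref{eq:induction-key} with the spectator factor present, namely
\begin{align*}
&\sum_{m,n\ge 0}\frac{(-1)^n t^{m+n}q^{m^2+mn+\binom{n+1}{2}}(t;q)_n}{(q;q)_{N-m}(q;q)_{l_1-n}(tq;q)_{l_1+m}(q;q)_m(q;q)_n(t^2q;q)_{N+n}}\\
&\qquad\qquad=\frac{1}{(tq;q)_N(q;q)_N}\sum_{l_0\ge 0}\frac{t^{2l_0}q^{l_0^2}}{(q;q)_{l_1-l_0}(q;q)_{l_0}(t^2q;q)_{N+l_0}}.
\end{align*}
Once this holds, relabelling $l_i\mapsto n_{i+1}$ for $0\le i\le k-1$ and collecting the infinite products reproduces the right-hand side of \eqref{eq:Z-expression-new}; note that the modified identity carries one extra factor $1/(tq;q)_N$ relative to \eqref{eq:induction-key}, which is precisely the discrepancy between the prefactors of $\cV_k$ and of the target.

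To establish the modified identity I would single out the inner sum over $n$, just as for \eqref{eq:induction-key}. The decisive structural change is that now $1/(t^2q;q)_{N+n}=\frac{1}{(t^2q;q)_N(t^2q^{N+1};q)_n}$ supplies a \emph{genuine} lower parameter: after rewriting $1/(q;q)_{l_1-n}$ through $(q^{-l_1};q)_n$, the $n$-sum becomes the terminating series
\begin{align*}
\frac{1}{(t^2q;q)_N(q;q)_{l_1}}\,{}_2\phi_1\!\left(\begin{matrix}t,\,q^{-l_1}\\ t^2q^{N+1}\end{matrix};q,\,tq^{l_1+m+1}\right),
\end{align*}
in contrast to the $\tau\to 0$ limit that arose for $\cV_k$. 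I would then apply Heine's second transformation \eqref{eq:Heine2}; the resulting infinite products telescope against $(tq;q)_{l_1+m}$ via $(tq^{m+1};q)_\infty/(tq^{l_1+m+1};q)_\infty=(tq;q)_{l_1+m}/(tq;q)_m$ and against $(t^2q;q)_N$ via $(t^2q^{N+l_1+1};q)_\infty/(t^2q;q)_\infty=1/(t^2q;q)_{N+l_1}$, leaving a clean sum over $m$ which I would evaluate by the first $q$-Chu--Vandermonde sum \eqref{eq:qCV-1} (or the $q$-Gau\ss{} sum \eqref{eq:qGauss}), exactly in the spirit of the $\tcZ_1$ computation leading to \eqref{eq:Z1-2}.

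I expect this last identity to be the main obstacle: one must shepherd the Pochhammer factor $(t^2q;q)_{N+n}$, which couples the truncation level $N$ to the summation index, through both the Heine transformation and the $q$-Chu--Vandermonde evaluation, and then verify that the surviving infinite products reassemble into exactly $1/(tq;q)_N$ and $1/(t^2q;q)_{N+l_0}$ and nothing further. Everything outside this identity is a faithful transcription of the $\cV_k$ argument with a single spectator factor carried throughout.
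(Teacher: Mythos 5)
Your proposal follows the paper's proof essentially verbatim: the recursion you derive for the modified sum is exactly \eqref{eq:Z-rec}, and the two-variable identity you reduce everything to is precisely the paper's key step \eqref{eq:induction-2-key}, attacked by the same tools (recognize the $n$-sum as a terminating ${}_2\phi_1$ with lower parameter $t^2q^{N+1}$, apply Heine's second transformation \eqref{eq:Heine2}, then evaluate the $m$-sum by $q$-Chu--Vandermonde \eqref{eq:qCV-1}). One caveat: after the Chu--Vandermonde step the $l_0$-sum carries $1/(q;q)_{N-l_0}$ and $1/(t^2q;q)_{N+l_1}$ rather than the target's $1/(q;q)_N$ and $1/(t^2q;q)_{N+l_0}$, so the final conversion is not mere reassembly of infinite products as you suggest --- it requires one further ${}_2\phi_1$ transformation (again \eqref{eq:Heine2}, in a $\tau\to 0$ limit), which is how the paper closes the argument.
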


\begin{proof}
	We recall \eqref{eq:Z-new} and mimic how \eqref{eq:V-rec} is derived so as to get the relation:
	\begin{align}\label{eq:Z-rec}
		\tcZ_k(N;t,q) &= \frac{(q;q)_\infty(t^2q;q)_\infty}{(tq;q)_\infty^2}\sum_{m,n\ge 0} t^{m+2(k-1)n}q^{m^2+(m+1)n+(k-1)n^2}\notag\\
		&\quad\times \frac{(t;q)_{n}^2}{(q;q)_{N-m}(t^2q;q)_{N+n}(q;q)_{n}} \cV_{k-1}(m;tq^{n},q).
	\end{align}
	Now the term $\cV_{k-1}(m;tq^{n},q)$ can be replaced by means of \eqref{eq:V-expression}. Then,
	\begin{align*}
		\tcZ_k(N;t,q) &= \frac{(t^2q;q)_\infty}{(tq;q)_\infty} \sum_{m,n\ge 0} \frac{(-1)^n t^{m+n}q^{m^2+mn+\binom{n+1}{2}}(t;q)_{n}}{(q;q)_{N-m}(t^2q;q)_{N+n} (q;q)_m(q;q)_{n}}\\
		&\quad\times \sum_{n_1,\ldots,n_{k-1}\ge 0} (-1)^{n_{k-1}+n} t^{-(n_{k-1}+n)+\sum_{i=1}^{k-1} 2(n_i+n)}\notag\\
		&\quad\times \frac{q^{-\binom{n_{k-1}+n}{2}+\sum_{i=1}^{k-1} (n_i+n)^2} (t;q)_{n_{k-1}+n}}{(q;q)_{n_{k-1}-n_{k-2}}\cdots(q;q)_{n_2-n_1}(q;q)_{n_1}(tq;q)_{m+(n_1+n)}}\\
		&= \frac{(t^2q;q)_\infty}{(tq;q)_\infty} \sum_{l_1,\ldots,l_{k-1}\ge 0} \frac{(-1)^{l_{k-1}} t^{-l_{k-1}+\sum_{i=1}^{k-1} 2l_i} q^{-\binom{l_{k-1}}{2}+\sum_{i=1}^{k-1} l_i^2}(t;q)_{l_{k-1}}}{(q;q)_{l_{k-1}-l_{k-2}}\cdots(q;q)_{l_2-l_1}}\notag\\
		&\quad\times \sum_{m,n\ge 0} \frac{(-1)^n t^{m+n}q^{m^2+mn+\binom{n+1}{2}}(t;q)_{n}}{(q;q)_{N-m}(t^2q;q)_{N+n} (q;q)_{l_1-n}(tq;q)_{l_1+m} (q;q)_m(q;q)_{n}}.
	\end{align*}
	As long as we can show
	\begin{align}\label{eq:induction-2-key}
		&\sum_{m,n\ge 0} \frac{(-1)^n t^{m+n}q^{m^2+mn+\binom{n+1}{2}}(t;q)_{n}}{(q;q)_{N-m}(t^2q;q)_{N+n} (q;q)_{l_1-n}(tq;q)_{l_1+m} (q;q)_m(q;q)_{n}}\notag\\
		&\qquad\qquad\qquad\qquad = \frac{1}{(q;q)_N (tq;q)_N} \sum_{l_0\ge 0} \frac{t^{2l_0}q^{l_0^2}}{(q;q)_{l_1-l_0}(q;q)_{l_0}(t^2q;q)_{N+l_0}},
	\end{align}
	then \eqref{eq:Z-expression-new} becomes valid. Note that
	\begin{align*}
		\LHS\eqref{eq:induction-2-key} = \sum_{m\ge 0} \frac{t^{m}q^{m^2}}{(q;q)_{N-m} (q;q)_m(tq;q)_{l_1+m}} \sum_{n\ge 0} \frac{(-1)^n t^{n}q^{\binom{n}{2}+(m+1)n}(t;q)_{n}}{(q;q)_{l_1-n}(q;q)_{n}(t^2q;q)_{N+n}}.
	\end{align*}
	For the inner sum over $n$, we have
	\begin{align*}
		&\sum_{n\ge 0} \frac{(-1)^n t^{n}q^{\binom{n}{2}+(m+1)n}(t;q)_{n}}{(q;q)_{l_1-n}(q;q)_{n}(t^2q;q)_{N+n}}\\
		&\quad = \frac{1}{(q;q)_{l_1}(t^2q;q)_N} {}_{2}\phi_{1} \left(\begin{matrix}
			t,q^{-l_1}\\
			t^2q^{N+1}
		\end{matrix};q,tq^{l_1+m+1}\right)\\
		&\quad = \frac{1}{(q;q)_{l_1}(t^2q;q)_N} \frac{(t^2q^{N+l_1+1},tq^{m+1};q)_\infty}{(t^2q^{N+1},tq^{l_1+m+1};q)_\infty} {}_{2}\phi_{1} \left(\begin{matrix}
			q^{-(N-m)},q^{-l_1}\\
			tq^{m+1}
		\end{matrix};q,t^2q^{N+l_1+1}\right)\\
		&\quad = \frac{(t^2q^{N+l_1+1},tq^{m+1};q)_\infty(q;q)_{N-m}}{(t^2q,tq^{l_1+m+1};q)_\infty}\\
		&\quad\quad\times \sum_{l_0\ge 0} \frac{t^{2l_0}q^{l_0^2+ml_0}}{(q;q)_{l_1-l_0}(q;q)_{N-m-l_0}(q;q)_{l_0}(tq^{m+1};q)_{l_0}},
	\end{align*}
	where we have applied Heine's second transform \eqref{eq:Heine2}. Hence,
	\begin{align*}
		\LHS\eqref{eq:induction-2-key}&= \frac{1}{(t^2q;q)_{N+l_1}}\sum_{l_0\ge 0} \frac{t^{2l_0}q^{l_0^2}}{(q;q)_{l_1-l_0}(q;q)_{l_0}(tq;q)_{l_0}}\\
		&\quad\times \sum_{m\ge 0} \frac{t^{m}q^{m^2+l_0m}}{(q;q)_{(N-l_0)-m} (q;q)_m(tq^{l_0+1};q)_{m}}\\
		&= \frac{1}{(t^2q;q)_{N+l_1}}\sum_{l_0\ge 0} \frac{t^{2l_0}q^{l_0^2}}{(q;q)_{N-l_0}(q;q)_{l_1-l_0}(q;q)_{l_0}(tq;q)_{l_0}}\\
		&\quad\times \lim_{\tau\to 0} {}_{2}\phi_{1} \left(\begin{matrix}
			1/\tau,q^{-(N-l_0)}\\
			tq^{l_0+1}
		\end{matrix};q,tq^{N+1}\tau\right)\\
		\text{\tiny (by \eqref{eq:qCV-1})} &= \frac{1}{(t^2q;q)_{N+l_1}}\sum_{l_0\ge 0} \frac{t^{2l_0}q^{l_0^2}}{(q;q)_{N-l_0}(q;q)_{l_1-l_0}(q;q)_{l_0}(tq;q)_{l_0}}\frac{1}{(tq^{l_0+1};q)_{N-l_0}}\\
		&= \frac{1}{(tq;q)_N(t^2q;q)_{N+l_1}}\frac{1}{(q;q)_N(q;q)_{l_1}}\\
		&\quad\times \lim_{\tau\to 0} {}_{2}\phi_{1} \left(\begin{matrix}
			q^{-N},q^{-l_1}\\
			t^2q\tau
		\end{matrix};q,t^2q^{N+l_1+1}\right)\\
		\text{\tiny (by \eqref{eq:Heine2})}&= \frac{1}{(tq;q)_N(t^2q;q)_{N+l_1}}\frac{1}{(q;q)_N(q;q)_{l_1}}\\
		&\quad\times \lim_{\tau\to 0} \frac{(t^2q^{l_1+1}\tau,t^2q^{N+1};q)_\infty}{(t^2q\tau,t^2q^{N+l_1+1};q)_\infty} {}_{2}\phi_{1} \left(\begin{matrix}
			1/\tau,q^{-l_1}\\
			t^2q^{N+1}
		\end{matrix};q,t^2q^{l_1+1}\tau\right)\\
		&= \frac{1}{(q;q)_N (tq;q)_N} \sum_{l_0\ge 0} \frac{t^{2l_0}q^{l_0^2}}{(q;q)_{l_1-l_0}(q;q)_{l_0}(t^2q;q)_{N+l_0}},
	\end{align*}
	as requested.
\end{proof}

To relate the sum in Theorem \ref{th:Z-expression-new} to that in Theorem \ref{th:Z-expression}, we require the following general result.

\begin{theorem}\label{th:old=new}
	For any nonnegative integer $N$,
	\begin{align}\label{eq:old=new}
		&\sum_{n_1,\ldots,n_k\ge 0} \frac{t^{\sum_{i=1}^k 2n_i} q^{\sum_{i=1}^k n_i^2}}{(aq;q)_{N-n_k}(q;q)_{n_k}(tq;q)_{n_1}} \qbinom{n_k}{n_{k-1}}_q\qbinom{n_{k-1}}{n_{k-2}}_q\cdots \qbinom{n_2}{n_1}_q\notag\\
		&\qquad\qquad = \frac{(at^2q;q)_\infty}{(tq;q)_\infty(aq;q)_N} \sum_{n_1,\ldots,n_k\ge 0} (-1)^{n_k} t^{-n_k+\sum_{i=1}^k 2n_i} q^{-\binom{n_k}{2}+\sum_{i=1}^k n_i^2}\notag\\
		&\qquad\qquad\quad\times \frac{(t;q)_{n_k}}{(q;q)_{n_k}(at^2q;q)_{N+n_1}} \qbinom{n_k}{n_{k-1}}_q\qbinom{n_{k-1}}{n_{k-2}}_q\cdots \qbinom{n_2}{n_1}_q.
	\end{align}
\end{theorem}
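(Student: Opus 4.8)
The plan is to prove \eqref{eq:old=new} by induction on $k$, viewing the nested product $\prod_{i=2}^k\qbinom{n_i}{n_{i-1}}_q$ (weighted by $t^{2n_i}q^{n_i^2}$) as a Bailey-type chain whose two ends carry the only data distinguishing the two sides. The free parameter $a$ is not a cosmetic generalization: it is what should make the induction self-propagating, since each shortening of the chain will feed back a shifted value of $a$ together with a shift of $t$ of the form $t\mapsto tq^{n}$, exactly as the shifts $t\mapsto tq^{n}$ drive the recursions \eqref{eq:V-rec} and \eqref{eq:Z-rec}.

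For the base case $k=1$ both sides collapse to single sums. On the left I would clear $(aq;q)_{N-n}$ by the reversal $1/(aq;q)_{N-n}=(-a)^nq^{Nn-\binom{n}{2}}(q^{-N}/a;q)_n/(aq;q)_N$, and on both sides replace the resulting $(-1)^nq^{\binom{n}{2}}$ factors by $\lim_{\tau\to0}\tau^n(1/\tau;q)_n$, turning each side into a limiting ${}_2\phi_1$. The identity would then follow from Heine's transformations \eqref{eq:Heine2}--\eqref{eq:Heine3}, in direct analogy with the passage from \eqref{eq:Z1-1} to \eqref{eq:Z1-2}; indeed the specialization $a=0$ recovers essentially that computation for $\tcZ_1$.

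For the inductive step I would peel the top of the chain. Summing over $n_k$ against $\qbinom{n_k}{n_{k-1}}_q$ leaves a $(k-1)$-fold sum, but the boundary factor $1/\big((aq;q)_{N-n_k}(q;q)_{n_k}\big)$ does not evaluate in closed form on its own (it produces a nonterminating ${}_1\phi_1$). The correct move is instead to transform the top pair of variables simultaneously: introduce an auxiliary summation index by rewriting the $a$-dependent Pochhammer, then collapse the resulting double sum to a single index by Heine's second transform \eqref{eq:Heine2} followed by the first $q$-Chu--Vandermonde sum \eqref{eq:qCV-1}. This is the same double-sum-to-single-sum mechanism that powers the key lemmas \eqref{eq:induction-key} and \eqref{eq:induction-2-key}, and it should be engineered so that the output is again of the shape \eqref{eq:old=new} at length $k-1$ but with $(a,t,N)$ suitably shifted, whereupon the induction hypothesis applies.

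The main obstacle is the genuinely nonlocal character of the transformation: the parameter $a$ sits at the top boundary $(aq;q)_{N-n_k}$ on the left but at the bottom boundary $(at^2q;q)_{N+n_1}$ on the right, so no single end-peel telescopes. The heart of the argument is therefore the precise two-variable summation lemma that migrates the $a$- and $N$-dependence across the chain while preserving the $q$-binomial linking coefficients, and I expect verifying that the shifts it induces are exactly the ones the free parameter $a$ can absorb to be the delicate point. A viable fallback, should the length-$(k-1)$ form fail to reproduce itself verbatim, is to prove a strengthened statement carrying an arbitrary top-boundary sequence, which is manifestly stable under the one-step chain move and specializes back to \eqref{eq:old=new}.
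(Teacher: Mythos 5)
Your diagnosis of the difficulty is exactly right --- the parameter $a$ sits at the $n_k$-end of the chain on the left of \eqref{eq:old=new} but at the $n_1$-end on the right, so no single end-peel telescopes --- but the proposal does not resolve it, and the induction on $k$ as described would not close. If you peel $n_k$ and try to recognize a length-$(k-1)$ instance of \eqref{eq:old=new} inside, that shorter instance would again carry its $a$-dependence at \emph{its} bottom index $n_1$; reassembling the outer sum over the peeled variable then requires interacting with a factor $(a't'^2q;q)_{N'+n_1}$ attached to the opposite end of the chain from where you peeled, which is precisely the nonlocality you flagged. The paper does not induct on $k$ at all. Its engine is a single one-variable transformation (Lemma \ref{le:Z-1=Z-2-full}), proved by one application of Heine's second transform \eqref{eq:Heine2}, carrying free boundary parameters $L$ and $M$ that stand for the two neighboring chain indices, plus the parameters $a$ and $b$; this is the ``strengthened statement stable under the one-step chain move'' that your fallback gestures at but never formulates. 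The proof then \emph{sweeps} the chain twice: starting from the right-hand side it transforms the sums over $n_1,n_2,\dots,n_{k-1}$ in turn with $(b,L,M)=(\infty,n_{i-1},n_{i+1})$ and generic $a$ (migrating the $a$-dependence up the chain), handles $n_k$ with $(b,L,M)=(t,n_{k-1},\infty)$ to reach the hybrid form \eqref{eq:old-new-middle}, and then sweeps back down through $n_{k-1},\dots,n_1$ with $(a,b)=(0,t)$ to land on the left-hand side. Each index is transformed, never evaluated; your step of evaluating a collapsed double sum by \eqref{eq:qCV-1} has no counterpart here and would not produce a closed form at any stage.

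Your base case is essentially correct and matches the paper (the $k=1$ statement is Lemma \ref{le:Z-1=Z-2-full} with $(b,L,M)=(t,0,\infty)$, itself a Heine transform computation analogous to the passage from \eqref{eq:Z1-1} to \eqref{eq:Z1-2}). To repair the general case you would need to (i) state and prove the one-step lemma with the free neighbor-parameters $L,M$ and the extra parameters $a,b$ allowing the specializations $b=\infty$, $M=\infty$, $a=0$; and (ii) replace the induction on $k$ by the explicit two-pass schedule of applications, since a single pass only carries the $a$-dependence one link along the chain per application and the identity requires it to traverse all $k$ links and then have the $t$-structure swept back.
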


Before providing its proof, we refresh our memory of the connection between \eqref{eq:Z1-1} and \eqref{eq:Z1-2}. What we have done is the identity
\begin{align*}
	\frac{1}{(tq;q)_N}\sum_{n\ge 0} \frac{(-1)^{n} t^{n} q^{\binom{n+1}{2}} (t;q)_{n}}{(q;q)_{N-n} (q;q)_{n}} = \sum_{n\ge 0} \frac{t^{2n} q^{n^2}}{(q;q)_{N-n} (q;q)_{n} (tq;q)_{n}}.
\end{align*}
Now we shall go slightly further.

\begin{lemma}\label{le:Z-1=Z-2-full}
	For any nonnegative integers $L$, $M$ and $N$,
	\begin{align}\label{eq:Z-1=Z-2-full}
		&\sum_{n\ge L} \frac{(-1)^{n} t^{2n} q^{\binom{n+1}{2}} b^{-n}(b;q)_{n}}{(q;q)_{M-n} (q;q)_{n-L} (at^2q;q)_{N+n}}\notag\\
		&\qquad\qquad=\frac{(-1)^L q^{-\binom{L}{2}}b^{-L}(b;q)_L (aq;q)_{N-L}(b^{-1}t^2q;q)_M}{(at^2q;q)_{N+M}} \notag\\
		&\qquad\qquad\quad\times \sum_{n\ge L} \frac{t^{2n}q^{n^2}}{(aq;q)_{N-n}(q;q)_{M-n}(q;q)_{n-L} (b^{-1}t^2q;q)_{n}}.
	\end{align}
\end{lemma}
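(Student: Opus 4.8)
The plan is to prove Lemma \ref{le:Z-1=Z-2-full} by recognizing both sides as $q$-hypergeometric series and matching them via one of Heine's transformations, in direct analogy with the already-established connection between \eqref{eq:Z1-1} and \eqref{eq:Z1-2} (which is precisely the special case $L=0$, $b=t$, $a=1$ of this lemma). First I would rewrite the summand on the left-hand side using the substitution $n\mapsto n+L$ to strip off the $(q;q)_{n-L}$ factor, converting it into a sum starting at $n=0$; the prefactors $(-1)^L q^{-\binom{L}{2}} b^{-L}(b;q)_L$ that appear on the right-hand side are exactly what is generated by the shift, via the identities $\binom{n+L}{2}=\binom{n}{2}+\binom{L}{2}+nL$ and $(b;q)_{n+L}=(b;q)_L(bq^L;q)_n$. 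After the shift, the left side should become a multiple of a terminating (thanks to $1/(q;q)_{M-n}$) ${}_{2}\phi_{1}$ series whose parameters involve $q^{-(M-L)}$, $bq^L$, and the base-point $at^2q^{N+L}$.

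The key step is then to apply Heine's second transformation \eqref{eq:Heine2} to this ${}_{2}\phi_{1}$. The aim is that the transformed series, after simplifying the ratio of infinite Pochhammer products into finite ones, reproduces the right-hand sum $\sum_{n\ge L} t^{2n}q^{n^2}/\bigl[(aq;q)_{N-n}(q;q)_{M-n}(q;q)_{n-L}(b^{-1}t^2q;q)_n\bigr]$. The appearance of $b^{-1}t^2q$ in the denominator of the target sum strongly suggests that Heine's transformation sends the parameter $b$ to something proportional to $b^{-1}t^2q$, and the factors $(aq;q)_{N-L}$ and $(b^{-1}t^2q;q)_M$ on the right arise from collapsing the ratio $(c/a;q)_\infty/(c;q)_\infty$-type quotients that Heine's formula produces. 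Concretely, I would choose the three ${}_{2}\phi_{1}$ parameters so that one of the upper parameters is the terminating $q^{-(M-L)}$, identify $b,c,z$ in \eqref{eq:Heine2} accordingly, and read off the resulting prefactor and transformed summand. One should keep careful track of the balancing: the combination $t^{2n}$ paired with $q^{n^2}$ (rather than a single power of $t$) indicates that the relevant ${}_{2}\phi_{1}$ is of the ``almost-poised'' shape already encountered, where a $q^{\binom{n}{2}}$ factor on one side becomes $q^{n^2}$ on the other precisely through the quadratic exponent bookkeeping in Heine's transform.

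The main obstacle I anticipate is the bookkeeping of the exponents of $t$ and $q$ and the conversion of infinite products to finite Pochhammer symbols; in particular one must verify that applying \eqref{eq:Heine2} with a terminating parameter produces the exact finite quotient $(aq;q)_{N-L}(b^{-1}t^2q;q)_M/(at^2q;q)_{N+M}$ and no residual infinite factor. Since the series on both sides terminate (from $1/(q;q)_{M-n}$), convergence is not an issue, but one should confirm that the various base-points match and that no spurious $\tau\to 0$ limit or analytic-continuation subtlety intervenes, unlike in the $\cV_1$ computation where a limiting parameter was introduced. A useful sanity check before committing to the general manipulation is to confirm that setting $L=0$, $b=t$, and $a=1$ recovers the identity displayed just before the lemma statement, which pins down the correct reading of Heine's transformation and validates the direction in which $b$ is sent to $b^{-1}t^2q$. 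Once the single ${}_{2}\phi_{1}$ identity is verified at this level of generality, the lemma follows immediately, with the extra parameters $L$, $M$, $a$, and $b$ carried along as free constants throughout.
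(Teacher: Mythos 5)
Your proposal follows essentially the same route as the paper's proof: shift $n\mapsto n+L$ to extract the prefactor $(-1)^Lq^{-\binom{L}{2}}b^{-L}(b;q)_L$ (together with powers of $t$ and $q$), recognize the result as the terminating series $(q;q)_{M-L}^{-1}\,{}_{2}\phi_{1}\bigl(bq^L,q^{-(M-L)};at^2q^{N+L+1};q,b^{-1}t^2q^{M+1}\bigr)$, apply Heine's second transformation \eqref{eq:Heine2}, and collapse the resulting infinite-product quotients into the finite factors $(aq;q)_{N-L}(b^{-1}t^2q;q)_M/(at^2q;q)_{N+M}$ before shifting the index back. Apart from a harmless off-by-one in your stated base point (it should be $at^2q^{N+L+1}$ rather than $at^2q^{N+L}$), the plan is correct and matches the paper.
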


In \eqref{eq:Z-1=Z-2-full}, we may put $b=1/\tau$ and take the limit at $\tau\to 0$:
\begin{align*}
	&\sum_{n\ge L} \frac{t^{2n} q^{n^2}}{(q;q)_{M-n} (q;q)_{n-L} (at^2q;q)_{N+n}}\\
	&\qquad\qquad = \frac{(aq;q)_{N-L}}{(at^2q;q)_{N+M}} \sum_{n\ge L} \frac{t^{2n}q^{n^2}}{(aq;q)_{N-n}(q;q)_{M-n}(q;q)_{n-L}}.
\end{align*}
We shall refer to this process as ``\emph{taking $b=\infty$}.'' Meanwhile, we may also take the limit at $M\to \infty$ in \eqref{eq:Z-1=Z-2-full}:
\begin{align*}
	&\sum_{n\ge L} \frac{(-1)^{n} t^{2n} q^{\binom{n+1}{2}} b^{-n}(b;q)_{n}}{(q;q)_{n-L} (at^2q;q)_{N+n}}\notag\\
	&\qquad\qquad=\frac{(-1)^L q^{-\binom{L}{2}}b^{-L}(b;q)_L (aq;q)_{N-L}(b^{-1}t^2q;q)_\infty}{(at^2q;q)_{\infty}} \notag\\
	&\qquad\qquad\quad\times \sum_{n\ge L} \frac{t^{2n}q^{n^2}}{(aq;q)_{N-n}(q;q)_{n-L} (b^{-1}t^2q;q)_{n}}.
\end{align*}
This process will be known as ``\emph{taking $M=\infty$}.''

\begin{proof}[Proof of Lemma \ref{le:Z-1=Z-2-full}]
	We have
	\begin{align*}
		&\!\!\!\!\!\!\!\!\!\!\LHS\eqref{eq:Z-1=Z-2-full}\\
		&= \frac{(-1)^L t^{2L} q^{\binom{L+1}{2}}b^{-L}(b;q)_L}{(at^2q;q)_{N+L}}\sum_{n\ge 0} \frac{(-1)^{n} t^{2n} q^{\binom{n+1}{2}+Ln} b^{-n} (bq^L;q)_{n}}{(q;q)_{(M-L)-n} (q;q)_{n} (at^2q^{N+L+1};q)_n}\\
		&= \frac{(-1)^L t^{2L} q^{\binom{L+1}{2}}b^{-L}(b;q)_L}{(at^2q;q)_{N+L} (q;q)_{M-L}} {}_{2}\phi_{1} \left(\begin{matrix}
			bq^L,q^{-(M-L)}\\
			at^2q^{N+L+1}
		\end{matrix};q,b^{-1}t^2q^{M+1}\right)\\
		\text{\tiny (by \eqref{eq:Heine2})} &= \frac{(-1)^L t^{2L} q^{\binom{L+1}{2}}b^{-L}(b;q)_L}{(at^2q;q)_{N+L}(q;q)_{M-L}} \frac{(at^2q^{N+M+1},b^{-1}t^2q^{L+1};q)_\infty}{(at^2q^{N+L+1},b^{-1}t^2q^{M+1};q)_\infty}\\
		&\quad\times {}_{2}\phi_{1} \left(\begin{matrix}
			a^{-1}q^{-(N-L)},q^{-(M-L)}\\
			b^{-1}t^2q^{L+1}
		\end{matrix};q,at^2q^{N+M+1}\right)\\
		&= \frac{(-1)^L t^{2L} q^{\binom{L+1}{2}}b^{-L}(b;q)_L (aq;q)_{N-L}(b^{-1}t^2q;q)_M}{(at^2q;q)_{N+M}}\\
		&\quad\times \sum_{n\ge 0} \frac{t^{2n}q^{n^2+2Ln}}{(aq;q)_{N-L-n}(q;q)_{M-L-n}(q;q)_n (b^{-1}t^2q;q)_{L+n}}\\
		&= \frac{(-1)^L q^{-\binom{L}{2}}b^{-L}(b;q)_L (aq;q)_{N-L}(b^{-1}t^2q;q)_M}{(at^2q;q)_{N+M}}\\
		&\quad\times \sum_{n\ge L} \frac{t^{2n}q^{n^2}}{(aq;q)_{N-n}(q;q)_{M-n}(q;q)_{n-L} (b^{-1}t^2q;q)_{n}},
	\end{align*}
	as desired.
\end{proof}

We are then in a position to prove Theorem \ref{th:old=new}.

\begin{proof}[Proof of Theorem \ref{th:old=new}]
	It is clear that the $k=1$ case of \eqref{eq:old=new} is
	\begin{align*}
		&\sum_{n_1 \ge 0} \frac{t^{2n_1} q^{n_1^2}}{(aq;q)_{N-n_1}(q;q)_{n_1}(tq;q)_{n_1}}\\
		&\qquad = \frac{(at^2q;q)_\infty}{(tq;q)_\infty(aq;q)_N} \sum_{n_1\ge 0} \frac{(-1)^{n_1} t^{n_1} q^{\binom{n_1+1}{2}} (t;q)_{n_1}}{(q;q)_{n_1} (at^2q;q)_{N+n_1}};
	\end{align*}
	this is exactly \eqref{eq:Z-1=Z-2-full} with $(b,L,M) = (t,0,\infty)$. Now we assume that $k\ge 2$ and begin with the right-hand side of \eqref{eq:old=new} by singling out the sum over $n_1$:
	\begin{align*}
		&\RHS\eqref{eq:old=new}\\
		&= \frac{(at^2q;q)_\infty}{(tq;q)_\infty(aq;q)_N} \sum_{n_2,\ldots,n_k\ge 0} \frac{(-1)^{n_k} t^{-n_k+\sum_{i=2}^k 2n_i} q^{-\binom{n_k}{2}+\sum_{i=2}^k n_i^2} (t;q)_{n_k}}{(q;q)_{n_k-n_{k-1}}\cdots (q;q)_{n_3-n_2}}\\
		&\quad\times \sum_{n_1\ge 0} \frac{t^{2n_1}q^{n_1^2}}{(q;q)_{n_2-n_1}(q;q)_{n_1}(at^2q;q)_{N+n_1}}.
	\end{align*}
	We then apply \eqref{eq:Z-1=Z-2-full} with $(b,L,M) = (\infty,0,n_2)$ to this sum over $n_1$ to derive
	\begin{align*}
		&\RHS\eqref{eq:old=new}\\
		&= \frac{(at^2q;q)_\infty}{(tq;q)_\infty} \sum_{n_1\ge 0} \frac{t^{2n_1}q^{n_1^2}}{(q;q)_{n_1}}\sum_{n_3,\ldots,n_k\ge 0} \frac{(-1)^{n_k} t^{-n_k+\sum_{i=3}^k 2n_i} q^{-\binom{n_k}{2}+\sum_{i=3}^k n_i^2} (t;q)_{n_k}}{(q;q)_{n_k-n_{k-1}}\cdots (q;q)_{n_4-n_3}}\\
		&\quad\times \frac{1}{(aq;q)_{N-n_1}}\sum_{n_2\ge n_1} \frac{t^{2n_2}q^{n_2^2}}{(q;q)_{n_3-n_2}(q;q)_{n_2-n_1}(at^2q;q)_{N+n_2}}.
	\end{align*}
	We continue to use \eqref{eq:Z-1=Z-2-full} with $(b,L,M) = (\infty,n_1,n_3)$ to this sum over $n_2$. In general, we sequentially apply \eqref{eq:Z-1=Z-2-full} with $(b,L,M) = (\infty,n_{i-1},n_{i+1})$ to the sum over $n_i$ for $i=2,\ldots,k-1$. Thus,
	\begin{align*}
		\RHS\eqref{eq:old=new} &= \frac{(at^2q;q)_\infty}{(tq;q)_\infty} \sum_{n_1,\ldots,n_{k-1}\ge 0} \frac{t^{\sum_{i=1}^{k-1} 2n_i} q^{\sum_{i=1}^{k-1} n_i^2}}{(q;q)_{n_{k-1}-n_{k-2}}\cdots (q;q)_{n_2-n_1} (q;q)_{n_1}}\\
		&\quad\times \frac{1}{(aq;q)_{N-n_{k-1}}}\sum_{n_k\ge n_{k-1}} \frac{(-1)^{n_k} t^{n_k}q^{\binom{n_k+1}{2}} (t;q)_{n_k}}{(q;q)_{n_k-n_{k-1}}(at^2q;q)_{N+n_k}}.
	\end{align*}
	For the sum over $n_k$, we apply \eqref{eq:Z-1=Z-2-full} with $(b,L,M) = (t,n_{k-1},\infty)$ and get
	\begin{align}\label{eq:old-new-middle}
		\RHS\eqref{eq:old=new} &= \sum_{n_k\ge 0} \frac{t^{n_k}q^{n_k^2}}{(aq;q)_{N-n_k}(tq;q)_{n_k}}\notag\\
		&\quad\times \sum_{n_1,\ldots,n_{k-1}\ge 0} \frac{(-1)^{n_{k-1}} t^{-n_{k-1}+\sum_{i=1}^{k-1} 2n_i} q^{-\binom{n_{k-1}}{2}+\sum_{i=1}^{k-1} n_i^2} (t;q)_{n_{k-1}}}{(q;q)_{n_k-n_{k-1}}\cdots (q;q)_{n_2-n_1}(q;q)_{n_1}}.
	\end{align}
	Now we single out the sum over $n_{k-1}$:
	\begin{align*}
		\RHS\eqref{eq:old=new} &= \sum_{n_k\ge 0} \frac{t^{n_k}q^{n_k^2}}{(aq;q)_{N-n_k}}\sum_{n_1,\ldots,n_{k-2}\ge 0} \frac{t^{\sum_{i=1}^{k-2} 2n_i} q^{\sum_{i=1}^{k-2} n_i^2}}{(q;q)_{n_{k-2}-n_{k-3}}\cdots (q;q)_{n_2-n_1}(q;q)_{n_1}}\\
		&\quad\times \frac{1}{(tq;q)_{n_k}} \sum_{n_{k-1}\ge n_{k-2}} \frac{(-1)^{n_{k-1}} t^{n_{k-1}} q^{\binom{n_{k-1}+1}{2}} (t;q)_{n_{k-1}}}{(q;q)_{n_k-n_{k-1}}(q;q)_{n_{k-1}-n_{k-2}}}.
	\end{align*}
	We then utilize \eqref{eq:Z-1=Z-2-full} with $(a,b,L,M) = (0,t,n_{k-2},n_{k})$ to this sum over $n_{k-1}$. In general, we take turns applying \eqref{eq:Z-1=Z-2-full} with $(a,b,L,M) = (0,t,n_{i-1},n_{i+1})$ to the sum over $n_i$ for $i=k-1,\ldots,2$. Hence,
	\begin{align*}
		\RHS\eqref{eq:old=new} &= \sum_{n_2,\ldots,n_{k}\ge 0} \frac{t^{\sum_{i=2}^{k} 2n_i} q^{\sum_{i=2}^{k} n_i^2}}{(aq;q)_{N-n_k}(q;q)_{n_{k}-n_{k-1}}\cdots (q;q)_{n_3-n_2}}\\
		&\quad\times \frac{1}{(tq;q)_{n_2}} \sum_{n_{1}\ge 0} \frac{(-1)^{n_{1}} t^{n_{1}} q^{\binom{n_{1}+1}{2}} (t;q)_{n_{1}}}{(q;q)_{n_2-n_{1}}(q;q)_{n_{1}}}.
	\end{align*}
	Finally, applying \eqref{eq:Z-1=Z-2-full} with $(a,b,L,M) = (0,t,0,n_{2})$ to the sum over $n_1$ yields the left-hand side of \eqref{eq:old=new}.
\end{proof}

Now Theorem \ref{th:Z-expression} becomes clear.

\begin{proof}[Proof of Theorem \ref{th:Z-expression}]
	The $k=1$ case has been shown in \eqref{eq:Z1-2}. For $k\ge 2$, we recall \eqref{eq:Z-expression-new} and use \eqref{eq:old=new} with $a=1$.
\end{proof}

It is also notable that from \eqref{eq:V-expression}, we may apply \eqref{eq:old=new} with $a=t^{-1}$ to derive the following alternative expression for $\cV_k(N;t,q)$.

\begin{theorem}
	For any nonnegative integer $N$,
	\begin{align}\label{eq:V-expression-new}
		\cV_k(N;t,q) &= \frac{(tq;q)_\infty (t^{-1}q;q)_N}{(q;q)_\infty (q;q)_N} \sum_{n_1,\ldots,n_k\ge 0} t^{\sum_{i=1}^k 2n_i} q^{\sum_{i=1}^k n_i^2}\notag\\
		&\quad\times \frac{1}{(t^{-1}q;q)_{N-n_k}(q;q)_{n_k}(tq;q)_{n_1}} \qbinom{n_k}{n_{k-1}}_q\qbinom{n_{k-1}}{n_{k-2}}_q\cdots \qbinom{n_2}{n_1}_q.
	\end{align}
\end{theorem}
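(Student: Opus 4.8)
The plan is to obtain \eqref{eq:V-expression-new} as a direct specialization of the transformation in Theorem~\ref{th:old=new}. I would begin from the already-established closed form \eqref{eq:V-expression} for $\cV_k(N;t,q)$ and observe that the multisum appearing there is, up to an overall prefactor, precisely the right-hand side of \eqref{eq:old=new}. The decisive point is to choose the free parameter $a$ in \eqref{eq:old=new} so that the two multisums match term by term.

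The correct choice is $a = t^{-1}$, which is forced by the Pochhammer factor $(at^2q;q)_{N+n_1}$ sitting in the summand on the right-hand side of \eqref{eq:old=new}: setting $at^2 = t$, that is $a = t^{-1}$, collapses $at^2q$ to $tq$ and thereby reproduces the factor $(tq;q)_{N+n_1}$ occurring in \eqref{eq:V-expression}. All the remaining $t$- and $q$-powers, the factor $(t;q)_{n_k}$, the string of $q$-binomial coefficients, and the sign $(-1)^{n_k}$ are already identical between the two summands, so after this substitution the two multisums agree exactly.

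With $a = t^{-1}$ the external constants in \eqref{eq:old=new} simplify cleanly: $(at^2q;q)_\infty = (tq;q)_\infty$ cancels the $(tq;q)_\infty$ in the denominator, $(aq;q)_N$ becomes $(t^{-1}q;q)_N$, and the whole prefactor $(at^2q;q)_\infty/\bigl[(tq;q)_\infty(aq;q)_N\bigr]$ reduces to $1/(t^{-1}q;q)_N$. Reading \eqref{eq:old=new} backward then expresses the multisum in \eqref{eq:V-expression} as $(t^{-1}q;q)_N$ times the $\mathrm{A}_1$-type sum on the left-hand side of \eqref{eq:old=new}, whose summand carries the denominator factor $(t^{-1}q;q)_{N-n_k}$. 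Substituting this into \eqref{eq:V-expression} and collecting the resulting prefactor $(tq;q)_\infty(t^{-1}q;q)_N/\bigl[(q;q)_\infty(q;q)_N\bigr]$ yields \eqref{eq:V-expression-new} at once.

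Since this is a single specialization of an identity proved earlier, there is no serious obstacle; the only genuine care needed is the bookkeeping of the Pochhammer prefactors and the verification that $a = t^{-1}$ is indeed the specialization aligning $(at^2q;q)_{N+n_1}$ with $(tq;q)_{N+n_1}$. I would also note that \eqref{eq:old=new} is an identity of rational functions in $t$ and $q$, so the specialization $a = t^{-1}$ is legitimate away from the isolated zeros of $(t^{-1}q;q)_{N-n_k}$, and hence holds identically.
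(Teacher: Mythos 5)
Your proposal is correct and coincides with the paper's own derivation: the paper obtains \eqref{eq:V-expression-new} precisely by applying \eqref{eq:old=new} with $a=t^{-1}$ to the closed form \eqref{eq:V-expression}, exactly as you do. Your bookkeeping of the prefactor $(at^2q;q)_\infty/\bigl[(tq;q)_\infty(aq;q)_N\bigr]$ reducing to $1/(t^{-1}q;q)_N$ is also accurate.
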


\section{Theorem \ref{th:S-m-infty} revisited}

As the first application of \eqref{eq:Z-expression}, we revisit Theorem \ref{th:S-m-infty}, or more precisely, its finitization Theorem \ref{th:S-1-finite} concerning $\tcZ_k(N;1,q)$, and give an alternative proof.

\begin{proof}[Second proof of Theorem \ref{th:S-1-finite}]
	It follows from \eqref{eq:Z-expression} that
	\begin{align*}
		\tcZ_k(N;1,q) = \frac{1}{(q;q)_N} \sum_{n_1,\ldots,n_k\ge 0} \frac{q^{\sum_{i=1}^k n_i^2}}{(q;q)_{N-n_k}(q;q)_{n_k-n_{k-1}}\cdots (q;q)_{n_2-n_1}(q;q)_{n_1}^2}.
	\end{align*}
	Then we only need to apply \eqref{eq:multi-sum-N} with $d_1=\cdots = d_k = 0$ and $a=1$ to arrive at \eqref{eq:S-1-finite}.
\end{proof}

\section{Finitization of Theorems \ref{th:S-m-(-1)-infty} and \ref{th:strange-A2-sum}}

For the second application of \eqref{eq:Z-expression}, we complete the proof of Theorems \ref{th:S-m-(-1)-infty} and \ref{th:strange-A2-sum}. To begin with, we need the following single-sum expression for the finite multisum $\tcZ_k(N;-1,q)$.

\begin{theorem}
	For any nonnegative integer $N$,
	\begin{align}\label{eq:Z-(-1)-expression}
		\tcZ_k(N;-1,q) = \frac{1}{(q;q)_{2N}(-q;q)_N}\sum_{n=-N}^N (-1)^n q^{(k+1)n^2} \qbinom{2N}{N-n}_q.
	\end{align}
\end{theorem}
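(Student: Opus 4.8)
The plan is to derive \eqref{eq:Z-(-1)-expression} from the single-product expression \eqref{eq:Z-expression} by induction on $k$. First I would specialize \eqref{eq:Z-expression} at $t=-1$. Since $(-1)^{2\sum n_i}=1$, $(tq;q)_N=(-q;q)_N$, the chain of $q$-binomial coefficients telescopes as
\begin{align*}
	\frac{1}{(q;q)_{n_k}}\qbinom{n_k}{n_{k-1}}_q\cdots\qbinom{n_2}{n_1}_q=\frac{1}{(q;q)_{n_1}\,(q;q)_{n_k-n_{k-1}}\cdots (q;q)_{n_2-n_1}},
\end{align*}
and $(q;q)_{n_1}(-q;q)_{n_1}=(q^2;q^2)_{n_1}$, I obtain
\begin{align*}
	\tcZ_k(N;-1,q)=\frac{1}{(-q;q)_N}\sum_{0\le n_1\le\cdots\le n_k\le N}\frac{q^{\sum_{i=1}^k n_i^2}}{(q;q)_{N-n_k}(q^2;q^2)_{n_1}\prod_{i=2}^k(q;q)_{n_i-n_{i-1}}}.
\end{align*}
Writing $F_k(N)$ for the displayed multisum, peeling off the outermost index yields the recursion $F_k(N)=\sum_{m=0}^N q^{m^2}F_{k-1}(m)/(q;q)_{N-m}$. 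It then suffices to prove by induction that $F_k(N)=G_k(N):=(q;q)_{2N}^{-1}\sum_{n=-N}^N(-1)^n q^{(k+1)n^2}\qbinom{2N}{N-n}_q$; dividing by $(-q;q)_N$ gives \eqref{eq:Z-(-1)-expression}.

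For the base case $k=1$, I would combine the two expressions \eqref{eq:Z1-1} and \eqref{eq:Z1-2} for $\tcZ_1(N;t,q)$ at $t=-1$ with the finite $q$-Lebesgue identity \eqref{eq:Lebe-finite}. Indeed, \eqref{eq:Z1-2} at $t=-1$ shows $F_1(N)=(-q;q)_N\tcZ_1(N;-1,q)$, while \eqref{eq:Z1-1} at $t=-1$ rewrites $\tcZ_1(N;-1,q)$ as $(q;q)_N^{-1}(-q;q)_N^{-2}\sum_n q^{\binom{n+1}{2}}(-1;q)_n\qbinom{N}{n}_q$. Applying \eqref{eq:Lebe-finite} to the inner sum and using $(q;q)_N(q;q^2)_N=(q;q)_{2N}/(-q;q)_N$ collapses everything to $F_1(N)=G_1(N)$.

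For the inductive step I would substitute the hypothesis $F_{k-1}(m)=G_{k-1}(m)$ into the recursion, use $\qbinom{2m}{m-n}_q/(q;q)_{2m}=1/\big((q;q)_{m-n}(q;q)_{m+n}\big)$, and interchange summations to get
\begin{align*}
	F_k(N)=\sum_{n=-N}^N(-1)^n q^{kn^2}\sum_{m=|n|}^N\frac{q^{m^2}}{(q;q)_{N-m}(q;q)_{m-n}(q;q)_{m+n}}.
\end{align*}
The whole step then hinges on the single-variable evaluation
\begin{align*}
	\sum_{m=|n|}^N\frac{q^{m^2}}{(q;q)_{N-m}(q;q)_{m-n}(q;q)_{m+n}}=\frac{q^{n^2}}{(q;q)_{N-n}(q;q)_{N+n}},
\end{align*}
which is symmetric under $n\mapsto-n$ (so one may assume $n\ge0$) and, after the shift $m=n+j$, becomes exactly the already-established transform \eqref{eq:trans-1} with $a=1$, $M=2n$, and $N$ replaced by $N-n$. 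Inserting this back gives $F_k(N)=\sum_n(-1)^n q^{(k+1)n^2}/\big((q;q)_{N-n}(q;q)_{N+n}\big)=G_k(N)$, completing the induction.

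The genuinely substantive point is the inductive step, and its success rests entirely on recognizing that the inner $m$-sum is a disguised instance of \eqref{eq:trans-1}; once that is spotted the argument is mechanical. Conceptually, this induction is just the unit ($a=1$) Bailey chain applied to the Bailey pair with $\beta_n=1/(q^2;q^2)_n$, whose companion $\alpha_n$ is encoded by the finite identity $\sum_{n=-N}^N(-1)^n q^{n^2}\qbinom{2N}{N-n}_q=(q;q^2)_N$; the factor $q^{(k+1)n^2}$ reflects the $k$-fold action $\alpha_n\mapsto q^{kn^2}\alpha_n$ of the chain. The only real hazard is careful bookkeeping with the index ranges and the $n\mapsto-n$ folding, not any analytic difficulty.
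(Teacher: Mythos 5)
Your proposal is correct, and its first half coincides exactly with the paper's proof: both specialize \eqref{eq:Z-expression} at $t=-1$, telescope the $q$-binomial chain, and combine $(q;q)_{n_1}(-q;q)_{n_1}=(q^2;q^2)_{n_1}$ to reduce the claim to the finite Andrews--Gordon-type evaluation
\begin{align*}
	\sum_{n_1,\ldots,n_k\ge 0} \frac{q^{\sum_{i=1}^k n_i^2}}{(q;q)_{N-n_k}(q;q)_{n_k-n_{k-1}}\cdots (q;q)_{n_2-n_1}(q^2;q^2)_{n_1}}
	= \frac{1}{(q;q)_{2N}}\sum_{n=-N}^N (-1)^n q^{(k+1)n^2} \qbinom{2N}{N-n}_q.
\end{align*}
The divergence is only in how this last identity is handled: the paper simply cites it as a standard $\mathrm{A}_1$ Rogers--Ramanujan result from Warnaar's survey, whereas you prove it from scratch by the Bailey-chain-style induction $F_k(N)=\sum_m q^{m^2}F_{k-1}(m)/(q;q)_{N-m}$, with the inductive step resting on the correctly identified reindexing of \eqref{eq:trans-1} (with $a=1$, $M=2|n|$, $N\mapsto N-|n|$; the exponent bookkeeping $(m-n)^2+2n(m-n)=m^2-n^2$ checks out, as does the $n\mapsto -n$ symmetry). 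Your base case via \eqref{eq:Z1-1}, \eqref{eq:Z1-2} and the finite $q$-Lebesgue identity \eqref{eq:Lebe-finite} also verifies, though it is slightly roundabout: as your own closing remark about $\sum_{n=-N}^N(-1)^nq^{n^2}\qbinom{2N}{N-n}_q=(q;q^2)_N$ suggests, one could start the induction at $k=0$ with $F_0(N)=1/(q^2;q^2)_N$ and avoid the Lebesgue detour entirely. The net effect is that your argument is self-contained where the paper's is not, at the cost of a page of extra computation; both are valid.
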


\begin{proof}
	In light of \eqref{eq:Z-expression},
	\begin{align*}
		&\tcZ_k(N;-1,q)\\
		&\qquad = \frac{1}{(-q;q)_N} \sum_{n_1,\ldots,n_k\ge 0} \frac{q^{\sum_{i=1}^k n_i^2}}{(q;q)_{N-n_k}(q;q)_{n_k-n_{k-1}}\cdots (q;q)_{n_2-n_1}(q^2;q^2)_{n_1}}.
	\end{align*}
	Meanwhile, it is a standard result on $\mathrm{A}_1$ Rogers--Ramanujan type identities \cite[p.~3]{War1999} that
	\begin{align*}
		&\sum_{n_1,\ldots,n_k\ge 0} \frac{q^{\sum_{i=1}^k n_i^2}}{(q;q)_{N-n_k}(q;q)_{n_k-n_{k-1}}\cdots (q;q)_{n_2-n_1}(q^2;q^2)_{n_1}} \\
		&\qquad = \frac{1}{(q;q)_{2N}}\sum_{n=-N}^N (-1)^n q^{(k+1)n^2} \qbinom{2N}{N-n}_q,
	\end{align*}
	which leads us to the claimed equality.
\end{proof}

The limiting case at $N\to \infty$ fills in the last piece of the puzzle.

\begin{proof}[Proof of Theorems \ref{th:S-m-(-1)-infty} and \ref{th:strange-A2-sum}]
	Recalling \eqref{eq:Z-new}, we have
	\begin{align*}
		\RHS\eqref{eq:strange-A2-sum} &= (-q;q)_\infty^2 \lim_{N\to \infty} \tcZ_k(N;-1,q)\\
		\text{\tiny (by \eqref{eq:Z-(-1)-expression})} &= (-q;q)_\infty^2 \cdot \frac{1}{(q;q)_\infty^2 (-q;q)_\infty} \sum_{n=-\infty}^\infty (-1)^n q^{(k+1)n^2}\\
		\text{\tiny (by \eqref{eq:JTP})} &= \frac{(q^2;q^2)_\infty (q^{k+1};q^{k+1})_\infty^2}{(q;q)_\infty^3 (q^{2k+2};q^{2k+2})_\infty};
	\end{align*}
	this is the left-hand side of \eqref{eq:strange-A2-sum}. In the meantime, we know from \eqref{eq:Z-finite-limit} that
	\begin{align*}
		\cZ_k(-1,q) = (q;q)_\infty \lim_{N\to \infty} \tcZ_k(N;-1,q),
	\end{align*}
	and hence arrive at \eqref{eq:S-m-(-1)-infty}.
\end{proof}

\section{Huang and Jiang's reflection formula}

Our last episode revolves around Huang and Jiang's reflection formula in Theorem \ref{th:HJ-transform}.

\begin{proof}[Proof of Theorem \ref{th:HJ-transform}]
	In view of \eqref{eq:Z-expression},
	\begin{align*}
		&\tcZ_k(N;t^{-1}q^{-N},q)\\
		&\qquad = \frac{1}{(t^{-1}q^{1-N};q)_N} \sum_{n_1,\ldots,n_k\ge 0} t^{-\sum_{i=1}^k 2n_i} q^{\sum_{i=1}^k (n_i^2-2Nn_i)}\notag\\
		&\qquad\quad\times \frac{1}{(q;q)_{N-n_k}(q;q)_{n_k-n_{k-1}}\cdots (q;q)_{n_2-n_1}(q;q)_{n_1}(t^{-1}q^{1-N};q)_{n_1}}.
	\end{align*}
	Now we make the changes of variables for $1\le i\le k$:
	\begin{align*}
		n_i \mapsto N-n_{k+1-i}.
	\end{align*}
	Then,
	\begin{align*}
		&\tcZ_k(N;t^{-1}q^{-N},q)\\
		&\qquad = \frac{t^{-2kN}q^{-kN^2}}{(t^{-1}q^{1-N};q)_N} \sum_{n_1,\ldots,n_k\ge 0} t^{\sum_{i=1}^k 2n_i} q^{\sum_{i=1}^k n_i^2}\notag\\
		&\qquad\quad\times \frac{1}{(t^{-1}q^{1-N};q)_{N-n_k}(q;q)_{N-n_k}(q;q)_{n_k-n_{k-1}}\cdots (q;q)_{n_2-n_1}(q;q)_{n_1}},
	\end{align*}
	so that
	\begin{align*}
		&\frac{(1-t)^2 q^N (t^{2N} q^{N^2})^{k-1}}{(1-t q^N)^2} \tcZ_k(N;t^{-1}q^{-N},q)\\
		&\qquad = \frac{1}{(tq;q)_N^2} \sum_{n_1,\ldots,n_k\ge 0} \frac{(-1)^{n_k} t^{-n_k+\sum_{i=1}^k 2n_i} q^{-\binom{n_k}{2}+\sum_{i=1}^k n_i^2} (t;q)_{n_k}}{(q;q)_{N-n_k}(q;q)_{n_k-n_{k-1}}\cdots (q;q)_{n_2-n_1}(q;q)_{n_1}}.
	\end{align*}
	Hence, our task becomes to show that
	\begin{align}\label{eq:HJ-new}
		&\frac{1}{(tq;q)_N} \sum_{n_1,\ldots,n_k\ge 0} \frac{(-1)^{n_k} t^{-n_k+\sum_{i=1}^k 2n_i} q^{-\binom{n_k}{2}+\sum_{i=1}^k n_i^2} (t;q)_{n_k}}{(q;q)_{N-n_k}(q;q)_{n_k-n_{k-1}}\cdots (q;q)_{n_2-n_1}(q;q)_{n_1}}\notag\\
		&\qquad = \sum_{n_1,\ldots,n_k\ge 0} \frac{t^{\sum_{i=1}^k 2n_i} q^{\sum_{i=1}^k n_i^2}}{(q;q)_{N-n_k}(q;q)_{n_k-n_{k-1}}\cdots (q;q)_{n_2-n_1}(q;q)_{n_1}(tq;q)_{n_1}}.
	\end{align}
	For the left-hand side of \eqref{eq:HJ-new}, we single out the sum over $n_k$:
	\begin{align*}
		\LHS\eqref{eq:HJ-new}&= \sum_{n_1,\ldots,n_{k-1}\ge 0} \frac{t^{\sum_{i=1}^{k-1} 2n_i} q^{\sum_{i=1}^{k-1} n_i^2}}{(q;q)_{n_{k-1}-n_{k-2}}\cdots (q;q)_{n_2-n_1}(q;q)_{n_1}}\\
		&\quad\times \frac{1}{(tq;q)_N} \sum_{n_k\ge n_{k-1}} \frac{(-1)^{n_k} t^{n_k} q^{\binom{n_k+1}{2}} (t;q)_{n_k}}{(q;q)_{N-n_k} (q;q)_{n_{k}-n_{k-1}}}.
	\end{align*}
	Using \eqref{eq:Z-1=Z-2-full} with $(a,b,L,M) = (0,t,n_{k-1},N)$ to this sum over $n_{k}$ implies that
	\begin{align*}
		\LHS\eqref{eq:HJ-new}&= \sum_{n_k\ge 0} \frac{t^{n_k}q^{n_k^2}}{(q;q)_{N-n_k}(tq;q)_{n_k}}\notag\\
		&\quad\times \sum_{n_1,\ldots,n_{k-1}\ge 0} \frac{(-1)^{n_{k-1}} t^{-n_{k-1}+\sum_{i=1}^{k-1} 2n_i} q^{-\binom{n_{k-1}}{2}+\sum_{i=1}^{k-1} n_i^2} (t;q)_{n_{k-1}}}{(q;q)_{n_k-n_{k-1}}\cdots (q;q)_{n_2-n_1}(q;q)_{n_1}},
	\end{align*}
	which is exactly the right-hand side of \eqref{eq:old-new-middle} with $a=1$. Due to the equality between \eqref{eq:old-new-middle} and both sides of \eqref{eq:old=new}, the right-hand side of \eqref{eq:HJ-new} becomes the final output.
\end{proof}

\subsection*{Acknowledgements}

I would like to thank Yifeng Huang for introducing the conjectures in \cite{HJ2023} to me. This work was supported by the Austrian Science Fund (No.~10.55776/F1002).

\bibliographystyle{amsplain}

\end{document}